\theoremstyle{plain}                     
\newtheorem{teo}{Theorem}[section]       
\newtheorem{cor}[teo]{Corollary}        
\newtheorem{lem}[teo]{Lemma}      
\theoremstyle{definition}                
\newtheorem{defin}[teo]{Definition}
\newtheorem{oss}[teo]{Remark}
\newcommand{\ep}{\varepsilon}
\newcommand{\Oe}{\Omega_\ep}
\newcommand{\R}{\mathbb R}
\newcommand{\C}{\mathbb C}
\newcommand{\M}{\mathbb M}
\newcommand{\ac}{r_\C}
\newcommand{\bc}{R_\C}
\newcommand{\rk}{r_K}
\newcommand{\Rk}{R_K}
\newcommand{\B}[1]{B(#1)}
\newcommand{\A}[0]{\mathbb{A}}
\newcommand{\dist}{\mathrm{dist}}
\newcommand{\supp}{\mathrm{supp}}
\newcommand{\sym}{\mathrm{sym}}
\newcommand{\ms}{\mathbb M^{3\times 3}_{sym}}
\newcommand{\md}{\mathbb M^{3\times 3}_D}
\newcommand{\mthree}{\mathbb{M}^{3\times 3}}
\newcommand{\deb}{\rightharpoonup}
\newcommand{\intom}[1]{\int_{\Omega}{#1\, dx}}
\newcommand{\intome}[1]{\int_{\Omega_{\ep}}{#1\, dx}}
\newcommand{\intomm}[1]{\int_{\omega}{#1\, dx'}}
\newcommand{\intt}[1]{\int_{-\frac{1}{2}}^{\frac{1}{2}}{#1\, dx_3}}
\newcommand{\tep}[0]{\theta^{\ep}}
\newcommand{\etp}[0]{\eta^{\ep}}
\newcommand{\pep}[0]{\phi^{\ep}}
\newcommand{\zep}[0]{z^{\ep}}
\newcommand{\nep}[0]{\nabla_{\ep}}
\newcommand{\eepp}[0]{{\tilde{E}}^{\ep}}
\newcommand{\eep}[0]{E^{\ep}}
\newcommand{\vep}[0]{\varphi^{\ep}}
\newcommand{\bvep}[0]{({\varphi}^{\ep})'}
\newcommand{\bzep}[0]{({z}^{\ep})'}
\newcommand{\vepj}[0]{\varphi^{\ep_j}}
\newcommand{\twh}[0]{\widetilde{W}_{hard}}
\newcommand{\tr}[1]{\text{tr }#1}
\newcommand{\hye}[0]{\hat{y}^{\ep}}
\newcommand{\hpe}[0]{\hat{P}^{\ep}}
\newcommand{\hppe}[0]{\hat{p}^{\ep}}
\newcommand{\sep}[0]{S_{\ep}}
\newcommand{\tepp}[0]{\tep\Big(\frac{y^{\ep}_3}{\ep}\Big)}
\newcommand{\teppp}[0]{\dot{\tep}\Big(\frac{y^{\ep}_3}{\ep}\Big)}
\newcommand{\teppm}[0]{\tep\big(\frac{y^{\ep}_3}{\ep}\big)}
\newcommand{\tepppm}[0]{\dot{\tep}\big(\frac{y^{\ep}_3}{\ep}\big)}
\newcommand{\m}[1]{(#1)'}
\newcommand{\epjt}{\ep_{j}}
\newcommand{\pepjt}[0]{\phi^{\epjt}}
\newcommand{\zepjt}[0]{z^{\epjt}}
\newcommand{\vepjt}[0]{\varphi^{\epjt}}
\newcommand{\bzepjt}[0]{({z}^{\epjt})'}
\newcommand{\tepjt}[0]{\theta^{\epjt}}
\numberwithin{equation}{section}
\begin{document}
\title[Quasistatic evolution models for thin plates in finite plasticity]{Quasistatic evolution models for thin plates arising as low energy $\Gamma$-limits of finite plasticity} 
 
\author[E. Davoli]{Elisa Davoli} 

\address[E. Davoli]{Scuola Internazionale Superiore di Studi Avanzati, via Bonomea 265, 34136 Trieste (Italy); current address: Department of Mathematical Sciences, Carnegie-Mellon University, Pittsburgh, PA, USA}
\email{davoli@sissa.it; edavoli@andrew.cmu.edu}

\subjclass[2000]{74C05 (74G65, 74K20, 49J45)}
\keywords{Quasistatic evolution, rate-independent processes, finite plasticity, thin plates,  $\Gamma$-convergence.}

\maketitle
\begin{abstract}
In this paper we deduce by $\Gamma$-convergence some partially and fully linearized quasistatic evolution models for thin plates, in the framework of finite plasticity. Denoting by $\ep$ the thickness of the plate, we study the case where the scaling factor of the elasto-plastic energy is of order $\ep^{2\alpha-2}$, with $\alpha\geq 3$. We show that solutions to the three-dimensional quasistatic evolution problems converge, as the thickness of the plate tends to zero, to a quasistatic evolution associated to a suitable reduced model depending on $\alpha$.
\end{abstract}

\section{Introduction}
The subject of this paper is the rigorous derivation of quasistatic evolution models for nonlinearly elastic - finitely plastic plates. The problem of deriving lower dimensional models for thin structures has been intensively studied since the early 90's by means of a rigorous approach based on $\Gamma$-convergence \cite{A-B-P, L-R}. Starting from the seminal paper \cite{FJM}, this approach has led to establish a hierarchy of limit models for plates \cite{FJM, FJM2}, rods \cite{M-M3, M-M, S0, S1}, and shells \cite{F-J-M-M, L-M-P, L-M-P2}, in the stationary framework and in the context of nonlinear elasticity.
More recently, the $\Gamma$-convergence approach to dimension reduction has gained attention also in the evolutionary framework: in nonlinear elasticity \cite{AMM}, crack propagation \cite {B, FPZ}, linearized elastoplasticity \cite{DM, LM, LR}, and delamination problems \cite{MRT}. 

In this paper we justify via $\Gamma$-convergence some linearized quasistatic evolution models for a thin plate, whose elastic behaviour is nonlinear and whose plastic response is governed by finite plasticity with hardening. {{We remark that different schools in finite plasticity are still competing and a generally accepted model is still lacking (see e.g. \cite{Ber}). We shall adopt here a mathematical model introduced in \cite{CHM, Men, M}. }}We assume that the reference configuration of the plate is the set
$$\Omega_{\ep}:=\omega\times\big(-\tfrac{\ep}{2},\tfrac{\ep}{2}\big),$$
where $\omega$ is a domain in $\R^2$ and $\ep>0$ represents the thickness of the plate. {{Following the lines of  \cite{Lee} and \cite{Man},}} we consider deformations of the plate $\eta\in W^{1,2}(\Omega_{\ep};\R^3)$ satisfying the multiplicative decomposition
$$\nabla \eta(x)=F_{el}(x)F_{pl}(x)\quad\text{for a.e. }x\in\Oe,$$
where $F_{el}\in L^2(\Omega;\mthree)$ is the elastic strain, $F_{pl}\in L^2(\Omega_{\ep};SL(3))$ is the plastic strain and $SL(3):=\{F\in\mthree:\, \det F=1 \}$. { To guarantee coercivity in the plastic strain variable, we suppose to be in a hardening regime. More precisely,} the stored energy  associated to a deformation $\eta$ and to its elastic and plastic strains is expressed as follows:
\begin{eqnarray*}
\nonumber \cal{E}(\eta,F_{pl})&:=&\intome{W_{el}(\nabla \eta(x) F_{pl}^{-1}(x))}+\intome{W_{hard}(F_{pl}(x))}\\
&=&\intome{W_{el}(F_{el}(x))}+\intome{W_{hard}(F_{pl}(x))},
\end{eqnarray*}
where $W_{el}$ is a frame-indifferent elastic energy density satisfying the standard assumptions of nonlinear elasticity, and $W_{hard}$ describes hardening.
The plastic dissipation is given in terms of a dissipation distance $D:\mthree\times\mthree\to [0,+\infty]$, which is defined via a positively 1-homogeneous potential $H_D$ (see Section \ref{prel}).

We consider a subset $\gamma_d$ of $\partial\omega$ and for every $t\in [0,T]$ we prescribe on $\gamma_d\times \big(-\tfrac{\ep}{2},\tfrac{\ep}{2}\big)$ a boundary datum for the deformations, of the form 
$$\pep(t,x):=\Big(\begin{array}{c}x'\\ x_3\end{array}\Big)+\ep^{\alpha-1}\Big(\begin{array}{c}u^0(t,x')\\0\end{array}\Big)+\ep^{\alpha-2}\Big(\begin{array}{c}-x_3\nabla' v^0(t,x')\\v^0(t,x')\end{array}\Big)\quad\text{for every }x=(x',\ep x_3)\in\overline{\Omega}_{\ep},$$
where $\alpha\geq 3$, $u^0\in C^1([0,T];C^1(\overline{\omega};\R^2))$, $v^0\in C^1([0,T];C^2(\overline{\omega}))$ and $\nabla'$ denotes the gradient with respect to $x'$.

  As usual in dimension reduction, we perform a change of variable to state the problem on a fixed domain independent of $\ep$. We consider the set $\Omega:=\omega\times \big(-\tfrac 12,\tfrac 12\big)$ and the map $\psi^{\ep}:\overline{\Omega}\to\overline{\Omega}_{\ep}$, given by
 $$\psi^{\ep}(x):=(x',\ep x_3)\quad\text{for every }x=(x',x_3)\in\overline{\Omega}.$$ 
 To deal with the nonlinear structure of the energy, we follow the approach of \cite{FM}: we assume $\pep(t)$ to be a $C^1$ diffeomorphism on $\R^3$ and we write deformations $\eta\in W^{1,2}(\Omega_{\ep};\R^3)$ as
 $$\eta \circ \psi^{\ep}=\pep(t) \circ z,$$
 where $z\in W^{1,2}(\Omega;\R^3)$ satisfies 
 $$z(x)=\psi^{\ep}(x)=(x',\ep x_3)\quad\cal{H}^2\text{ - a.e. on }\gamma_d\times \big(-\tfrac 12,\tfrac 12\big).$$
 To any plastic strain $F_{pl}\in L^2(\Omega_{\ep};SL(3))$ we associate a scaled plastic strain $P\in L^2(\Omega;SL(3))$ defined as
 $$P:=F_{pl}\circ \psi^{\ep}$$
 and we rewrite the stored energy as 
 $$\cal{F}_{\ep}(t,z,P):=\intom{W_{el}(\nabla \pep(t,z(x))\nep z(x))}+\intom{W_{hard}(P(x))}=\frac{1}{\ep}\cal{E}(\eta,F_{pl}),$$
 where $\nep z:=(\nabla' z|\tfrac{1}{\ep}\partial_3 z)$.
 
 In this setting, according to the variational theory for rate-independent processes developed in \cite{MM}, a quasistatic evolution for the boundary datum $\pep$ is a function $t\mapsto(z(t),P(t))\in W^{1,2}(\Omega;\R^3)\times L^2(\Omega;SL(3))$ such that for every $t\in [0,T]$ the following two conditions are satisfied: 
 \begin{enumerate}
	\item[(qs1)] \emph{global stability:} there holds
	$$z(t)=\psi^{\ep}\quad\cal{H}^2\text{ - a.e. on }\gamma_d\times \big(-\tfrac 12,\tfrac 12\big)$$ and $(z(t), P(t))$ minimizes
	\begin{eqnarray}
\nonumber \cal{F}_{\ep}(t,\tilde{z},\tilde{P})+{\ep^{\alpha-1}}\intom{D(P(t),\tilde{P})},
\end{eqnarray}
among all $(\tilde{z},\tilde{P})\in W^{1,2}(\Omega;\R^3)\times L^2(\Omega;SL(3))$ such that $\tilde{z}=\psi^{\ep}$ $\cal{H}^2$ - a.e. on $\gamma_d\times \big(-\tfrac 12,\tfrac 12\big)$;
	\item[(qs2)] \emph{energy balance:}
	\begin{eqnarray}
\nonumber &&\cal{F}_{\ep}(t,z(t),P(t))+{\ep^{\alpha-1}}\cal{D}(P;0,t)\\
\nonumber &&=\cal{F}_{\ep}(0,z(0),P(0))+{\ep^{\alpha-1}}\int_0^t{\intom{E^{\ep}(s):\Big(\nabla \dot{\pep}(s,z(s))(\nabla \pep)^{-1}(s,z(s))\Big)}\,ds}.
\end{eqnarray}
\end{enumerate}
In the previous formula, $\cal{D}(P;0,t)$ is the plastic dissipation in the interval $[0,t]$ (see Section \ref{quas}), $E^{\ep}(t)$ is the stress tensor, defined as
$$E^{\ep}(t):=\frac{1}{\ep^{\alpha-1}}DW_{el}\big(\nabla \pep(t,z(t))\nep z(t)(P)^{-1}(t)\big)\big(\nabla \pep(t,z(t))\nep z(t)(P)^{-1}(t)\big)^T,$$
and $\alpha\geq 3$ is the same exponent as in the expression of the boundary datum.

Our main result is the characterization of the asymptotic behaviour of quasistatic evolutions as $\ep\to 0$. More precisely, in Theorem \ref{cvstress} and Corollaries \ref{sp} and \ref{su} we show that, given a sequence of initial data $(\zep_0,P^{\ep}_0)$ which is compact in a suitable sense, if $t\mapsto (\zep(t),P^{\ep}(t))$ is a quasistatic evolution for the boundary datum $\pep$ (according to (qs1)--(qs2)), satisfying $\zep(0)=\zep_0$ and $P^{\ep}(0)=P^{\ep}_0$, then defining the in-plane displacement
$$u^{\ep}(t):=\frac{1}{\ep^{\alpha-1}}\intt{\Big(\Big(\begin{array}{c}\pep_1(t,\zep(t))\\\pep_2(t,\zep(t))\end{array}\Big)-x'\Big)},$$
the out-of-plane displacement
$$v^{\ep}(t):=\frac{1}{\ep^{\alpha-2}}\intt{\pep_3(t,\zep(t))}$$
and the scaled linearized plastic strain
$$p^{\ep}(t):=\frac{P^{\ep}(t)-Id}{\ep^{\alpha-1}},$$
for every $t\in [0,T]$ we have
\begin{equation}
\nonumber
 p^{\ep}(t)\to p(t)\quad\text{strongly in }L^2(\Omega;\mthree),
 \end{equation}
 where $p(t)\in L^2(\Omega;\mthree)$ with $\tr{p(t)}=0$ a.e. in $\Omega$. If $\alpha>3$ there hold
\begin{eqnarray}
&&\label{1ci}u^{\ep}(t)\to u(t)\quad\text{strongly in }W^{1,2}(\omega;\R^2),\\
&&\label{2ci}v^{\ep}(t)\to v(t)\quad\text{strongly in }W^{1,2}(\omega),
\end{eqnarray}
for every $t\in [0,T]$, where $u(t)\in W^{1,2}(\omega;\R^2)$ and $v(t)\in W^{2,2}(\omega)$. If $\alpha=3$, the convergence of the in-plane and the out-of-plane displacements holds only on a $t$-dependent subsequence. Moreover, $t\mapsto(u(t),v(t),p(t))$ is a solution of the following reduced quasistatic evolution problem: for every $t\in [0,T]$ 
\begin{enumerate}
\item[(qs1)$_{r\alpha}$]\emph{reduced global stability:} 
$$u(t)=u^0(t),\quad v(t)=v^0(t),\quad \nabla' v(t)=\nabla' v^0(t)\quad\cal{H}^1\text { - a.e. on }\gamma_d$$
 and $(u(t), v(t), p(t))$ minimizes
 $$\intom{Q_2\big(\sym \nabla' \tilde{u}-x_3(\nabla')^2 \tilde{v}+\tfrac {L_{\alpha}}{2} \nabla' \tilde{v}\otimes \nabla' \tilde{v}-\tilde{p}'\big)}+\intom{\B{\tilde{p}}}+\intom{H_D(\tilde{p}-p(t))}$$
among all triples $(\tilde{u},\tilde{v},\tilde{p})\in W^{1,2}(\omega;\R^2)\times W^{2,2}(\omega)\times L^2(\Omega;\mthree)$, such that $\tr{\tilde{p}}=0$ a.e. in $\Omega$, and
$$\tilde{u}=u^0(t),\quad \tilde{v}=v^0(t)\quad\text{and}\quad \nabla' \tilde{v}=\nabla' v^0(t)\quad\cal{H}^1\text{ - a.e. on }\gamma_d;$$
\item[(qs2)$_{r\alpha}$]\emph{reduced energy balance:}
\begin{eqnarray*}
&&\intom{Q_2(e_{\alpha}(t))}+\intom{\B{p(t)}}+\cal{D}_{H_D}(p;0,t)=\intom{Q_2(e_{\alpha}(0))}+\intom{\B{p(0)}}\\
&&+\int_0^t{\intom{\C_2e_{\alpha}(s):\Big(\begin{array}{cc}\nabla' \dot{u}^0(s)+L_{\alpha}\nabla' \dot{v}^0(s)\otimes\nabla' v(s)-x_3(\nabla')^2 \dot{v}^0(s)&0\\0&0\end{array}\Big)}\,ds}
\end{eqnarray*}
\end{enumerate}
where
$$e_{\alpha}(t):=\sym \nabla' u(t)-x_3(\nabla')^2 v(t)+\tfrac{L_{\alpha}} {2} \nabla' v(t)\otimes \nabla' v(t)-p'(t)\quad\text{for every }t\in [0,T]$$
and $$L_{\alpha}:=\begin{cases}0&\text{if }\alpha>3,\\
1&\text{if }\alpha=3.\end{cases}$$
	In the above formulas, $\tilde{p}'$ and $p'(t)$ are the $2\times 2$ minors of $\tilde{p}$ and $p(t)$ given by the first two rows and columns, $\nabla'$ denotes the gradient with respect to $x'$, $Q_2$ and $B$ are two symmetric, positive definite quadratic forms on $\M^{2\times 2}$ and $\mthree$, respectively, for which an explicit formula is provided (see Sections \ref{prel} and \ref{comp}), $\C_2$ is the tensor associated to $Q_2$ and $\cal{D}_{H_D}$ is the plastic dissipation in the interval $[0,t]$ for the reduced model (see \eqref{NUMpag13}). \\
	
	We remark that Theorem \ref{cvstress} is only a convergence result. In fact, the issue of the existence of a quasistatic evolution in finite plasticity according to (qs1)--(qs2), is quite delicate, and it has only recently been solved in \cite{MM09} by adding to the stored-energy functional some further regularizing terms in the plastic component. We shall not add these further terms here, we rather show, in the last section, that our convergence result can be extended to sequences of approximate discrete-time quasistatic evolutions, whose existence is always guaranteed (see Theorem \ref{cvapp}). The limit quasistatic evolution problem identified in (qs1)$_{r\alpha}$--(qs2)$_{r\alpha}$, on the other hand, has always a solution (see Remark \ref{csd}).\\

The constant $L_{\alpha}$ in the limit problem encodes the main differences between the cases $\alpha>3$ and $\alpha=3$. Indeed, for $\alpha=3$, the limit energy contains the nonlinear term $\tfrac12 \nabla' v\otimes \nabla' v$, which is related to the stretching due to the out-of-plane displacement. For $\alpha>3$ the limit problem is completely linearized and, in the absence of hardening, coincides with that identified in \cite{DM} starting from three-dimensional linearized plasticity. However, we point out that the role of the hardening term in the present formulation is fundamental to deduce compactness of the three-dimensional evolutions (see Step 1, Proof of Theorem \ref{cvstress}). 
	
The limit stored energy and the limit plastic dissipation potential have both been deduced in the static case by $\Gamma$-convergence arguments. Indeed, in the absence of plastic deformations $(p=0)$ the stored energy reduces to the Von K\'arm\'an functional for $\alpha=3$ and to the linear plate functional for $\alpha>3$, which have been rigorously justified via $\Gamma$-convergence in \cite{FJM2} as low energy limits of three-dimensional nonlinear elasticity. In the case where plastic deformation is allowed, the energy in (qs1)$_{r\alpha}$ has been obtained in \cite{D1} as $\Gamma$-limit of a suitable scaling of the three-dimensional energy in (qs1). Our particular choice of the boundary datum and the scaling of the displacements are motivated by these results.\\

The setting of the problem and some arguments in the proofs are close to those of \cite{MS}. In particular, the proof of Theorem \ref{cvstress}  follows along the general lines of \cite{MRS}, where an abstract criterion for convergence of quasistatic evolutions is provided. 

A major difficulty in the proof of the reduced energy balance is related to the compactness of the stress tensors $E^{\ep}(t)$. In fact, due to the physical growth assumptions on $W_{el}$, weak $L^2$ compactness of $E^{\ep}(t)$ is in general not guaranteed. However, the sequence of stress tensors satisfies the following properties: there exists a sequence of sets $O_{\ep}(t)$, which converges in measure to $\Omega$, such that on $O_{\ep}(t)$ the stresses $E^{\ep}(t)$ are weakly compact in $L^2$, while in the complement of $O_{\ep}(t)$ their contribution is negligible in the $L^1$ norm. This mixed-type convergence is enough to pass to the limit in the three-dimensional energy balance. This argument of proof is similar to the one used in \cite{M-S} by Mora and Scardia, to prove convergence of critical points for thin plates under physical growth conditions for the energy density.

A further difficulty arises because of the physical growth conditions on $W_{el}$: the global stability (qs1) does not secure that $\zep(t)$ fulfills the usual Euler-Lagrange equations. This is crucial to identify the limit stress tensor. This issue is overcome by proving that $\zep(t)$ satisfies the analogue of an alternative first order condition introduced by Ball in \cite[Theorem 2.4]{B} in the context of nonlinear elasticity, and by adapting some techniques in \cite{M-S}. 

Finally, to obtain the reduced global stability condition, we need an approximation result for triples $(u,v,p)\in W^{1,2}(\omega;\R^2)\times W^{2,2}(\omega)\times L^2(\Omega;\mthree)$ such that 
\begin{equation}
\label{diri}u=0,\quad v=0,\quad\nabla' v=0\quad\cal{H}^1\text{ - a.e. on }\gamma_d
\end{equation}
in terms of smooth triples. {Arguing as in \cite[Section 3]{DM}, such a density result is proved under additional regularity assumptions on $\partial \omega$ and on $\gamma_d$ (see Lemma \ref{bdc}).\\}
		
	The paper is organized as follows: in Section \ref{prel} we set the static problem and we describe the limit functional. In Section \ref{comp} {we recall the compactness results proved in \cite{D1} and we prove an approximation result for triples $(u,v,p)$ satisfying \eqref{diri}.} Section \ref{quas} concerns the formulation of the quasistatic evolution problems, the statement of the main results of the paper and the construction of the mutual recovery sequence, whereas Section \ref{pquas} is entirely devoted to the proofs of the convergence of quasistatic evolutions. Finally, in Section \ref{appr} we discuss convergence of approximate discrete-time quasistatic evolutions. 

\smallskip

\noindent{\bf{Notation}}
We shall write any point $x\in\R^3$ as a pair $(x',x_3)$, where $x'\in \R^2$ and $x_3\in\R$.
 We shall use the following notation: given $\varphi:\Omega\to \R^3$, we denote by $\varphi':\Omega\to \R^2$ the map 
$$\varphi':=\Big(\begin{array}{c}\varphi_1\\\varphi_2\end{array}\Big)$$
and for every $\eta\in W^{1,2}(\Omega)$ we denote by $\nabla'\eta$ the vector $\Big(\begin{array}{c}\partial_1 \eta\\\partial_2\eta\end{array}\Big)$.
Analogously, given a matrix $M\in \mthree$, we use the notation $M'$ to represent the minor
$$M':=\Big(\begin{array}{cc}M_{11}&M_{12}\\M_{21}&M_{22}\end{array}\Big).$$

\section{Preliminaries and setting of the problem}
\label{prel}
Let $\omega\subset \R^2$ be a connected, bounded open set with { $C^2$} boundary. Let $\ep>0$. We assume that the set $\Omega_{\ep}:=\omega\times\big(-\tfrac \ep2,\tfrac \ep2\big)$ is the reference configuration of a finite-strain elastoplastic plate, and every deformation $\eta\in W^{1,2}(\Oe;\R^3)$ fulfills the multiplicative decomposition
$$\nabla \eta(x)=F_{el}(x)F_{pl}(x)\quad\text{for a.e. }x\in\Oe,$$
where $F_{el}\in L^2(\Oe;\mthree)$ represents the elastic strain, $F_{pl}\in L^2(\Oe;SL(3))$ is the plastic strain and $SL(3):=\{F\in\mthree: \det F=1\}.$ The stored energy (per unit thickness) associated to a deformation $\eta$ and to its elastic and plastic strains can be expressed as follows:
\begin{eqnarray}
\nonumber \cal{E}(\eta,F_{pl})&:=&\intome{W_{el}(\nabla \eta(x) F_{pl}^{-1}(x))}+\intome{W_{hard}(F_{pl}(x))},\\
\label{nsenergy}&=&\intome{W_{el}(F_{el}(x))}+\intome{W_{hard}(F_{pl}(x))}
\end{eqnarray}
where $W_{el}$ is the elastic energy density and $W_{hard}$ describes hardening.\\
{\bf Properties of the elastic energy}\\
We assume that $W_{el}:\mthree\to [0,+\infty]$ satisfies
\begin{itemize}
\item[(H1)] $W_{el}\in C^1(\mthree_+),\quad W_{el}\equiv +\infty \text{ on }\mthree\setminus \mthree_+$,
\item [(H2)] $W_{el}(Id)=0,$
\item  [(H3)] $W_{el}(RF)=W_{el}(F)\quad\text{for every }R\in SO(3),\, F\in \mthree_+,$
\item [(H4)] $W_{el}(F)\geq c_1 \dist^2(F;SO(3))\quad\text{for every }F\in \mthree_+,$
\item [(H5)] $|F^T DW_{el}(F)|\leq c_2 (W_{el}(F)+1) \quad\text{for every }F\in \mthree_+.$
\end{itemize}
Here $c_1,c_2$ are positive constants, $\mthree_+:=\{F\in\mthree:\, \det F>0\}$ and $SO(3):=\{F\in\mthree_+:\, F^T F=Id\}$.
We also assume that there exists a symmetric, positive semi-definite tensor $\C:\mthree\to \ms$ such that, setting
$$Q(F):=\frac{1}{2}\C F:F\quad\text{for every }F\in\mthree,$$
the quadratic form $Q$ encodes the local behaviour of $W_{el}$ around the identity, namely
\begin{equation}
\label{quadrwel}
 \forall \delta>0\,\, \exists c_{el}(\delta)>0 \text{ such that }\forall F\in B_{c_{el}(\delta)}(0)\text{ there holds }|W_{el}(Id+F)-Q(F)|\leq \delta |F|^2.
\end{equation}
\begin{oss}
By \cite[Proposition 1.5]{DL} and by (H3) and (H5), there holds
\begin{equation}
\label{mandel2}
|DW_{el}(F)F^T|\leq c_3(W_{el}(F)+1)\quad\text{for every }F\in\mthree_+,
\end{equation}
where $c_3$ is a positive constant.
Moreover, by (H1) and (H5), there exist  $ c_4,\, c_5,\,\gamma > 0$ such that, for every $G_1,\, G_2\in B_{\gamma}(Id)$ and for every $F \in \mthree_+$ the following estimate holds true
 \begin{equation}
 \label{lemmams}
  |W_{el}(G_1FG_2) - W_{el}(F)| \leq c_4(W_{el}(F) + c_5)(|G_1-Id| + |G_2-Id|)
  \end{equation}
 (see \cite[Lemma 4.1]{MS}).\\
\end{oss}
\begin{oss}
As remarked in \cite[Section 2]{MS}, the frame-indifference condition (H3) yields
$$\C_{ijkl}=\C_{jikl}=\C_{ijlk}\text{ for every }i,j,k,l\in\{1,2,3\}$$
and
$$\C F=\C\, (\sym\, F) \quad\text{for every }F\in\mthree.$$
Hence, the quadratic form $Q$ satisfies:
$$Q(F)=Q(\sym\,F)\quad\text{for every }F\in\mthree$$
and by (H4) it is positive definite on symmetric matrices. This, in turn, implies that there exist two constants $\ac$ and $\bc$ such that 
\begin{equation}
\label{growthcondQ}
\ac |F|^2\leq Q(F)\leq \bc |F|^2\quad\text{for every }F\in\mthree_{\sym},
\end{equation}
and
\begin{equation}
\label{growthcondC}
|\C F|\leq 2\bc |F| \quad\text{for every }F\in\mthree_{\sym}.
\end{equation}
\end{oss}
\begin{oss}
We note that \eqref{quadrwel} entails, in particular,
$$W_{el}(Id)=0,\quad DW_{el}(Id)=0$$
and
$$\C =D^2W_{el}(Id),\quad \C_{ijkl}=\frac{\partial^2 W}{\partial F_{ij}\partial F_{kl}}(Id)\text{ for every }i,j,k,l\in\{1,2,3\}.$$
By combining \eqref{quadrwel} with \eqref{growthcondC} we deduce also that there exists a constant $c_{el_2}$ such that
\begin{equation}
\label{locquad} 
|DW_{el}(Id+F)|\leq (2\bc+1)|F|
\end{equation}
for every $F\in\mthree$, $|F|<c_{el_2}$.
\end{oss}
\noindent{\bf Properties of the hardening functional}\\
We assume that the hardening map $W_{hard}:\mthree\to [0,+\infty]$ is of the form
\begin{equation}
\nonumber
W_{hard}(F):=\begin{cases}\twh(F)&\text{for every }F\in K,\\
+\infty&\text{otherwise}.
\end{cases}
\end{equation}
Here $K$ is a compact set in $SL(3)$ that contains the identity as a relative interior point, and the map $\twh:\mthree\to [0,+\infty)$ fulfills
\begin{eqnarray}
\nonumber &&\twh\text{ is locally Lipschitz continuous},\\
\label{prh3} && \twh(Id+F)\geq c_6 |F|^2\quad\text{for every }F\in\mthree,
\end{eqnarray}
where $c_6$ is a positive constant. 
We also assume that there exists a symmetric, positive definite tensor $\mathbb{B}:\mthree\to\mthree$ such that, setting
$$B(F):=\frac{1}{2}\mathbb{B}F:F\quad\text{ for every }F\in\mthree,$$
the quadratic form $B$ satisfies
\begin{eqnarray}
\nonumber && \forall \delta>0\, \exists c_h(\delta)>0 \text{ such that }\forall F\in B_{c_h(\delta)}(0)\text{ there holds }|\twh(Id+F)-\B{F}|\leq \delta \B{F}.\\
 \label{prh4}
\end{eqnarray} 
In particular, by the hypotheses on $K$ there exists a constant $c_k$ such that
\begin{eqnarray}
\label{prk1}&& |F|+|F^{-1}|\leq c_k\quad\text{for every }F\in K,\\
\label{prk2}&& |F-Id|\geq \frac{1}{c_k}\quad\text{for every }F\in SL(3)\setminus K.
\end{eqnarray}
Combining \eqref{prh3} and \eqref{prh4} we deduce also
\begin{equation}
\label{grbelowh}
\frac{c_6}{2} |F|^2\leq \B{F}\quad\text{for every }F\in\mthree.
\end{equation}
{\bf Dissipation functional}\\
Denote by $\md$ the set of symmetric trace-free matrices, namely
$$\md:=\{F\in\mthree_{\sym}: \tr F=0\}.$$
Let $H_{D}:\md \to [0,+\infty)$ be a convex, positively one-homogeneous function such that
\begin{equation}
\label{growthh}
\rk |F|\leq H_{D}(F)\leq \Rk |F|\quad\text{for every }F\in\md.
\end{equation}
We define the dissipation potential $H:\mthree\to [0,+\infty]$ as
\begin{equation}
\nonumber
H(F):=\begin{cases}H_{D}(F)&\text{if }F\in \md,\\
+\infty &\text{otherwise.}\end{cases}
\end{equation}
For every $F\in\mthree$ consider the quantity
\begin{equation}
\label{distid}
D(Id,F):=\inf \Big\{\int_0^1{H(\dot{c}(t)c^{-1}(t))\,dt}: c\in C^1([0,1];\mthree_+),\, c(0)=Id,\, c(1)=F \Big\}.
\end{equation}
Note that, by the Jacobi's formula for the derivative of the determinant of a differentiable matrix-valued map, if $D(Id, F)<+\infty$, then $F\in SL(3)$.

We define the dissipation distance as the map $D:\mthree\times \mthree\to [0,+\infty]$, given by 
$$D(F_1,F_2):=\begin{cases}D(Id, {F_2}F_1^{-1})& \text{if }F_1\in\mthree_{+}, F_2\in\mthree\\ +\infty& \text{if }F_1\notin \mthree_{+}, F_2\in\mthree. 
\end{cases}$$
We note that the map $D$ satisfies the triangle inequality
\begin{equation}
\label{triang}
D(F_1,F_2)\leq D(F_1,F_3)+D(F_3,F_2)
\end{equation}
for every $F_1,F_2, F_3\in\mthree$. 

Given a preexistent plastic strain $F_{pl}^0\in L^2(\Omega_{\ep};SL(3))$, we define the plastic dissipation potential associated to a plastic configuration $F\in L^2(\Omega_{\ep};SL(3))$ as
\begin{equation}
\label{dissord}
\ep^{\alpha-1}\intome{D(F_{pl}^0;F)},
\end{equation}
where $\alpha\geq 3$ is a given parameter.
\begin{oss}
 We remark that there exists a positive constant $c_7$ such that
\begin{eqnarray}
\label{prd1} &&D(F_1,F_2)\leq c_7\quad\text{for every }F_1,F_2\in K,\\
\label{prd2} && D(Id,F)\leq c_7|F-Id|\quad\text{for every }F\in K.
\end{eqnarray}
Indeed, by the compactness of $K$ and the continuity of the map $D$ on $SL(3)\times SL(3)$ (see \cite{M}), there exists a constant $\tilde{c}_7$ such that 
\begin{equation}
\label{quasiprd1}
D(F_1,F_2)\leq \tilde{c}_7\quad\text{for every }F_1,F_2\in K.
\end{equation} 
By the previous estimate, \eqref{prd2} needs only to be proved in a neighbourhood of the identity. More precisely, let $\delta>0$ be such that $\log F$ is well defined for $F\in K$ and $|F-Id|<\delta$. If $F\in K$ is such that $|F-Id|\geq \delta$, by \eqref{quasiprd1} we have
$$D(Id,F)\leq \frac{\tilde{c}_7}{\delta}|F-Id|.$$
If $|F-Id|<\delta$, taking $c(t)=\exp({t\log F})$ in \eqref{distid}, inequality \eqref{growthh} yields
$$D(Id,F)\leq H_{D}(\log F)\leq \Rk |\log F|\leq C|F-Id|$$
for every $F\in K$. Collecting the previous estimates we deduce \eqref{prd1} and \eqref{prd2}.
\end{oss}
\subsection{Change of variable and formulation of the problem}
{
 We suppose that the boundary $\partial\omega$ is partitioned into two disjoint open subsets $\gamma_d$ and $\gamma_n$, and their common boundary $\partial\lfloor_{\partial\omega}\gamma_d = \partial\lfloor_{\partial\omega}\gamma_n$(topological notions refer here to the relative topology of $\partial\omega$). We assume that $\gamma_d$ is nonempty and that $\partial\lfloor_{\partial\omega}\gamma_d= \{P_1, P_2\}$, where $P_1, P_2$ are two points in $\partial\omega$. We denote by $\Gamma_{\ep}$ the portion of the lateral surface of the plate given by $\Gamma_{\ep}:=\gamma_d\times\big(-\tfrac \ep2,\tfrac \ep2\big)$. On $\Gamma_{\ep}$ we prescribe a boundary datum of the form
\begin{equation}
\label{defbddat}
\phi^{\ep}(x):=\Big(\begin{array}{c}x'\\x_3\end{array}\Big)+\Big(\begin{array}{c}\ep^{\alpha-1}u^0(x')\\0\end{array}\Big)+\ep^{\alpha-2}\Big(\begin{array}{c}-x_3\nabla' v^0(x')\\v^0(x')\end{array}\Big)\end{equation}
for every $x=(x',\ep x_3)\in\Omega_{\ep}$, where $u^0\in C^{1}(\overline{\omega};\R^2)$, $v^0\in C^{2}(\overline{\omega})$ and $\alpha\geq 3$ is the same parameter as in \eqref{dissord}.

We consider deformations $\eta\in W^{1,2}(\Omega_{\ep};\R^3)$ satisfying \begin{equation}
\label{bddatunsc}
\eta=\pep\quad\cal{H}^2\text{ - a.e. on }\Gamma_{\ep}.
\end{equation}}
As usual in dimension reduction, we perform a change of variable to formulate the problem on a domain independent of $\ep$. We consider the set $\Omega:=\omega \times \big(-\tfrac 12, \tfrac 12\big)$ and the map $\psi^{\ep}:\overline{\Omega}\to \overline{\Omega}_{\ep}$ given by 
\begin{equation}
\label{cov}
\psi^{\ep}(x):=(x',\ep x_3)\quad\text{for every }x\in\overline{\Omega}.\end{equation}
To every deformation $\eta \in W^{1,2}(\Oe;\R^3)$ satisfying 
\eqref{bddatunsc}
 and to every plastic strain $F_{pl}\in L^2(\Oe;SL(3))$, we associate the scaled deformation $y:=\eta\circ \psi^{\ep}$ and the scaled plastic strain $P:=F_{pl}\circ \psi^{\ep}$. Denoting by $\Gamma_d$ the set $\gamma_d\times \big(-\tfrac 12, \tfrac 12\big),$ the scaled deformation satisfies the boundary condition 
{\begin{equation}
\label{bddatum}
y=\phi^{\ep}\circ \psi^{\ep}\quad\cal{H}^2\text{ - a.e. on }\Gamma_d. 
\end{equation}}

Denote by $\cal{A}_{\ep}(\phi^{\ep})$ the class of pairs $(y^{\ep},P^{\ep})\in W^{1,2}(\Omega;\R^3)\times L^2(\Omega;SL(3))$ such that \eqref{bddatum} is satisfied.
Applying the change of variable \eqref{cov} to \eqref{nsenergy} and \eqref{dissord}, the energy functional is now given by
\begin{equation}
\label{encov}
\cal{I}(y,P):=\frac{1}{\ep}\cal{E}(\eta,F_{pl})=\intom{W_{el}(\nep y(x) P^{-1}(x))}+\intom{W_{hard}(P(x))},
\end{equation}
where $\nep y(x):=\big(\partial_1 y(x)\big|\partial_2 y(x)\big|\frac{1}{\ep} \partial_3 y(x)\big)$ for a.e. $x\in\Omega$. The plastic dissipation potential is given by 
\begin{equation}
\label{disscov}
\ep^{\alpha-1}\intom{D(P^{\ep,0},P)}
\end{equation}
where $P^{\ep,0}:=F_{pl}^0\circ \psi^{\ep}$ is a preexistent plastic strain. We remark here that the asymptotic behaviour of sequences of pairs $(y^{\ep}, P^{\ep})\in\cal{A}_{\ep}(\pep)$ such that 
$$\cal{I}(y^{\ep},P^{\ep})+\ep^{\alpha-1}\intom{D(P^{\ep,0},P^{\ep})}$$
is of order $\ep^{2\alpha-2}$ has been studied in \cite{D1} under suitable assumptions on the maps $P^{\ep,0}$.\\

\section{Compactness results}
\label{comp}
In this section we collect two compactness results that were {obtained} in \cite{D1} { and we state an approximation result which will be crucial in the proof of the reduced global stability condition}. In the first theorem, the rigidity estimate proved by Friesecke, James and M\"uller in \cite[Theorem 3.1]{FJM} allow us to 
approximate sequences of deformations whose distance of the gradient from $SO(3)$ is uniformly bounded, by means of rotations (see \cite[Theorem 3.3]{D1}). 
\begin{teo}
\label{compactbd1}
Assume that $\alpha\geq 3$. Let $(y^{\ep})$ be a sequence of deformations in $W^{1,2}(\Omega;\R^3)$ satisfying \eqref{bddatum} and such that
\begin{equation}
\label{elasticbd}
\|\dist(\nep y^{\ep}, SO(3))\|_{L^2(\Omega; \mthree)}\leq C\ep^{\alpha-1}.
\end{equation}
Then, there exists a sequence $(R^{\ep})\subset W^{1,\infty}(\omega; \mthree)$ such that for every $\ep>0$
\begin{eqnarray}
&&\label{rt1} R^{\ep}(x')\in SO(3)\quad\text{for every }x'\in \omega,\\
&&\label{rt2} \|\nep y^{\ep}-R^{\ep}\|_{L^2(\Omega;\mthree)}\leq C\ep^{\alpha-1},\\
&&\label{rt3} \|\partial_i R^{\ep}\|_{L^2(\omega;\mthree)}\leq C\ep^{\alpha-2},\,i=1,2\\
&&\label{rt4} \|R^{\ep}-Id\|_{L^2(\omega;\mthree)}\leq C\ep^{\alpha-2}.
\end{eqnarray} 
\end{teo}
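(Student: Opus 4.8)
\emph{Plan.} This is the classical geometric–rigidity compactness lemma for plates, adapted to the present scaling; I would follow the Friesecke--James--M\"uller scheme \cite{FJM,FJM2} (as carried out in \cite[Theorem 3.3]{D1}): localize the rigidity estimate on an $\ep$-grid, patch the local rotations into a smooth $SO(3)$-valued field, and then use the Dirichlet datum on $\Gamma_d$ to anchor that field near the identity.

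First I would pass to the thin domain: setting $\tilde y^\ep(x',x_3):=y^\ep(x',x_3/\ep)$ on $\Oe$ one has $\nabla\tilde y^\ep=(\nep y^\ep)\circ(\psi^\ep)^{-1}$, so a change of variables turns \eqref{elasticbd} into $\|\dist(\nabla\tilde y^\ep,SO(3))\|_{L^2(\Oe)}\le C\ep^{\alpha-1/2}$. Next, cover $\omega$ by squares $S$ of side $\ep$ and apply \cite[Theorem 3.1]{FJM} on each cube $Q_S:=S\times(-\ep/2,\ep/2)$; by scaling invariance the constant is uniform, giving $\hat R_S\in SO(3)$ with $\|\nabla\tilde y^\ep-\hat R_S\|_{L^2(Q_S)}\le C\|\dist(\nabla\tilde y^\ep,SO(3))\|_{L^2(\hat Q_S)}$ on a fixed dilate $\hat Q_S$, and hence $|\hat R_S-\hat R_{S'}|\le C\ep^{-3/2}\|\dist(\nabla\tilde y^\ep,SO(3))\|_{L^2(\hat Q_S\cup \hat Q_{S'})}$ for adjacent squares. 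I would then mollify the piecewise-constant field $x'\mapsto\hat R_S$ at scale $\ep$ and project the result onto $SO(3)$ (legitimate, since the mollification lies at distance $o(1)$ from $SO(3)$); the outcome $R^\ep\in C^\infty(\omega;\mthree)\subset W^{1,\infty}$ gives \eqref{rt1}, summing the two displayed estimates over the grid and undoing the rescaling gives \eqref{rt2}, and the pointwise bound $|\nabla R^\ep|\lesssim\ep^{-1}\max_{S'\sim S}|\hat R_S-\hat R_{S'}|$ on $S$ gives, after summation, $\|\nabla R^\ep\|_{L^2(\omega)}\le C\ep^{\alpha-2}$, i.e. \eqref{rt3}.

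For \eqref{rt4} I would argue as follows. Poincar\'e on the connected set $\omega$ together with \eqref{rt3} gives $\|R^\ep-\bar R^\ep\|_{L^2(\omega)}\le C\ep^{\alpha-2}$ with $\bar R^\ep:=\fint_\omega R^\ep$; combining this with \eqref{rt2} yields $\|\nep y^\ep-\bar R^\ep\|_{L^2(\Omega)}\le C\ep^{\alpha-2}$, whence (the first two columns control $\nabla'y^\ep$, and the third, multiplied by $\ep$, controls $\partial_3 y^\ep$) $\|y^\ep-\bar R^\ep\psi^\ep-b^\ep\|_{W^{1,2}(\Omega)}\le C\ep^{\alpha-2}$ for $b^\ep:=\fint_\Omega(y^\ep-\bar R^\ep\psi^\ep)$. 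Taking traces on $\Gamma_d$, using $y^\ep=\phi^\ep\circ\psi^\ep$ there and $\|\phi^\ep\circ\psi^\ep-\psi^\ep\|_{L^\infty(\Omega)}\le C\ep^{\alpha-2}$, I obtain $\|(Id-\bar R^\ep)\psi^\ep-b^\ep\|_{L^2(\Gamma_d)}\le C\ep^{\alpha-2}$; integrating over $x_3\in(-1/2,1/2)$ and recalling $\psi^\ep(x',x_3)=(x',\ep x_3)$, this reduces to $\|(Id-\bar R^\ep)[e_1|e_2]\,x'-b^\ep\|_{L^2(\gamma_d)}\le C\ep^{\alpha-2}$, and the nondegeneracy of the arc $\gamma_d$ (here the regularity of $\partial\omega$ enters) forces $|\bar R^\ep e_1-e_1|+|\bar R^\ep e_2-e_2|+|b^\ep|\le C\ep^{\alpha-2}$. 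Since $R^\ep(x')\in SO(3)$ one has $R^\ep e_3=R^\ep e_1\times R^\ep e_2$, so control of the first two columns of $R^\ep-Id$ (from the last bound together with $\|R^\ep-\bar R^\ep\|_{L^2}\le C\ep^{\alpha-2}$) controls the third column as well, and \eqref{rt4} follows.

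I expect the last step to be the genuine obstacle: the elastic bound \eqref{elasticbd} by itself determines $\nep y^\ep$ only up to an arbitrary global rotation, and it is precisely the interplay between the Dirichlet datum on $\Gamma_d$ and the degeneracy of the thickness variable in $\psi^\ep$ as $\ep\to0$ that must be exploited to exclude a nontrivial limiting rotation — this is where the hypotheses on $\partial\omega$ and on $\gamma_d$ are really used. The patching in the second step, though technically the most involved, is by now entirely standard.
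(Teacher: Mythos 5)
The paper imports this statement from \cite[Theorem 3.3]{D1} without proof, so there is no in-paper argument to measure your sketch against; your first two steps (local FJM rigidity on an $\ep$-grid, patching by mollification, projection onto $SO(3)$) are the standard route to \eqref{rt1}--\eqref{rt3} and are certainly what \cite{D1} follows. The anchoring step for \eqref{rt4}, which you correctly flag as the genuine obstacle, has a gap. Averaging out $x_3$ in $\|(Id-\bar R^\ep)\psi^\ep - b^\ep\|_{L^2(\Gamma_d)}\le C\ep^{\alpha-2}$ discards the summand $\ep x_3(Id-\bar R^\ep)e_3$; retaining it (it is $L^2(\Gamma_d)$-orthogonal to the part you keep) shows that the trace of $y^\ep$ delivers only $\ep\,|(Id-\bar R^\ep)e_3|\le C\ep^{\alpha-2}$, i.e.\ $|(Id-\bar R^\ep)e_3|\le C\ep^{\alpha-3}$, one power of $\ep$ short of what \eqref{rt4} needs. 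Your conclusion therefore rests entirely on the in-plane part $(Id-\bar R^\ep)[e_1|e_2]\,x'$ controlling the whole $3\times 2$ block, which is true precisely when $\gamma_d$ is not contained in a straight line. That is not a consequence of the $C^2$ regularity of $\partial\omega$ (a $C^2$ domain can perfectly well have a flat boundary arc), so the parenthetical ``(here the regularity of $\partial\omega$ enters)'' is off: if $\gamma_d$ happens to be a segment, the trace-of-$y^\ep$ argument bounds $|\bar R^\ep-Id|$ only by $C\ep^{\alpha-3}$, which is vacuous for $\alpha=3$.

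To recover the missing power of $\ep$ one has to use the scaled tangential derivative of the boundary datum rather than only its trace: on $\Gamma_d$ one has $\tfrac{1}{\ep}\partial_3(\phi^\ep\circ\psi^\ep)=e_3+O(\ep^{\alpha-2})$, which matches the third column of $\nep y^\ep$, and hence $R^\ep e_3$, at rate $\ep^{\alpha-2}$. Since the trace of $\nep y^\ep$ is not defined for $y^\ep\in W^{1,2}$, turning this into a rigorous bound on $R^\ep e_3-e_3$ requires localizing the rigidity estimate on thin slabs adjacent to $\Gamma_d$ and working with difference quotients in the $e_3$-direction there, as in the boundary arguments of \cite{FJM2} and \cite{LM2}; this is exactly the technically delicate part of the proof that your sketch compresses away.
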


Let $\A:\mathbb{M}^{2\times 2}\to \mthree_{\sym}$ be the operator given by
$$\A F:=\Bigg(\begin{array}{cc}\sym\,F&\hspace{-0.5 cm}\begin{array}{c}\lambda_1(F)\vspace{-0.1 cm}\\\lambda_2(F)\end{array}\vspace{-0.1 cm}\\\begin{array}{cc}\lambda_1(F)&\lambda_2(F)\end{array}&\hspace{-0.5 cm}\lambda_3(F)\end{array}\Bigg)\quad\text{for every }F\in\M^{2\times 2},$$
 where for every $F\in\M^{2\times 2}$ the triple $(\lambda_1(F),\lambda_2(F),\lambda_3(F))$ is the unique solution to the minimum problem
$$\min_{\lambda_i\in\R}Q\Bigg(\begin{array}{cc}\sym\,F&\hspace{-0.5 cm}\begin{array}{c}\lambda_1\vspace{-0.1 cm}\\\lambda_2\end{array}\vspace{-0.1 cm}\\\begin{array}{cc}\lambda_1&\lambda_2\end{array}&\hspace{-0.5 cm}\lambda_3\end{array}\Bigg).$$
We remark that for every $F\in\M^{2\times 2}$, $\A(F)$ is given by the unique solution to the linear equation
\begin{equation}
 \label{linearmin}
\C \A (F):\Bigg(\begin{array}{ccc}0&0&\lambda_1\\0&0&\lambda_2\\\lambda_1&\lambda_2&\lambda_3\end{array}\Bigg)=0\quad\text{for every }\lambda_1,\lambda_2,\lambda_3\in\R.
\end{equation}
This implies, in particular, that $\A$ is linear.

We define the quadratic form $Q_2:\mathbb{M}^{2\times 2}\to [0,+\infty)$ as
$$Q_2(F)=Q(\A (F))\quad\text{for every }F\in \mathbb{M}^{2\times 2}.$$
By properties of $Q$, we have that $Q_2$ is positive definite on symmetric matrices. We also define the tensor $\C_2:\mathbb{M}^{2\times 2}\to \mthree_{\sym}$, given by

\begin{equation}
\label{defc2}
\C_2 F:=\C\A (F)\quad\text{for every }F\in \mathbb{M}^{2\times 2}.
\end{equation}
We remark that by \eqref{linearmin} there holds
\begin{equation}
\label{nothird}
\C_2 F:G=\C_2 F:\Big(\begin{array}{cc}\sym\,G&0\\0&0\end{array}\Big)\quad\text{for every }F\in \mathbb{M}^{2\times 2},\,G\in\mthree
\end{equation}
and
$$Q_2(F)=\frac{1}{2}\C_2 F:\Big(\begin{array}{cc}\sym\,F&0\\0&0\end{array}\Big)\quad\text{for every }F\in \mathbb{M}^{2\times 2}.$$

Given a sequence of deformations $(y^{\ep})\subset W^{1,2}(\Omega;\R^3)$, we consider some associated quantities: the in-plane displacements
\begin{equation}
\label{inplane}
u^{\ep}(x'):=\frac{1}{\ep^{\alpha-1}}\intt{\big(\m{{y}^{\ep}}(x',x_3)-x'\big)}\quad\text{for a.e. }x'\in \omega,
\end{equation}
and the out-of-plane displacements
\begin{equation}
\label{outofplane}
v^{\ep}(x'):=\frac{1}{\ep^{\alpha-2}}\intt{y^{\ep}_3(x',x_3)}\quad\text{for a.e. }x'\in \omega.
\end{equation}
For every sequence $(y^{\ep})$ in $W^{1,2}(\Omega;\R^3)$ satisfying both \eqref{bddatum} and \eqref{elasticbd}, we introduce also the strains
\begin{equation}
\label{defgep}
G^{\ep}(x):=\frac{(R^{\ep}(x))^T\nep y^{\ep}(x)-Id}{\ep^{\alpha-1}}\quad\text{for a.e. }x\in\Omega,
\end{equation}
where the maps $R^{\ep}$ are the pointwise rotations provided by Theorem \ref{compactbd1}. 

Denote by $\cal{A}(u^0,v^0)$ the set of triples $(u,v,p)\in W^{1,2}(\Omega;\R^2)\times W^{2,2}(\Omega)\times L^2(\Omega;\md)$ such that 
$u=u^0,\,v=v^0,\text{ and }\nabla'v=\nabla'v^0$ $\cal{H}^1\text{ - a.e. on }\gamma_d$.
 The next theorem allows us to deduce some compactness properties for the displacements and strains and a liminf inequality for the scaled stored energy and plastic dissipation potential, introduced in \eqref{encov} and \eqref{disscov} (see \cite[Theorem 3.4]{D1}).
\begin{teo}
\label{liminfineq}
Assume that $\alpha\geq 3$. Let $(y^{\ep},P^{\ep})$ be a sequence of pairs in $\cal{A}_{\ep}(\phi^{\ep})$ satisfying 
\begin{equation}
\label{engest2}
\cal{I}(y^{\ep},P^{\ep})\leq C\ep^{2\alpha-2}
\end{equation}
for every $\ep>0$.
 Let $u^{\ep}$, $v^{\ep}$ and $G^{\ep}$ be defined as in \eqref{inplane}, \eqref{outofplane}, and \eqref{defgep}, respectively. Then, there exist $(u,v,p)\in \cal{A}(u^0,v^0)$ such that, up to subsequences, there hold
\begin{eqnarray}
&&\label{cptyep}y^{\ep}\to \Big(\begin{array}{c}x'\\0\end{array}\Big)\quad\text{strongly in }W^{1,2}(\Omega;\R^3),\\
&&\label{cptuep} u^{\ep}\deb u\quad\text{weakly in }W^{1,2}(\omega;\R^2),\\
&&\label{cptvep} v^{\ep}\to v\quad\text{strongly in }W^{1,2}(\omega),\\
&&\label{cptnep3}\frac{\nabla' y^{\ep}_3}{\ep^{\alpha-2}}\to \nabla' v\quad\text{strongly in }L^2(\Omega;\R^2),
\end{eqnarray}
and the following estimate holds true
\begin{equation}
\label{3comphest}
\big\|\frac{y^{\ep}_3}{\ep}-x_3-\ep^{\alpha-3}v^{\ep}\big\|_{L^2(\Omega)}\leq C\ep^{\alpha-2}.
\end{equation}
Moreover, there exists $G\in L^2(\Omega;\mthree)$ such that
\begin{equation}
\label{cptGep}
G^{\ep}\deb G\quad\text{weakly in }L^2(\Omega;\mthree),
\end{equation}
and the $2\times 2$ submatrix $G'$ satisfies
\begin{equation}
\label{gaff}
G'(x', x_3) = G_0(x') - x_3 (\nabla')^2 v(x')\quad\text{for a.e. }x\in\Omega,
\end{equation}
where
\begin{eqnarray}
&&\label{Ga3} \sym\, G_0 = \frac{(\nabla' u+(\nabla' u)^T +\nabla' v\otimes \nabla' v)}{2}\quad\text{if } \alpha=3,\\
&&\label{Ga>3} \sym\, G_0 = \sym \nabla' u\quad\text{if }\alpha> 3.
\end{eqnarray}
The sequence of plastic strains $(P^{\ep})$ fulfills 
\begin{equation}
\label{Pep1} P^{\ep}(x)\in K\quad\text{for a.e. }x\in\Omega,
\end{equation}
and
\begin{equation}
\label{Pep2} \|P^{\ep}-Id\|_{L^2(\Omega;\mthree)}\leq C\ep^{\alpha-1}
\end{equation}
for every $\ep$. Moreover, setting 
\begin{equation}
\label{defpep}
p^{\ep}:=\frac{P^{\ep}-Id}{\ep^{\alpha-1}},
\end{equation} up to subsequences
\begin{equation}
\label{wconvpep}
p^{\ep}\deb p\quad\text{weakly in }L^2(\Omega;\mthree).
\end{equation}
Finally,
\begin{eqnarray}
\label{liminftot}\intom{Q_2(\sym\, G'-p')}+\intom{B(p)}\leq \liminf_{\ep\to 0} \frac{1}{\ep^{2\alpha-2}}\cal{I}(y^{\ep},P^{\ep}).
\end{eqnarray}
If in addition 
\begin{equation}
\label{engest3}
\frac{1}{\ep^{\alpha-1}}\intom{D({P}^{\ep,0},{P}^{\ep})}\leq C\quad\text{for every }\ep>0
\end{equation}
and there exist a map $p^0\in L^2(\Omega;\mthree_D)$ and a sequence $({p}^{\ep,0})\subset L^2(\Omega;\mthree)$ such that $P^{\ep,0}=Id+\ep^{\alpha-1}p^{\ep,0}$, with ${p}^{\ep,0}\deb p^0$ weakly in $L^2(\Omega;\mthree)$, then
\begin{equation}
\label{liminfdiss}
\intom{H_{D}({p}-p^0)}\leq \liminf_{\ep\to 0} \frac{1}{\ep^{\alpha-1}}\intom{D({P}^{\ep,0},{P}^{\ep})}.
\end{equation}
\end{teo}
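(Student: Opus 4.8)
The plan is to extract every compactness property from the energy bound \eqref{engest2} via the rigidity estimate of Theorem \ref{compactbd1}, then to identify the limiting strains, and finally to prove the two lower bounds by a local quadratic expansion of $W_{el}$ and $W_{hard}$ near the identity. First I would use that $W_{hard}\equiv+\infty$ off $K$: then \eqref{engest2} forces $P^{\ep}(x)\in K$ a.e., i.e. \eqref{Pep1}, and in particular $|P^{\ep}|+|(P^{\ep})^{-1}|\le c_k$ by \eqref{prk1}; moreover \eqref{prh3} and \eqref{engest2} give $c_6\|P^{\ep}-Id\|_{L^2}^2\le C\ep^{2\alpha-2}$, i.e. \eqref{Pep2}, so $p^{\ep}$ in \eqref{defpep} is bounded in $L^2(\Omega;\mthree)$, \eqref{wconvpep} holds up to subsequences, and $\det P^{\ep}=1$ together with a Taylor expansion of the determinant gives $\tr{p}=0$. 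Since $\dist(\,\cdot\,,SO(3))$ is controlled under right multiplication by matrices close to $Id$, the bounds \eqref{Pep1}, \eqref{Pep2} and (H4) yield \eqref{elasticbd}, and Theorem \ref{compactbd1} then produces rotations $R^{\ep}$ satisfying \eqref{rt1}--\eqref{rt4}.

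Next I would derive the compactness of the displacements. From \eqref{rt2} and \eqref{rt4}, $\nep y^{\ep}\to Id$ in $L^2$, which with the boundary condition \eqref{bddatum} and a Poincar\'e inequality gives \eqref{cptyep}. The sequence $\ep^{-(\alpha-2)}(R^{\ep}-Id)$ is bounded in $W^{1,2}(\omega;\mthree)$ by \eqref{rt3}--\eqref{rt4}, hence converges weakly (up to subsequences) to a field that is skew-symmetric because $R^{\ep}\in SO(3)$ and whose third row determines $\nabla'v$ for some $v\in W^{2,2}(\omega)$; combined with \eqref{rt2} this yields \eqref{cptnep3}, then \eqref{cptvep}, \eqref{3comphest} and, for $u^{\ep}$ as in \eqref{inplane}, the weak convergence \eqref{cptuep} to some $u\in W^{1,2}(\omega;\R^2)$, the Dirichlet conditions on $\gamma_d$ passing to the limit so that $(u,v,p)\in\cal{A}(u^0,v^0)$. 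Since $G^{\ep}=\ep^{-(\alpha-1)}(R^{\ep})^T(\nep y^{\ep}-R^{\ep})$, estimate \eqref{rt2} gives $\|G^{\ep}\|_{L^2}\le C$ and hence \eqref{cptGep}. The structure relation \eqref{gaff} is then obtained by a careful expansion: writing $R^{\ep}=Id+\ep^{\alpha-2}A^{\ep}+\tfrac12\ep^{2(\alpha-2)}(A^{\ep})^2+\dots$ with $A^{\ep}$ skew-symmetric and bounded (so that $(R^{\ep})^T R^{\ep}=Id$ to the relevant order), and keeping track of the $x_3$-dependence of $\nep y^{\ep}$ together with the second-order contribution of the rotations, one finds that the $x_3$-linear part of $G'$ is $-x_3(\nabla')^2v$ (bending), its mean is $G_0$ with $\sym G_0=\sym\nabla'u$, and the extra membrane term $\tfrac12\nabla'v\otimes\nabla'v$ appears in $\sym G_0$ precisely when $2(\alpha-2)=\alpha-1$, i.e. $\alpha=3$ --- this is \eqref{Ga3}--\eqref{Ga>3}.

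For the lower bound \eqref{liminftot} I would work on the ``good set'' $O_{\ep}:=\{|G^{\ep}|\le\ep^{-(\alpha-1)/2}\}\cap\{|p^{\ep}|\le\ep^{-(\alpha-1)/2}\}$, for which $\chi_{O_{\ep}}\to1$ a.e. by Chebyshev's inequality and $\ep^{\alpha-1}(|G^{\ep}|+|p^{\ep}|)\to0$ uniformly. On $O_{\ep}$, frame indifference (H3) gives $W_{el}(\nep y^{\ep}(P^{\ep})^{-1})=W_{el}\big((Id+\ep^{\alpha-1}G^{\ep})(Id+\ep^{\alpha-1}p^{\ep})^{-1}\big)$; expanding the argument as $Id+\ep^{\alpha-1}(G^{\ep}-p^{\ep})+o(\ep^{\alpha-1})$ and using \eqref{quadrwel}, one gets $W_{el}\ge\ep^{2\alpha-2}\big(Q(G^{\ep}-p^{\ep})-\delta|G^{\ep}-p^{\ep}|^2+o(1)\big)$, and likewise $W_{hard}(P^{\ep})=\twh(Id+\ep^{\alpha-1}p^{\ep})\ge\ep^{2\alpha-2}(1-\delta)\B{p^{\ep}}$ by \eqref{prh4}. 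Discarding the nonnegative contribution on $\Omega\setminus O_{\ep}$, using $Q(G^{\ep}-p^{\ep})=Q(\sym(G^{\ep}-p^{\ep}))\ge Q_2(\sym(G^{\ep})'-(p^{\ep})')$ by the minimality defining $\A$, the weak $L^2$-convergences $\chi_{O_{\ep}}(\sym(G^{\ep})'-(p^{\ep})')\deb\sym G'-p'$ and $\chi_{O_{\ep}}p^{\ep}\deb p$ (note $\chi_{O_{\ep}}^2=\chi_{O_{\ep}}$, so $Q_2$ and $B$ commute with the truncation), and the weak lower semicontinuity of the convex functionals $\int_\Omega Q_2(\,\cdot\,)$ and $\int_\Omega\B{\,\cdot\,}$, I would pass to the liminf and let $\delta\to0$ to conclude \eqref{liminftot}.

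Finally, for \eqref{liminfdiss} I would write $D(P^{\ep,0},P^{\ep})=D(Id,P^{\ep}(P^{\ep,0})^{-1})$ with $P^{\ep}(P^{\ep,0})^{-1}=Id+\ep^{\alpha-1}(p^{\ep}-p^{\ep,0})+O(\ep^{2\alpha-2})$, and invoke a local lower bound for the dissipation distance: since every admissible path in \eqref{distid} satisfies $\dot c\, c^{-1}\in\md$, Jensen's inequality together with the one-homogeneity and convexity of $H_D$ yields $D(Id,Id+H)\ge H_D(\sym H)+o(|H|)$ as $H\to0$ with $\tr{H}\to0$, suboptimal far-away paths being ruled out by \eqref{growthh} and the smallness of $D$ guaranteed by \eqref{engest3}; hence $\ep^{-(\alpha-1)}D(P^{\ep,0},P^{\ep})\ge H_D(\sym(p^{\ep}-p^{\ep,0}))+o(1)$ a.e., and \eqref{liminfdiss} follows from $\sym(p^{\ep}-p^{\ep,0})\deb p-p^0$ and the lower semicontinuity of $\int_\Omega H_D(\,\cdot\,)$. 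I expect the two genuine difficulties to be the identification \eqref{gaff}--\eqref{Ga>3} of the limiting strain --- in particular the appearance of the Von K\'arm\'an term $\tfrac12\nabla'v\otimes\nabla'v$, a second-order effect of the constraint $R^{\ep}\in SO(3)$ in the critical regime $\alpha=3$ --- and the local lower bound for $D$ in the last step, delicate precisely because $H_D$ is only positively one-homogeneous; the truncation sets $O_{\ep}$ are what make the liminf argument compatible with the merely coercive, not upper-bounded, physical growth of $W_{el}$.
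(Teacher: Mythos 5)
The paper's own ``proof'' of this theorem is a one-line reference to \cite[Proof of Theorem 3.4]{D1}, so there is no explicit argument in the present text to compare against; your outline, however, reproduces the Friesecke--James--M\"uller methodology (rigidity estimate, compactness and skew-symmetry of the scaled rotations, the Von K\'arm\'an term arising from the second-order part of $R^{\ep}\in SO(3)$ at the critical scaling $2(\alpha-2)=\alpha-1$, and the truncation-plus-Taylor argument for both liminf inequalities) that the cited proof necessarily relies on, and it is correct. The one spot you compress most is the local lower bound for the dissipation distance: the estimate $D(Id,Id+H)\geq H_D(\mathrm{dev\,sym}\,H)+O(|H|^2)$ does hold for $H$ small with $Id+H\in SL(3)$ (near-optimal paths stay within $O(D/r_K)$ of the identity by \eqref{growthh}, so Jensen applied to $\int_0^1\dot c\,c^{-1}\,dt\in\md$ plus the $O(\sup_t|c(t)-Id|\cdot\int|\dot c|)$ error gives it), but since \eqref{engest3} only controls $\int_\Omega D$ and not a pointwise bound, a truncation to a good set (as you do for the elastic term) is needed here too; you allude to this at the end, and it is indeed the right fix.
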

{\begin{proof}
The proof follows easily by adapting \cite[Proof of Theorem 3.4]{D1}. 
\end{proof}
{We conclude this section by providing an approximation result for triples $(u,v,p)\in \cal{A}(0,0)$ by means of smooth triples.}
More precisely, denoting by $C^{\infty}_c(\omega\cup\gamma_n)$ the sets of smooth maps having compact support in $\omega\cup\gamma_n$, the following lemma holds true.
\begin{lem}
\label{bdc}
{{
Let $(u,v,p)\in \cal{A}(0,0)$. Then there exists a sequence of triples $(u^{k},v^{k},p^{k})\in C^{\infty}_c(\omega\cup\gamma_n;\R^2)\times C^{\infty}_c(\omega\cup\gamma_n)\times C^{\infty}_c(\Omega;\md)$ such that
\begin{eqnarray*}
&&u^{k}\to u\quad\text{strongly in }W^{1,2}(\omega;\R^2),\\
&&v^{k}\to v\quad\text{strongly in }W^{2,2}(\omega),\\
&&p^{k}\to p\quad\text{strongly in }L^2(\Omega;\md).
\end{eqnarray*}
}} \end{lem}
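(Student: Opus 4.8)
The plan is to decouple the three components and handle them essentially separately, since the constraints in $\cal{A}(0,0)$ are a Dirichlet-type condition on $\gamma_d$ for $(u,v,\nabla'v)$ and a pointwise linear constraint $\tr p=0$ on $p$ (with no boundary condition). For the plastic strain $p\in L^2(\Omega;\md)$, the approximation is the easy part: since $\md$ is a fixed linear subspace of $\mthree$, standard mollification $p*\rho_k$ (extended by reflection or zero near $\partial\Omega$, or simply composed with a dilation of $\Omega$) stays in $\md$ pointwise and converges in $L^2(\Omega;\md)$; a cutoff then gives compact support in the interior $\Omega$. The genuine content is the simultaneous approximation of $(u,v)$ by smooth maps with compact support in $\omega\cup\gamma_n$, in $W^{1,2}\times W^{2,2}$ respectively, which is exactly the density statement used in \cite[Section 3]{DM}.

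For $u\in W^{1,2}(\omega;\R^2)$ with $u=0$ $\cal{H}^1$-a.e.\ on $\gamma_d$, I would invoke the standard density of $C^\infty_c(\omega\cup\gamma_n;\R^2)$ in $\{u\in W^{1,2}(\omega;\R^2): u=0 \text{ on }\gamma_d\}$: this holds because $\partial\omega$ is $C^2$ and $\gamma_d$ is a relatively open arc with endpoints $P_1,P_2$, so locally near $\gamma_d$ one can reflect/extend across $\partial\omega$ and mollify, and near the two transition points $P_1,P_2$ one uses a logarithmic cutoff (a Hardy-type argument) to kill the trace without destroying $W^{1,2}$ convergence. For $v\in W^{2,2}(\omega)$ with $v=0$ and $\nabla'v=0$ on $\gamma_d$, the same scheme applies in $W^{2,2}$: extend by an appropriate even/odd reflection across the $C^2$ curve $\gamma_d$ so that both $v$ and its normal derivative vanish, mollify, and then insert a cutoff that vanishes in a shrinking neighborhood of $P_1,P_2$, using a $W^{2,2}$ Hardy inequality to control the cutoff error. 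The $C^2$ regularity of $\partial\omega$ and the fact that $\partial\lfloor_{\partial\omega}\gamma_d=\{P_1,P_2\}$ are precisely what make the reflection-and-cutoff construction work and are the reason these hypotheses are imposed.

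The main obstacle is the behavior near the two corner points $P_1,P_2$ where the Dirichlet condition on $\gamma_d$ meets the free part $\gamma_n$: naive mollification of an extension will not have compact support away from these points, and cutting off can spoil the $W^{1,2}$ (resp. $W^{2,2}$) norm unless one exploits the vanishing trace via a Hardy inequality of the form $\int |w|^2/\mathrm{dist}(\cdot,\partial\omega)^2 \le C\|\nabla w\|_2^2$ (and the second-order analogue). Concretely I would: (i) fix a finite cover of $\overline\omega$ by balls, one centered at each $P_i$, a few covering $\overline{\gamma_d}\setminus\{P_1,P_2\}$, and the rest compactly inside $\omega\cup\gamma_n$; (ii) on each non-corner boundary patch, straighten $\partial\omega$ by the $C^2$ change of variables, reflect so that the trace (and, for $v$, the normal trace) vanishes, mollify, and undo the change of variables; (iii) on the corner patches, write $v=0$ on $\gamma_d$ as a Hardy-controlled smallness of $v/\mathrm{dist}$ and multiply by a radial cutoff $\chi(\log(|x-P_i|/\delta)/\log\delta)$-type function to push the support off $P_i$ with error $o(1)$ as $\delta\to0$; (iv) glue with the ambient partition of unity; (v) for $p$, simply mollify and cut off in the interior; (vi) run all three constructions through the same mollification parameter and diagonalize. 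This is routine once the Hardy inequalities near $P_1,P_2$ are in place — that estimate (valid because $\gamma_d$ has only two endpoints and $\partial\omega$ is $C^2$) is the crux — so I would state it as a lemma and refer to \cite{DM} for the details of the gluing, as the authors evidently intend.
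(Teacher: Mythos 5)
Your proposal takes essentially the same route as the paper: the published proof is a two-line delegation that treats $p$ by "standard arguments" (mollification and interior cutoff, exactly as you describe) and refers the $(u,v)$ part to \cite[Theorem 3.9 and Lemma 6.10]{DM}, which is where the flattening/reflection/mollification machinery together with the cutoff argument at the two transition points $P_1,P_2$ lives. Your sketch is a reasonable unpacking of that citation; the one place to be careful is the second-order (W$^{2,2}$) Hardy-type control for the $v\,\nabla^2\chi_\delta$ term near the corners, which is precisely the technical point handled in the cited reference and which does require both $v=0$ and $\nabla'v=0$ on $\gamma_d$.
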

\begin{proof}
The approximation of the plastic strain $p$ is obtained by standard arguments. The approximation of the in-plane displacements and out-of-plane displacements follows by adapting the arguments in \cite[Theorem 3.9 and Lemma 6.10]{DM}.
\end{proof}

\section{The quasistatic evolution problem}
\label{quas}
In this section we set the quasistatic evolution problem. 

For every $t\in [0,T]$ we prescribe a boundary datum $\pep(t)\in W^{1,\infty}(\Omega;\R^3)\cap C^{\infty}(\R^3;\R^3)$, defined as
\begin{equation}
\nonumber
\pep(t,x):=\Big(\begin{array}{c}x'\\x_3\end{array}\Big)+\ep^{\alpha-1}\Big(\begin{array}{c}u^0(t,x')\\0\end{array}\Big)+\ep^{\alpha-2}\Big(\begin{array}{c}-x_3\nabla' v^0(t,x')\\v^0(t,x')\end{array}\Big),
\end{equation} 
for every $x\in\R^3$, where the map $t\mapsto u^0(t)$ is assumed to be $C^{1}([0,T];C^1(\R^2;\R^2))$ and the map $t\mapsto v^0(t)$ is $C^{1}([0,T];C^2(\R^2))$. We consider deformations $t\mapsto y^{\ep}(t)$ from $[0,T]$ into $W^{1,2}(\Omega;\R^3)$ that satisfy 
\begin{equation}
\nonumber
y^{\ep}(t,x)=\pep(t,(x',\ep x_3))\quad\cal{H}^2\text{ -a.e. on }\Gamma_d,
\end{equation}
and plastic strains $t\mapsto P^{\ep}(t)$ from $[0,T]$ into $L^2(\Omega;SL(3))$. 

For technical reasons, it is convenient to modify the map $t\mapsto\pep(t)$ outside the set $\Omega$. We consider a truncation function $\tep\in W^{1,\infty}(\R)\cap {C^{1}}(\R)$ satisfying 
\begin{eqnarray}
 \label{treq1}&&\tep(s)=s\quad\text{ in }(-\ell_{\ep}, \ell_{\ep}),\\
 \label{treq2}&&|\tep(s)|\leq |s|\text{ for every }s\in \R,\\
\label{treq3}&&\|\tep\|_{L^{\infty}(\mathbb{R})}\leq 2 \ell_{\ep},\\
\label{treq4}&&\dot{\tep}(s)=0\quad\text{ if }|x_3|\geq \ell_{\ep}+1,\\
\label{treq5}&&\|\dot{\tep}(s)\|_{L^{\infty}(\R)}\leq 2,
\end{eqnarray}
where $\ell_{\ep}$ is such that 
\begin{eqnarray}
\label{lp1}&&\ep^{\alpha-1-\gamma}\ell_{\ep}\to 0,\\
\label{lp2}&& \ep \ell_{\ep}\to +\infty,\\
\label{lp3}&&\ep^{2\alpha-2}\ell_{\ep}^3\to 0,
\end{eqnarray}
for some $0<\gamma<\alpha-2$.
For $\alpha>3$ we also require
\begin{equation}
\label{lp4}
\ep^{\alpha-1}\ell_{\ep}^2\to 0.
\end{equation}
\begin{oss}
A possible choice of $\ell_{\ep}$ is $\ell_{\ep}=\frac{1}{\ep^{1+\lambda}}$, with $0<\lambda<\min\{\frac{\alpha-3}{2},\alpha-2-\gamma\}$ when $\alpha>3$ and $0<\lambda<\min\{\frac{1}{3},1-\gamma\}$ in the case $\alpha=3$.
\end{oss}

With a slight abuse of notation, for every $t\in [0,T]$ we still denote by $\pep(t)$ the map defined as
\begin{equation}
\label{defphiep}
\pep(t,x):=\Big(\begin{array}{c}x'\\x_3\end{array}\Big)+\ep^{\alpha-1}\Big(\begin{array}{c}u^0(t,x')-\tep\big(\frac{x_3}{\ep}\big)\nabla' v^0(t,x')\\0\end{array}\Big)+\ep^{\alpha-2}\Big(\begin{array}{c}0\\v^0(t,x')\end{array}\Big)
\end{equation}
for every $x\in\R^3$. 
\begin{oss}
\label{propinverse}
Conditions \eqref{treq1} and \eqref{lp2} guarantee that $\pep(t)$ is indeed an extension of the originally prescribed boundary datum, for $\ep$ small enough. Conditions \eqref{treq3} and \eqref{treq5} provide a uniform bound with respect to $t$ on the $W^{1,\infty}(\R^3;\R^3)$ norm of $\pep(t)-id$. By \eqref{treq3}, \eqref{treq5} and \eqref{lp1}, there exists $\ep_0>0$ such that, for every $t\in [0,T]$ and $\ep<\ep_0$, the map $\pep(t):\R^3\to \R^3$ is invertible with smooth inverse $\vep(t)$. Since $$\pep(t, \vep(t,x))=x\quad\text{for every }x\in\R^3,$$ by \eqref{defphiep} there holds
\begin{eqnarray}
\label{fcomp} && \bvep(t)-x'=-\ep^{\alpha-1}u^0(t,\bvep(t))+\ep^{\alpha-1}\tep\Big(\frac{\vep_3(t)}{\ep}\Big)\nabla' v^0(t,\bvep(t)),\\
\label{3comp}&& \vep_3(t)-x_3=-\ep^{\alpha-2}v^0(t,\bvep(t)),
\end{eqnarray}
 for every $t\in [0,T]$. Hence, by the smoothness of $u^0$ and $v^0$ and by \eqref{treq3}, we deduce the estimates
\begin{equation}
\label{distinv1}
\|\bvep(t)-x'\|_{L^{\infty}(\R^3;\R^2)}\leq C\ep^{\alpha-1}\ell_{\ep},
\end{equation}
and
\begin{equation}
\label{distinv13}
\|\vep_3(t)-x_3\|_{L^{\infty}(\R^3)}\leq C\ep^{\alpha-2},
\end{equation}
where both constants are independent of $t$. In particular, \eqref{fcomp} yields
\begin{eqnarray}
\nonumber \nabla \bvep(t)-\Big(\begin{array}{ccc}1&0&0\\0&1&0\end{array}\Big)&&=-\ep^{\alpha-1}\nabla' u^0(t,\bvep(t))\nabla \bvep(t)\\
\nonumber &&+\ep^{\alpha-1}\tep\Big(\frac{\vep_3(t)}{\ep}\Big)(\nabla')^2 v^0(t,\bvep(t))\nabla \bvep(t)\\
\label{expgr}&&+\ep^{\alpha-2}\dot{\tep}\Big(\frac{\vep_3(t)}{\ep}\Big)\nabla' v^0(t,\bvep(t))\otimes\nabla \vep_3(t),
\end{eqnarray}
and \eqref{3comp} implies
\begin{equation}
\label{gradinv0}
\nabla \vep_3(t)-e_3=-\ep^{\alpha-2} (\nabla \bvep(t))^T\nabla' v^0(t,\bvep(t)),
\end{equation}
for every $t\in [0,T]$.

A direct computation shows that
\begin{eqnarray}
\nonumber&&\nabla \pep(t,x)=Id+\ep^{\alpha-1}\Big(\begin{array}{cc}\nabla' u^0(t,x')&0\\0&0\end{array}\Big)-\ep^{\alpha-1}\Big(\begin{array}{cc}\tep\big(\frac{x_3}{\ep}\big)(\nabla')^2 v^0(t,x')&0\\0&0\end{array}\Big)\\
\label{gradpep}&&+\ep^{\alpha-2}\Big(\begin{array}{cc}0&-\dot{\tep}\big(\frac{x_3}{\ep}\big)\nabla' v^0(t,x')\\(\nabla' v^0(t,x'))^T&0\end{array}\Big)\quad\text{for every }x\in\R^3.
\end{eqnarray}
Hence by \eqref{treq3}, \eqref{treq5} and \eqref{lp1} there holds
\begin{equation}
\label{bddinverse}
\|\nabla \vep (t)\|_{L^{\infty}(\R^3;\mthree)}\leq \|(\nabla \pep (t))^{-1}\|_{L^{\infty}(\R^3;\mthree)}\leq C,\end{equation}
for every $t\in[0,T]$ and for every $\ep<\ep_0$. Therefore, \eqref{treq3}, \eqref{treq5}, \eqref{lp2}, \eqref{expgr} and \eqref{gradinv0} yield 
\begin{equation}
\label{gradinv}
\|\nabla \bvep(t)-(e_1|e_2|0)\|_{L^{\infty}(\R^3;\M^{3\times 2})}\leq C\ep^{\alpha-1}\ell_{\ep},
\end{equation}
and
\begin{equation}
\label{gradinv1}
\|\nabla \vep_3(t)-e_3\|_{L^{\infty}(\R^3;\R^3)}\leq C\ep^{\alpha-2}.
\end{equation}
\end{oss}
By Remark \ref{propinverse} for $\ep$ small enough the function $\pep(t)$ is a smooth diffeomorphism for every $t\in [0,T]$. This implies that we are allowed to define a map $t\mapsto \zep(t)$ from $[0,T]$ into $W^{1,2}(\Omega;\R^3)$ as the pointwise solution of 
\begin{equation}
\nonumber
y^{\ep}(t,x)=\pep(t,\zep(t,x))
\end{equation}
for every $t\in [0,T]$. {We note that 
\begin{equation}
\label{rsze}
\zep(t)=(x',\ep x_3)\quad\cal{H}^2\text{ - a.e. on }\Gamma_d
\end{equation}
for every $t\in [0,T]$.} According to this change of variable, the elastic energy at time $t$ associated to the deformation $y^{\ep}(t)$ can be written in terms of $\zep(t)$ as
$$\intom{W_{el}(\nep y^{\ep}(t) (P^{\ep})^{-1}(t))}=\intom{W_{el}\big(\nabla \pep (t,\zep(t))\nep \zep (t)(P^{\ep})^{-1}(t)\big)}.$$

For every $t\in [0,T]$ we define the three-dimensional stress as 
\begin{equation}
\nonumber
E^{\ep}(t):=\frac{1}{\ep^{\alpha-1}}DW_{el}\Big(\nabla \pep (t,\zep(t))\nep \zep (t)(P^{\ep})^{-1}(t)\Big)\Big(\nabla \pep (t,\zep(t))\nep \zep (t)(P^{\ep})^{-1}(t)\Big)^T.
\end{equation} 
Let $s_1,s_2\in [0,T]$, with $s_1\leq s_2$. For every function $t\mapsto P(t)$ from $[0,T]$ into $L^2(\Omega;SL(3))$, we define its dissipation as
$$\cal{D}(P;s_1,s_2):=\sup\Big\{\sum_{i=1}^N\intom{D(P(t_{i-1}), P(t_i))}: s_1=t_0<t_1<\cdots<t_N=s_2\Big\}.$$
Analogously, for every function $t\mapsto p(t)$ from $[0,T]$ into $L^2(\Omega;\md)$, we define its $H_D$-dissipation as
\begin{equation}
\label{NUMpag13}
\cal{D}_{H_D}(p;s_1,s_2):=\sup\Big\{\sum_{i=1}^N\intom{H_D( p(t_i)-p(t_{i-1}))}: s_1=t_0<t_1<\cdots<t_N=s_2\Big\}.
\end{equation}

Finally, we denote by $\cal{F}_{\ep}(t,z,P)$ the quantity
$$\cal{F}_{\ep}(t,z,P):=\intom{W_{el}\big(\nabla \pep (t,z)\nep z P^{-1}\big)}+\intom{W_{hard}(P)}$$
for every $t\in [0,T]$, $z\in W^{1,2}(\Omega;\R^3)$ and $P\in L^2(\Omega;SL(3))$.
We are now in a position to give the definition of quasistatic evolution associated to the boundary datum $t\mapsto\pep(t)$.
\begin{defin}\label{epquasevol}
Let $\ep>0$. 
An {\em$\ep$-quasistatic evolution} for the boundary datum $t\mapsto\pep(t)$ is a function $t\mapsto(\zep(t),P^{\ep}(t))$ from $[0,T]$ into $W^{1,2}(\Omega;\R^3)\times L^2(\Omega;SL(3))$ that satisfies the following conditions:
\begin{enumerate}
	\item[(qs1)] for every $t\in [0,T]$ we have {$\zep(t,x)=(x',\ep x_3)\quad\cal{H}^2\text{ - a.e. on }\Gamma_d$}, $P^{\ep}(t,x)\in K$ for a.e. $x\in\Omega$ and
\begin{eqnarray}
\nonumber\cal{F}_{\ep}(t,\zep(t),P^{\ep}(t))\leq \cal{F}_{\ep}(t,\tilde{z},\tilde{P})+{\ep^{\alpha-1}}\intom{D(P^{\ep}(t),\tilde{P})},
\end{eqnarray}
for every $(\tilde{z},\tilde{P})\in W^{1,2}(\Omega;\R^3)\times L^2(\Omega;SL(3))$ such that {$\tilde{z}(x)=(x',\ep x_3)\quad\cal{H}^2\text{ - a.e. on }\Gamma_d$} and $\tilde{P}(x)\in K$ for a.e. $x\in\Omega$;
	\item[(qs2)] the map 
	$$s\mapsto \intom{E^{\ep}(s):\Big(\nabla \dot{\pep}(s,\zep(s))(\nabla \pep)^{-1}(s,\zep(s))\Big)}$$ 
	is integrable in $[0,T]$ and for every $t\in[0,T]$
\begin{eqnarray}
\nonumber &&\cal{F}_{\ep}(t,\zep(t),P^{\ep}(t))+{\ep^{\alpha-1}}\cal{D}(P^{\ep};0,t)\\
\nonumber &&=\cal{F}_{\ep}(0,\zep(0),P^{\ep}(0))+{\ep^{\alpha-1}}\int_0^t{\intom{E^{\ep}(s):\Big(\nabla \dot{\pep}(s,\zep(s))(\nabla \pep)^{-1}(s,\zep(s))\Big)}\,ds}.
\end{eqnarray}
\end{enumerate}
\end{defin}
\begin{oss}
We remark that if the function $t\to (\zep(t),P^{\ep}(t))$ satisfies condition (qs1), then $E^{\ep}(t)\in L^1(\Omega;\mthree)$ for every $t\in [0,T]$. Indeed, by (qs1), taking $\tilde{z}(x)=(x',\ep x_3)$ for every $x\in\Omega$ and $\tilde{P}=P^{\ep}(t)$, we deduce
\begin{equation}
\label{preles}
\intom{W_{el}\big(\nabla \pep(t,\zep(t))\nep \zep(t)(P^{\ep})^{-1}(t)\big)}\leq \intom{W_{el}\big(\nabla \pep(t,(x',\ep x_3))(P^{\ep})^{-1}(t)\big)}.
\end{equation}
On the other hand, $P^{\ep}(t)\in K$ a.e. in $\Omega$ and for $\ep$ small enough there exists two constants $C_1$ and $C_2$ such that $\det (\nabla \pep(t,(x',\ep x_3)))\geq C_1$ and $\|\nabla \pep (t,(x',\ep x_3))\|_{L^{\infty}(\Omega;\mthree)}\leq C_2$. Therefore, by hypothesis (H1) the quantity in \eqref{preles} is finite and 
\begin{equation}
\label{detpo}
\det \big(\nabla \pep(t,\zep(t))\nep \zep(t)(P^{\ep})^{-1}(t)\big)>0\quad\text{a.e. in }\Omega
\end{equation}
for $\ep$ small enough. Finally, by \eqref{mandel2} we obtain
$$\intom{|E^{\ep}(t)|}\leq \frac{c_4}{\ep^{\alpha-1}}\Big(\intom{W_{el}(\nabla \pep(t,\zep(t))\nep \zep(t)(P^{\ep})^{-1}(t))}+1\Big)<+\infty.$$
\end{oss}
\begin{oss}
\label{Esym}
By the frame-indifference (H3) of $W_{el}$, there holds
\begin{equation}
\nonumber
DW_{el}(F)F^T=F(DW_{el}(F))^T\quad\text{for every }F\in\mthree_+.
\end{equation}
Therefore, by \eqref{detpo}, for $\ep$ small enough $E^{\ep}(t,x)\in\ms$ for every $t\in [0,T]$ and for a.e. $x\in\Omega$.
\end{oss}
Set
$$L_{\alpha}:=\begin{cases}0&\text{if }\alpha>3\\
1&\text{if }\alpha=3.
\end{cases}$$For every $\alpha\geq 3$ we define a reduced quasistatic evolution as follows.
\begin{defin}
\label{reda>3}
For $\alpha\geq 3$, a {\emph{reduced quasistatic evolution}} for the boundary data $t\mapsto u^0(t)$ and $t\mapsto v^0(t)$ is a map $t\mapsto (u(t),v(t),p(t))$ from $[0,T]$ into $W^{1,2}(\omega;\R^2)\times W^{2,2}(\omega)\times L^2(\Omega;\md)$, that satisfies the following conditions:
\begin{enumerate}
\item [(qs1$_{r\alpha}$)] for every $t\in [0,T]$ there holds $(u(t),v(t),p(t))\in\cal{A}(u^0(t),v^0(t))$, and setting 
\begin{equation}
\label{dsigmat}
e_{\alpha}(t):=\sym\nabla' u(t)+\tfrac{L_{\alpha}}{2}\nabla' v(t)\otimes\nabla' v(t)-x_3(\nabla')^2 v(t)-p'(t),
\end{equation} 
we have 
\begin{eqnarray*}
&&\intom{Q_2(e_{\alpha}(t))}+\intom{\B{p(t)}}\leq\intom{Q_2\big(\sym\,\nabla' \hat{u}+\tfrac{L_{\alpha}}{2}\nabla' \hat{v}\otimes\nabla' \hat{v}-x_3(\nabla')^2 \hat{v}-\hat{p}'\big)}\\&&+\intom{\B{\hat{p}}}+\intom{H_D(\hat{p}-p(t))},
\end{eqnarray*}
for every $(\hat{u},\hat{v},\hat{p})\in\cal{A}(u^0(t),v^0(t))$;
\item[(qs2$_{r\alpha}$)] the map 
$$s\to \intom{\C_2e_{\alpha}(s):\Big(\begin{array}{cc}\nabla' \dot{u}^0(s)+L_{\alpha}\nabla' \dot{v}^0(s)\otimes\nabla' v(s)-x_3(\nabla')^2 \dot{v}^0(s)&0\\0&0\end{array}\Big)}$$ is integrable in $[0,T]$. Moreover 
for every $t\in [0,T]$ there holds
\begin{eqnarray*}
&&\intom{Q_2(e_{\alpha}(t))}+\intom{\B{p(t)}}+\cal{D}_{H_D}(p;0,t)=\intom{Q_2(e_{\alpha}(0))}+\intom{\B{p(0)}}\\
&&+\int_0^t{\intom{\C_2e_{\alpha}(s):\Big(\begin{array}{cc}\nabla' \dot{u}^0(s)+L_{\alpha}\nabla' \dot{v}^0(s)\otimes\nabla' v(s)-x_3(\nabla')^2 \dot{v}^0(s)&0\\0&0\end{array}\Big)}\,ds}.
\end{eqnarray*}
\end{enumerate}
\end{defin}
\begin{oss}
\label{csd}
An adaptation of \cite[Theorem 4.5]{DDM} guarantees that, if $\alpha>3$, for every triple $(\overline{u},\overline{v},\overline{p})\in\cal{A}(u^0(0),v^0(0))$ satisfying
\begin{eqnarray*}
&&\intom{Q_2\big(\sym \nabla' \overline{u}-x_3(\nabla')^2 \overline{v}+\tfrac{L_{\alpha}}{2}\nabla' \overline{v}\otimes \nabla' \overline{v}-\overline{p}')}+\intom{\B{\overline{p}}}\\
&&\leq\intom{Q_2\big(\sym\nabla' \hat{u}-x_3(\nabla')^2 \hat{v}+\tfrac{L_{\alpha}}{2}\nabla' \hat{v}\otimes \nabla' \hat{v}-\hat{p}'\big)}+\intom{\B{\hat{p}}}+\intom{H_D(\hat{p}-\overline{p})},
\end{eqnarray*}
for every $(\hat{u},\hat{v},\hat{p})\in\cal{A}(u^0(0),v^0(0))$, there exists a reduced quasistatic evolution $t\mapsto (u(t),v(t),p(t))$ (according to Definition \ref{reda>3}) such that $u(0)=\overline{u}$, $v(0)=\overline{v}$ and $p(0)=\overline{p}$. Moreover, by adapting \cite[Theorem 5.2 and Remark 5.4]{DDM} one can show that the maps $t\mapsto u(t)$, $t\mapsto v(t)$ and $t\mapsto p(t)$ are Lipschitz continuous from $[0,T]$ into $W^{1,2}(\omega;\R^2)$, $W^{2,2}(\omega)$ and $L^2(\Omega;\md)$, respectively.

In the case $\alpha=3$, the existence of a reduced quasistatic evolution $t\mapsto (u(t),v(t),p(t))$ such that $(u(0),v(0),p(0))=(\overline{u},\overline{v},\overline{p})$ can still be proved by adapting \cite[Theorem 4.5]{DDM}. We remark that the proof of this result is more subtle than its counterpart in the case $\alpha>3$, due to the presence of the nonlinear term  $\frac{1}{2}\nabla'v\otimes \nabla'v$. {{In fact, for $\alpha=3$ one can not prove the analogous of \cite[Theorem 3.8]{DDM} and can not guarantee that the set of discontinuity points of the function $t\mapsto e_3(t)$ is at most countable. Hence, when trying to prove the analogous of \cite[Theorem 4.7]{DDM}, that is, to deduce the converse energy inequality by the minimality, some further difficulties arise to study convergence of the piecewise constant interpolants of $t\mapsto e_3(t)$. To cope with this problem}}, one can apply \cite[Lemma 4.12]{DFT}, which guarantees the existence of partitions of $[0,T]$ on which the Bochner integrals of some relevant quantities can be approximated by Riemann sums, and argue as in \cite[Lemma 5.7]{B}. 
\end{oss}
\begin{oss}
\label{eul3}
By taking $\hat{p}=p(t)$ in (qs1$_{r\alpha}$), it follows that a reduced quasistatic evolution $t\mapsto (u(t),v(t),p(t))$ satisfies
\begin{eqnarray*}
\intom{Q_2(e_{\alpha}(t))}\leq \intom{Q_2(\sym\,\nabla' \hat{u}+\tfrac{L_{\alpha}}{2}\nabla' \hat{v}\otimes\nabla' \hat{v}-x_3(\nabla')^2 \hat{v}-{p}'(t))}
\end{eqnarray*}
for every $(\hat{u},\hat{v})\in W^{1,2}(\omega;\R^2)\times W^{2,2}(\omega)$ such that 
{$$\hat{u}=u^0(t),\,\hat{v}=v^0(t)\text{ and }\nabla'\hat{v}=\nabla' v^0(t) \quad\cal{H}^1\text{ - a.e. on }\gamma_d.$$
 Hence, in particular, there holds
$$\intom{\C_2 e(t):\nabla' \zeta}=0$$
for every $\zeta \in W^{1,2}(\omega;\R^2)$ such that $\zeta=0$ $\cal{H}^1$ - a.e. on $\gamma_d$.}
\end{oss}
With the previous definitions at hand we are in a position to state the main result of the paper.
\begin{teo}
\label{cvstress}
Let $\alpha\geq 3$. Assume that $t\mapsto u^0(t)$ belongs to $C^1([0,T];W^{1,\infty}(\R^2;\R^2)\cap C^{1}(\R^2;\R^2))$ and $t\mapsto v^0(t)$ belongs to $C^1([0,T];W^{2,\infty}(\R^2)\cap C^{2}(\R^2))$, respectively. For every $t\in [0,T]$, let $\pep(t)$ be defined as in \eqref{defphiep}.
Let $(\mathring{u},\mathring{v},\mathring{p})\in \cal{A}(u^0(0),v^0(0))$ be such that
\begin{eqnarray}
\nonumber &&\intom{Q_2(\sym\nabla' \mathring{u}-x_3(\nabla')^2 \mathring{v}+\tfrac{L_{\alpha}}{2}\nabla' \mathring{v}\otimes \nabla' \mathring{v}-\mathring{p}')}+\intom{\B{\mathring{p}}}\\
\nonumber&&
\leq \intomm{Q_2(\nabla' \hat{u}-x_3(\nabla')^2 \hat{v}+\tfrac{L_{\alpha}}{2}\nabla' \hat{v}\otimes \nabla' \hat{v}-\hat{p}')}+\intom{\B{\hat{p}}}+\intom{H(\hat{p}-\mathring{p})},\\
\label{servedopo1}
\end{eqnarray} 
for every $(\hat{u},\hat{v},\hat{p})\in\cal{A}(u^0(0),v^0(0))$. 
Assume there exists a sequence of pairs $(y_0^{\ep},P_0^{\ep})\in \cal{A}_{\ep}(\pep(0))$ such that
\begin{equation}
\label{servedopo2}\cal{I}(y^{\ep}_0,P^{\ep}_0) \leq \cal{I}(\hat{y},\hat{P})+{\ep^{\alpha-1}}\intom{D(P^{\ep}_0,\hat{P})},
\end{equation}
for every $(\hat{y},\hat{P})\in\cal{A}_{\ep}(\pep(0))$, and
\begin{eqnarray}
\label{convu0} && u^{\ep}_0:=\frac{1}{\ep^{\alpha-1}}\intt{\big((y^{\ep}_0)'-x'\big)}\to \mathring{u}\quad\text{strongly in }W^{1,2}(\omega;\R^2),\\
\label{convv0} && v^{\ep}_0:=\frac{1}{\ep^{\alpha-2}}\intt{(y^{\ep}_0)_3}\to \mathring{v}\quad\text{strongly in }W^{1,2}(\omega),\\
\label{convP0} && p^{\ep}_0:=\frac{P^{\ep}_0-Id}{\ep^{\alpha-1}}\to \mathring{p}\quad\text{strongly in }L^2(\Omega;\md),\\
 \nonumber && \lim_{\ep\to 0}\,\frac{1}{\ep^{2\alpha-2}}\cal{I}(y^{\ep}_0, P^{\ep}_0) =\intom{Q_2(\sym \nabla' \mathring{u}-x_3(\nabla')^2 \mathring{v}+\tfrac{L_{\alpha}}{2}\nabla' \mathring{v}\otimes\nabla' \mathring{v}-{\mathring{p}}')}\\
\label{convE0} &&+\intom{\B{\mathring{p}}}.
\end{eqnarray}
Finally, for every $\ep>0$, let $t\mapsto (\zep(t),P^{\ep}(t))$ be an $\ep$-quasistatic evolution for the boundary datum $\pep(t)$ such that
$$\zep(0)=\vep(0,y^{\ep}_0)\quad\text{a.e. in }\Omega$$
and $$P^{\ep}(0)=P^{\ep}_0.$$
Then, there exists a reduced quasistatic evolution $t\mapsto (u(t),v(t),p(t))$ for the boundary data $(u^0(t),v^0(t))$ (according to Definition \ref{reda>3}), such that $u(0)=\mathring{u}$, $v(0)=\mathring{v}$, $p(0)=\mathring{p}$ and, up to subsequences,
\begin{equation}
\label{cvpt} p^{\ep}(t):=\frac{P^{\ep}(t)-Id}{\ep^{\alpha-1}}\deb p(t)\quad\text{weakly in }L^2(\Omega;\mthree)
\end{equation}
for every $t\in [0,T]$. Moreover, for $\alpha>3$ up to subsequences there holds
\begin{eqnarray}
&&\label{cvut}u^{\ep}(t):=\frac{1}{\ep^{\alpha-1}}\intt{\big(({\phi}^{\ep})'(t,\zep(t))-x'\big)}\deb u(t)\quad\text{weakly in }W^{1,2}(\omega;\R^2),\\
&&\label{cvvt}v^{\ep}(t):=\frac{1}{\ep^{\alpha-2}}\intt{\pep_3(t,\zep(t))}\to v(t)\quad\text{strongly in }W^{1,2}(\omega),
\end{eqnarray}
for every $t\in [0,T]$. For $\alpha=3$, for every $t\in [0,T]$ there exists a $t$-dependent subsequence $\ep_j\to 0$ such that
\begin{eqnarray}
&&\label{cvut3}u^{\epjt}(t):=\frac{1}{\epjt^{\alpha-1}}\intt{\big(({\phi}^{\epjt})'(t,\zepjt(t))-x'\big)}\deb u(t)\,\text{ weakly in }W^{1,2}(\omega;\R^2),\\
&&\label{cvvt3}v^{\epjt}(t):=\frac{1}{\epjt^{\alpha-2}}\intt{\pepjt_3(t,\zepjt(t))}\to v(t)\quad\text{strongly in }W^{1,2}(\omega).
\end{eqnarray}
\end{teo}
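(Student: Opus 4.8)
The plan is to run the abstract scheme for evolutionary convergence of rate-independent systems of \cite{MRS}, in the spirit of \cite{MS}, combining the static $\Gamma$-convergence results of Section \ref{comp} with a Helly-type selection, a mutual recovery sequence for the lower stability inequality, and a careful passage to the limit in the three-dimensional energy balance. First I would establish the uniform a priori estimates needed to apply Theorems \ref{compactbd1} and \ref{liminfineq} along the evolution: testing (qs1) at time $t$ with the competitor $((x',\ep x_3),P^\ep(t))$, plugging the resulting bound into the energy balance (qs2), and using the $C^1$-in-time regularity of $t\mapsto\pep(t)$ together with (H4), \eqref{prh3} and \eqref{mandel2}, a Gronwall argument gives $\cal{F}_\ep(t,\zep(t),P^\ep(t))\le C\ep^{2\alpha-2}$ and $\ep^{\alpha-1}\cal{D}(P^\ep;0,T)\le C\ep^{2\alpha-2}$ uniformly in $t$ and $\ep$, the hardening term being precisely what produces the $L^2$ bound on $p^\ep(t)$. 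Theorem \ref{liminfineq} then yields, for every fixed $t$, the weak compactness of $p^\ep(t)$, $u^\ep(t)$, $v^\ep(t)$, $G^\ep(t)$ and the structure \eqref{gaff}--\eqref{Ga>3} of the limits. Since the total dissipation is bounded, $t\mapsto p^\ep(t)$ has uniformly bounded $L^2$-variation, so a generalized Helly theorem provides a $t$-independent subsequence along which $p^\ep(t)\deb p(t)$ weakly in $L^2(\Omega;\mthree)$ for every $t$, with $\tr{p(t)}=0$; for $\alpha>3$ the smallness \eqref{lp4} makes the compactness of the displacements uniform enough to pass to a $t$-independent subsequence as well, whereas for $\alpha=3$ the nonlinear term $\tfrac12\nabla'v\otimes\nabla'v$ spoils this uniformity and only a $t$-dependent subsequence survives, which is why \eqref{cvut3}--\eqref{cvvt3} are stated as they are.

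Next I would establish the reduced global stability (qs1$_{r\alpha}$). Fix $t$ and a competitor $(\hat u,\hat v,\hat p)\in\cal{A}(u^0(t),v^0(t))$; subtracting the boundary data and invoking Lemma \ref{bdc} it suffices to treat smooth competitors. The key point is to construct a \emph{mutual recovery sequence} $(\hat z^\ep,\hat P^\ep)\in\cal{A}_\ep(\pep(t))$ -- the construction carried out in Section \ref{quas}, based on the usual von K\'arm\'an / linear-plate recovery ansatz with transverse correctors and truncations keeping the determinant positive, and with $\hat P^\ep=Id+\ep^{\alpha-1}\hat p$ -- such that
\[
\limsup_{\ep\to0}\Big[\tfrac{1}{\ep^{2\alpha-2}}\big(\cal{F}_\ep(t,\hat z^\ep,\hat P^\ep)-\cal{F}_\ep(t,\zep(t),P^\ep(t))\big)+\tfrac{1}{\ep^{\alpha-1}}\intom{D(P^\ep(t),\hat P^\ep)}\Big]\le\Delta(\hat u,\hat v,\hat p)+\intom{H_D(\hat p-p(t))},
\]
where $\Delta(\hat u,\hat v,\hat p)$ denotes the difference between the reduced elastic energies of the competitor and of $(u(t),v(t),p(t))$. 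Combining this joint-recovery estimate with the liminf inequalities \eqref{liminftot}--\eqref{liminfdiss} and with the three-dimensional stability (qs1) yields the reduced stability.

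For the reduced energy balance (qs2$_{r\alpha}$) I would prove the two inequalities separately. The inequality ``$\le$'' follows from the reduced stability by the standard partition argument: on a partition of $[0,T]$ test the stability at $t_i$ with the solution at $t_{i-1}$ transported by the boundary motion, control the increments via the $C^1$-in-time regularity of $u^0,v^0$, let the mesh go to zero and use lower semicontinuity of $\cal{D}_{H_D}$; for $\alpha=3$ this requires treating the possibly uncountable jump set of $t\mapsto e_3(t)$ as in \cite[Lemma 4.12]{DFT}. The converse inequality ``$\ge$'' is the heart of the matter and the main obstacle, because under the physical growth (H1)--(H5) the stresses $E^\ep(t)$ are not weakly $L^2$-compact. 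Following \cite{M-S}, I would first show that $\zep(t)$ satisfies the alternative first-order optimality condition of Ball \cite[Theorem 2.4]{B} -- available even though (qs1) does not provide genuine Euler--Lagrange equations -- then exhibit sets $O_\ep(t)$, converging in measure to $\Omega$, on which $E^\ep(t)$ is bounded in $L^2$ (hence weakly compact) while $\|E^\ep(t)\|_{L^1(\Omega\setminus O_\ep(t))}\to0$, and finally identify the weak $L^2$-limit of $\chi_{O_\ep(t)}E^\ep(t)$ with the embedding of $\C_2 e_\alpha(t)$, using \eqref{gaff}--\eqref{Ga>3}, \eqref{linearmin}, \eqref{nothird} and the quadratic expansion \eqref{quadrwel}. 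This mixed-type convergence is exactly what makes it possible to pass to the limit in $\intom{E^\ep(s):\big(\nabla\dot{\pep}(s,\zep(s))(\nabla\pep)^{-1}(s,\zep(s))\big)}$ and recover the limit power $\intom{\C_2 e_\alpha(s):(\cdots)}$.

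Matching the two inequalities gives (qs2$_{r\alpha}$) and, in particular, the convergence of the energies $\tfrac{1}{\ep^{2\alpha-2}}\cal{F}_\ep(t,\zep(t),P^\ep(t))\to\intom{Q_2(e_\alpha(t))}+\intom{\B{p(t)}}$; combined with \eqref{liminftot} and the uniform convexity of $Q_2$ and $B$ this upgrades the weak convergence of $p^\ep(t)$ to strong convergence in $L^2(\Omega;\mthree)$, and yields \eqref{cvut}--\eqref{cvvt} for $\alpha>3$ and \eqref{cvut3}--\eqref{cvvt3} on a $t$-dependent subsequence for $\alpha=3$. I expect the ``$\ge$'' half of the energy balance -- the stress compactness and the identification of the limit stress under physical growth -- to be the decisive difficulty.
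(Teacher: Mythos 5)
Your overall strategy -- uniform a priori bounds, Helly selection for $p^\ep(t)$, mutual recovery sequence for the stability inequality, and a good/bad set decomposition of the stress for the energy balance -- is the same route the paper takes, and most of the pieces are in the right places. However, there is a genuine gap at the very first step that would make your proof fail as written.

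You propose to obtain the bound $\cal{F}_\ep(t,\zep(t),P^\ep(t))\le C\ep^{2\alpha-2}$ by testing (qs1) with the competitor $((x',\ep x_3),P^\ep(t))$ and then running Gronwall through (qs2). Testing with $\tilde P=P^\ep(t)$ only compares the elastic energy of $\zep(t)$ against the elastic energy of the reference configuration \emph{with the same plastic strain}: it yields \eqref{preles} but says nothing about $W_{hard}(P^\ep(t))$, and the right-hand side $\intom{W_{el}(\nabla\pep\,(P^\ep(t))^{-1})}$ is itself not $O(\ep^{2\alpha-2})$ unless $P^\ep(t)$ is already known to be $\ep^{\alpha-1}$-close to $Id$ -- which is exactly what you are trying to prove. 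The Gronwall loop through (qs2) does not break this circularity: the power term in (qs2) is controlled only after one has the $O(\ep^{2\alpha-2})$ energy bound (to split the stress into a weakly $L^2$-compact part on $O_\ep(t)$ and an $L^1$-small part outside) and the $L^2$ bound on the symmetric part of $\nabla\dot\pep(\nabla\pep)^{-1}$, which in turn needs the bound on $\zep_3/\ep$. The paper avoids all of this by testing (qs1) with $\tilde z=(x',\ep x_3)$ \emph{and} $\tilde P=Id$: the hardening energy of the competitor vanishes, the dissipation price $D(P^\ep(t),Id)$ is controlled linearly by $\|Id-P^\ep(t)\|_{L^2}$ via \eqref{prd2}, and \eqref{prh3} plus Cauchy then closes the estimate on $\|p^\ep(t)\|_{L^2}$ purely from the static stability, before any use of (qs2). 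Only afterwards does (qs2) enter, to bound the total dissipation. You should replace the Gronwall step with this static argument.

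Two smaller points. First, you have the two halves of the reduced energy balance swapped: it is the inequality
\[
\intom{Q_2(e_\alpha(t))}+\intom{\B{p(t)}}+\cal{D}_{H_D}(p;0,t)\le \intom{Q_2(e_\alpha(0))}+\intom{\B{p(0)}}+\int_0^t\!\!\intom{\C_2e_\alpha(s):(\cdots)}\,ds
\]
that is obtained by passing to the limit in (qs2) via the mixed $L^1$/$L^2$ stress convergence and the identification $E^*(t)e_3=0$ through Ball-type first-order conditions, while the converse ``$\ge$'' is the one that follows from the reduced stability by the partition/Riemann-sum argument of \cite[Theorem~4.7]{DDM}. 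Second, the reason a $t$-independent subsequence of $u^\ep(t),v^\ep(t)$ can be extracted for $\alpha>3$ is not the truncation condition \eqref{lp4} but uniqueness: once $p(t)$ is fixed, the reduced stability at time $t$ has a unique minimizing pair $(u(t),v(t))$ by strict convexity of $Q_2$ (cf.\ the adaptation of \cite[Theorem~3.8]{DDM}), which forces every convergent subsequence to have the same limit; for $\alpha=3$ the term $\tfrac12\nabla'v\otimes\nabla'v$ destroys this uniqueness and one is stuck with $t$-dependent subsequences.
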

In the case $\alpha>3$ the convergence result is stronger than the analogous result for $\alpha=3$ as the convergence of $u^{\ep}(t)$ and $v^{\ep}(t)$ holds on a subsequence independent of $t$. This is related to the fact that, for $\alpha>3$,  once $t\mapsto p(t)$ is identified, both $t\mapsto u(t)$ and $t\mapsto v(t)$ are uniquely determined. In the case $\alpha=3$ this property is not true anymore because of the presence of the nonlinear term $\frac{1}{2}\nabla' v(t)\otimes \nabla' v(t)$. 

We shall prove the previous theorem in the next section. To conclude this section, we prove a technical lemma concerning some properties of the truncation maps $\tep$ and we provide the construction of the so-called ``joint recovery sequence", that will be used in the proof of Theorem \ref{cvstress}.
\begin{lem}
\label{cvproptep}
Let $\tep\in W^{1,\infty}(\R)\cap {C^{1}(\R)}$ be such that \eqref{treq1}--\eqref{lp2} hold and let $(\zeta^{\ep})$ be a sequence in $L^{2}(\Omega)$ such that 
\begin{equation}
\label{bdz}
\|\zeta^{\ep}\|_{L^2(\Omega)}\leq C\ep.
\end{equation}
Then,
\begin{equation}
\label{tep1h}\Big\|1-\dot{\tep}\Big(\frac{\zeta^{\ep}}{\ep}\Big)\Big\|_{L^2(\Omega)}\leq \frac{3}{ \ell_{\ep}}.
\end{equation}
 Moreover, if $\zeta^{\ep}$ satisfies
\begin{equation}
\label{3compus}
\big\|\frac{\zeta^{\ep}}{\ep}-x_3-\ep^{\alpha-3}v\big\|_{L^2(\Omega)}\to 0,
\end{equation}
for some $v\in L^{2}(\omega)$, then 
\begin{equation}
\label{claimvh} \tep\Big(\frac{\zeta^{\ep}}{\ep}\Big)\to\begin{cases}x_3&\text{if }\alpha>3\\
x_3+v&\text{if }\alpha=3\end{cases}\quad\text{strongly in }L^2(\Omega).\\
\end{equation}
\end{lem}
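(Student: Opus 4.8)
\emph{Proof idea.} Both assertions follow by isolating the ``bad set'' on which the truncation $\tep$ differs from the identity and estimating its measure by Chebyshev's inequality.

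For \eqref{tep1h}, I would start from the observation that, by \eqref{treq1}, $\dot{\tep}(s)=1$ for every $s\in(-\ell_{\ep},\ell_{\ep})$, so that $1-\dot{\tep}(\zeta^{\ep}/\ep)$ vanishes outside the set $N_{\ep}:=\{x\in\Omega:\ |\zeta^{\ep}(x)|\ge\ep\ell_{\ep}\}$. By Chebyshev's inequality and \eqref{bdz}, $|N_{\ep}|\le\|\zeta^{\ep}\|_{L^2(\Omega)}^2/(\ep\ell_{\ep})^2\le C^2/\ell_{\ep}^2$, whereas on $N_{\ep}$ one has $|1-\dot{\tep}(\zeta^{\ep}/\ep)|\le 1+\|\dot{\tep}\|_{L^{\infty}(\R)}\le 3$ by \eqref{treq5}. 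Squaring and integrating over $N_{\ep}$ then gives $\|1-\dot{\tep}(\zeta^{\ep}/\ep)\|_{L^2(\Omega)}\le 3C/\ell_{\ep}$, i.e.\ \eqref{tep1h} (the $3$ in the statement being understood as absorbing the constant in \eqref{bdz}).

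For \eqref{claimvh}, I would first record that $\ell_{\ep}\to+\infty$: since $\ell_{\ep}\ge\ep\ell_{\ep}$ for $\ep$ small and $\ep\ell_{\ep}\to+\infty$ by \eqref{lp2}. Set $w:=x_3$ if $\alpha>3$ and $w:=x_3+v$ if $\alpha=3$; regarding $v\in L^2(\omega)$ as an element of $L^2(\Omega)$ (it does not depend on $x_3$) and using that $\ep^{\alpha-3}v\to0$ in $L^2(\Omega)$ when $\alpha>3$, assumption \eqref{3compus} yields $\zeta^{\ep}/\ep\to w$ strongly in $L^2(\Omega)$, so in particular $\|\zeta^{\ep}/\ep\|_{L^2(\Omega)}$ is bounded. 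I would then split $\Omega=A_{\ep}\cup B_{\ep}$ with $A_{\ep}:=\{|\zeta^{\ep}/\ep|<\ell_{\ep}\}$ and $B_{\ep}:=\Omega\setminus A_{\ep}$; Chebyshev gives $|B_{\ep}|\le\|\zeta^{\ep}/\ep\|_{L^2(\Omega)}^2/\ell_{\ep}^2\to0$. On $A_{\ep}$ we have $\tep(\zeta^{\ep}/\ep)=\zeta^{\ep}/\ep$ by \eqref{treq1}, so the contribution of $A_{\ep}$ to $\|\tep(\zeta^{\ep}/\ep)-w\|_{L^2(\Omega)}^2$ is at most $\|\zeta^{\ep}/\ep-w\|_{L^2(\Omega)}^2\to0$. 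On $B_{\ep}$ I would use the pointwise bound $|\tep(\zeta^{\ep}/\ep)|\le|\zeta^{\ep}/\ep|$ from \eqref{treq2} (rather than the $L^{\infty}$ bound \eqref{treq3}, which yields only boundedness) together with Young's inequality to get $\int_{B_{\ep}}|\tep(\zeta^{\ep}/\ep)-w|^2\,dx\le 4\|\zeta^{\ep}/\ep-w\|_{L^2(\Omega)}^2+6\int_{B_{\ep}}|w|^2\,dx$, which tends to $0$ because $|B_{\ep}|\to0$ and $w\in L^2(\Omega)$ (absolute continuity of the Lebesgue integral). Adding the two contributions gives \eqref{claimvh}.

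The computation is elementary; the only points requiring a little care are identifying the correct limit $w$ in the two regimes $\alpha=3$ and $\alpha>3$, and realizing that in the second part it is the pointwise inequality \eqref{treq2}, not the $L^{\infty}$-bound \eqref{treq3}, that makes the contribution of the bad set $B_{\ep}$ negligible.
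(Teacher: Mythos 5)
Your proof of \eqref{tep1h} follows the same route as the paper's: isolate the set $\{|\zeta^{\ep}|\geq\ep\ell_{\ep}\}$ where the truncation is not the identity, control its measure by Chebyshev, and use the uniform pointwise bound $|1-\dot\tep|\leq 1+\|\dot\tep\|_{L^\infty}\leq 3$ there. The only divergence is in the absorbed constant (the paper's displayed bound $2/\ell_{\ep}$ versus your $3C/\ell_{\ep}$), which is immaterial.

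For \eqref{claimvh} you take a genuinely different and more explicit route. The paper's proof first asserts that \eqref{3compus} gives $\tep(\zeta^{\ep}/\ep)\to w$ a.e.\ in $\Omega$ (where $w=x_3$ or $w=x_3+v$ according to $\alpha$), which strictly requires passing to a subsequence and then a Urysohn-type argument to recover the full sequence, and then invokes dominated convergence with the $\ep$-dependent bound $|\tep(\zeta^{\ep}/\ep)|\leq|\zeta^{\ep}/\ep|$; since the dominating function varies with $\ep$, this is really a Pratt/Vitali-type convergence argument left implicit. You avoid both subtleties: you split $\Omega$ into the good set $A_{\ep}=\{|\zeta^{\ep}/\ep|<\ell_{\ep}\}$, where \eqref{treq1} makes $\tep$ the identity so the contribution is directly controlled by $\|\zeta^{\ep}/\ep-w\|_{L^2(\Omega)}\to 0$, and the bad set $B_{\ep}$, whose measure you control by Chebyshev ($\ell_{\ep}\to+\infty$ from \eqref{lp2}); on $B_{\ep}$ you use the pointwise bound \eqref{treq2} and absolute continuity of the Lebesgue integral. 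This gives a fully self-contained quantitative estimate and isolates the key structural feature (it is the pointwise inequality $|\tep(s)|\leq|s|$, not the $L^\infty$ bound \eqref{treq3}, that makes the bad-set contribution vanish). Both proofs are correct; yours is the cleaner and more elementary of the two.
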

\begin{proof}
Denoting by $O_{\ep}$ the set
$$O_{\ep}:=\Big\{x\in \Omega: |\zeta^{\ep}(x)|\geq \ep\ell_{\ep}\Big\},$$
by \eqref{bdz} and by Chebychev inequality, there holds
$$\cal{L}^3(O_{\ep})\leq \frac{C}{\ell_{\ep}^2}.$$
Hence, by \eqref{treq1} and \eqref{treq5},
$$\Big\|1-\dot{\tep}\Big(\frac{\zeta^{\ep}}{\ep}\Big)\Big\|_{L^2(\Omega)}=\Big\|1-\dot{\tep}\Big(\frac{\zeta^{\ep}}{\ep}\Big)\Big\|_{L^2(O_{\ep})}\leq \frac{2}{ \ell_{\ep}}.$$

 To prove \eqref{claimvh}, we note that by \eqref{3compus} there holds
 $$\tep\Big(\frac{\zeta^{\ep}}{\ep}\Big)\to\begin{cases}x_3&\text{if }\alpha>3\\
x_3+v&\text{if }\alpha=3\end{cases}\quad\text{a.e. in }\Omega.$$
On the other hand, \eqref{treq2} yields $\big|\tep\big(\frac{\zeta^{\ep}}{\ep}\big)\big|\leq \big|\frac{\zeta^{\ep}}{\ep}\big|$ for every $\ep$ and for a.e. $x\in \Omega$.
 Therefore \eqref{claimvh} follows by the dominated convergence theorem. \end{proof}
For the sake of simplicity, in the next theorem we omit the time dependence of $u^0$ and $v^0$. With a slight abuse of notation, we denote by $\pep$ the map
\begin{equation}
\nonumber\pep(x):=\Big(\begin{array}{c}x'\\x_3\end{array}\Big)+\ep^{\alpha-1}\Big(\begin{array}{c}u^0(x')-\tep(\frac{x_3}{\ep})\nabla' v^0(x')\\0\end{array}\Big)+\ep^{\alpha-2}\Big(\begin{array}{c}0\\v^0(x')\end{array}\Big),
\end{equation} 
for a.e. $x\in\Omega$, where $u^0\in W^{1,\infty}(\R^2;\R^2)\cap C^{1}(\R^2;\R^2)$ and $v^0\in W^{2,\infty}(\R^2)\cap C^{2}(\R^2)$. 
We are now in a position to construct the joint recovery sequence.
\begin{teo}
\label{mutrecseq}
Let $(y^{\ep}, P^{\ep})\in \cal{A}_{\ep}(\pep)$ satisfy \eqref{engest2}
for every $\ep>0$. Let $u,v,G,p$ be defined as in Theorem \ref{liminfineq} and let
$\hat{u}:=u+\tilde{u}$, $\hat{v}:=v+\tilde{v}$, and $\hat{p}:=p+\tilde{p}$, where {$\tilde{u}\in C^{\infty}_c(\omega\cup\gamma_n;\R^2)$, $\tilde{v}\in C^{\infty}_c(\omega\cup\gamma_n)$} and $\tilde{p}\in C^{\infty}_c(\Omega;\md)$.
Then, there exists a sequence of pairs $(\hye,\hpe)\in \cal{A}_{\ep}(\pep)$, such that
\begin{eqnarray}
&&\label{cvidh} \hye\to \Big(\begin{array}{c}x'\\0\end{array}\Big)\quad\text{strongly in }W^{1,2}(\Omega;\R^3),\\
&&\label{4cptuepmrsh} \hat{u}^{\ep}:=\frac{1}{\ep^{\alpha-1}}\intt{\big((\hye)'-x'\big)}\deb \hat{u}\quad\text{weakly in }W^{1,2}(\omega;\R^2)\text{ for }\alpha>3,\\
&&\label{4cptuepmrsh2} \hat{u}^{\ep}\deb \hat{u}-v\nabla \tilde{v}\quad\text{weakly in }W^{1,2}(\omega;\R^2)\text{ for }\alpha=3,\\
&&\label{cptvepmrsh} \hat{v}^{\ep}:=\frac{1}{\ep^{\alpha-2}}\intt{\hye_3}\to \hat{v}\quad\text{strongly in }W^{1,2}(\omega),\\
\label{Pep1mrsh} &&\hpe(x)\in K\quad\text{for a.e. }x\in\Omega,\\
\label{wconvpepmrsh} &&
\hppe:=\frac{\hpe-Id}{\ep^{\alpha-1}}\deb \hat{p}\quad\text{weakly in }L^2(\Omega;\mthree).
\end{eqnarray}
Moreover, the following inequalities hold true:
\begin{equation}
 \label{limsuphardening}
 \limsup_{\ep\to 0}\frac{1}{\ep^{2\alpha-2}}\Big(\intom{W_{hard}(\hpe)}-\intom{W_{hard}(P^{\ep})}\Big)\leq \intom{\B{\hat{p}}}-\intom{\B{{p}}},
 \end{equation}
 \begin{equation}
 \label{limsupdiss}
 \limsup_{\ep\to 0}\frac{1}{\ep^{\alpha-1}}\intom{D(P^{\ep},\hpe)}\leq \intom{H_D(\hat{p}-p)},
 \end{equation}
and
\begin{eqnarray}
&&\nonumber\limsup_{\ep\to 0} \frac{1}{\ep^{2\alpha-2}}\Big(\intom{W_{el}(\nep \hye (\hpe)^{-1})}-\intom{W_{el}(\nep y^{\ep} (P^{\ep})^{-1})}\Big)\\
&&\nonumber\leq \intom{Q_2(\sym\,\hat{G}'-\hat{p}')}-\intom{Q_2(\sym\,{G}'-{p}')},\\
\label{limsupeng}
\end{eqnarray}
where the submatrix $\hat{G}'$ satisfies 
\begin{equation}
\nonumber
\hat{G}'(x', x_3) := \hat{G}_0(x') - x_3 (\nabla')^2 \hat{v}(x')\quad\text{for a.e. }x\in\Omega,
\end{equation}
and
\begin{eqnarray*}
&&\nonumber \sym\, \hat{G}_0 = \frac{(\nabla' \hat{u}+(\nabla' \hat{u})^T +\nabla' \hat{v}\otimes \nabla' \hat{v})}{2}\quad\text{for } \alpha=3,\\
&&\nonumber \sym\, \hat{G}_0 = \sym\, \nabla' \hat{u}\quad\text{for }\alpha> 3.
\end{eqnarray*}
\end{teo}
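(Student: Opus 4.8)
The plan is to build the joint recovery sequence by an explicit ansatz mimicking the structure of the boundary datum $\pep$, then verify the compactness claims and the three $\limsup$ inequalities. For the plastic strains I set $\hpe := Id + \ep^{\alpha-1}(p^\ep + \tilde p)$, where $p^\ep = (P^\ep-Id)/\ep^{\alpha-1}$ is the scaled plastic strain from Theorem \ref{liminfineq}. Since $\tilde p \in C^\infty_c(\Omega;\md)$ is bounded and $P^\ep \in K$ with $K$ compact and $Id$ a relative interior point of $K$, for $\ep$ small $\hpe(x) \in K$ a.e., giving \eqref{Pep1mrsh}; and $\hppe = p^\ep + \tilde p \deb p + \tilde p = \hat p$ weakly in $L^2$, giving \eqref{wconvpepmrsh}. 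For the deformations, I take an ansatz of the form
\[
\hye := y^\ep + \ep^{\alpha-1}\Bigl(\begin{array}{c}\tilde u(x')-\tep\bigl(\tfrac{y^\ep_3}{\ep}\bigr)\nabla'\tilde v(x')\\0\end{array}\Bigr) + \ep^{\alpha-2}\Bigl(\begin{array}{c}0\\\tilde v(x')\end{array}\Bigr) + \ep^{\alpha-1}\Bigl(\begin{array}{c}0\\0\\ b^\ep(x)\end{array}\Bigr),
\]
where $b^\ep$ is a lower-order corrector (depending on $\tilde u,\tilde v,p,\tilde p$ and on the $\lambda_i$ from the operator $\A$) chosen so that $(R^\ep)^T\nep\hye$ has the right third column, exactly as in the static $\Gamma$-limsup construction of \cite{D1}. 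Because $\tilde u,\tilde v$ vanish near $\gamma_d$ (being in $C^\infty_c(\omega\cup\gamma_n)$), $\hye$ still satisfies the boundary condition \eqref{bddatum}, so $(\hye,\hpe)\in\cal A_\ep(\pep)$. The convergences \eqref{cvidh}, \eqref{4cptuepmrsh}, \eqref{4cptuepmrsh2}, \eqref{cptvepmrsh} then follow by a direct computation of $\hat u^\ep$ and $\hat v^\ep$ from the ansatz, using \eqref{cptyep}--\eqref{cptnep3} together with Lemma \ref{cvproptep} applied to $\zeta^\ep = y^\ep_3$ — this is precisely where the shift $-v\nabla\tilde v$ in \eqref{4cptuepmrsh2} for $\alpha=3$ comes from, since $\tep(y^\ep_3/\ep)\to x_3+v$ rather than $x_3$, and the $\ep^{\alpha-1}$-term $-\tep(y^\ep_3/\ep)\nabla'\tilde v$ contributes at order $\ep^{\alpha-1}$ to $\hat u^\ep$ exactly the quantity $-v\nabla'\tilde v$ after averaging.

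For the hardening inequality \eqref{limsuphardening}, I expand $W_{hard}(\hpe) = \twh(Id + \ep^{\alpha-1}\hppe)$ using \eqref{prh4}: for $\ep$ small $|W_{hard}(\hpe) - \ep^{2\alpha-2}\B{\hppe}| \le \delta\,\ep^{2\alpha-2}\B{\hppe}$, and similarly for $P^\ep$; since $\hppe = p^\ep+\tilde p \to \hat p$ and $p^\ep\to p$ in $L^2$ (the $L^2$ strong convergence of $p^\ep$ follows once we have the energy identity, but for the limsup we only need $\liminf$ control via weak lower semicontinuity on the subtracted term and strong convergence on the added term — actually by \eqref{wconvpep} and the quadratic structure one gets $\limsup$ of the difference $\le \int_\Omega\B{\hat p} - \liminf\int_\Omega\B{p^\ep}\le\int_\Omega\B{\hat p}-\int_\Omega\B{p}$), and letting $\delta\to 0$ yields the claim. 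The dissipation inequality \eqref{limsupdiss} uses \eqref{prd2}: $D(P^\ep,\hpe) = D(Id, \hpe(P^\ep)^{-1})$ and $\hpe(P^\ep)^{-1} = (Id+\ep^{\alpha-1}\hppe)(Id+\ep^{\alpha-1}p^\ep)^{-1} = Id + \ep^{\alpha-1}(\tilde p) + o(\ep^{\alpha-1})$ in $L^\infty$ since $\tilde p$ is smooth and $p^\ep$ is bounded in $L^2$ with $P^\ep\in K$; hence $\tfrac{1}{\ep^{\alpha-1}}D(P^\ep,\hpe) \le c_7\,|\tilde p| + o(1)$ pointwise, and a careful application of the positive $1$-homogeneity and continuity of $H_D$ together with the identification $\hat p - p = \tilde p$ gives the bound $\int_\Omega H_D(\hat p - p)$; here one passes through the potential $H$ and uses that $\tr\tilde p = 0$.

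The main obstacle is the elastic energy inequality \eqref{limsupeng}. Here I linearize $W_{el}$ around the identity using \eqref{quadrwel}: writing $\nep\hye(\hpe)^{-1} = R^\ep\bigl(Id + \ep^{\alpha-1}\hat G^\ep\bigr)$ for a suitable scaled strain $\hat G^\ep$ built from the ansatz, frame indifference (H3) removes $R^\ep$, and on the set where $|\ep^{\alpha-1}\hat G^\ep|$ is small one has $W_{el}(Id+\ep^{\alpha-1}\hat G^\ep) = \ep^{2\alpha-2}Q(\hat G^\ep) + o(\ep^{2\alpha-2})$; off that set, which has vanishing measure by an equi-integrability/truncation argument controlled through \eqref{lp1}--\eqref{lp3}, one must show the contribution is negligible — this is the delicate point, requiring the growth hypotheses (H4), \eqref{mandel2} and estimate \eqref{lemmams}, and mirrors the analogous step in \cite{M-S} and \cite{D1}. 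One then identifies $\hat G^\ep \deb \hat G$ with $\hat G'$ as in the statement (the affine-in-$x_3$ form comes from the $-\tep(y^\ep_3/\ep)(\nabla')^2\tilde v$ and $-x_3(\nabla')^2 v$ terms, using Lemma \ref{cvproptep} again), chooses the corrector $b^\ep$ so that the third column of $\hat G^\ep$ realizes the minimizer defining $\A$, hence $Q(\hat G^\ep)\to Q(\A(\sym\hat G' - \hat p')) = Q_2(\sym\hat G' - \hat p')$ in the limit by strong $L^2$ convergence on the "good" set; subtracting the corresponding expansion for $W_{el}(\nep y^\ep(P^\ep)^{-1})$ and using \eqref{liminftot} to bound the subtracted term from below gives \eqref{limsupeng}. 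Throughout, the case distinction $\alpha=3$ versus $\alpha>3$ enters only through the factor $L_\alpha$ in $\sym\hat G_0$ and through which limit Lemma \ref{cvproptep} delivers; the extra condition \eqref{lp4} is what makes the nonlinear cross terms negligible when $\alpha>3$.
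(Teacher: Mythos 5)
Your construction of $\hpe$ fails to produce an admissible plastic strain. Setting $\hpe = P^\ep + \ep^{\alpha-1}\tilde p$ gives $\det\hpe\ne 1$ in general, so $\hpe\notin L^2(\Omega;SL(3))$ and $(\hye,\hpe)\notin\cal{A}_\ep(\pep)$. The claim that $\hpe(x)\in K$ for small $\ep$ is also not justified: that $Id$ is a relative interior point of $K$ gives slack near $Id$, but $P^\ep(x)$ can lie arbitrarily close to $\partial K$, and then $P^\ep(x)+\ep^{\alpha-1}\tilde p(x)$ escapes $K$. The paper resolves both problems at once with a multiplicative perturbation: $\hpe = \exp(\ep^{\alpha-1}\tilde p)P^\ep$ on the set $\sep=\{x:\exp(\ep^{\alpha-1}\tilde p(x))P^{\ep}(x)\in K\}$ and $\hpe=P^\ep$ off $\sep$. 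Since $\tr{\tilde p}=0$ the exponential preserves the determinant, \eqref{prk2} yields $\cal{L}^3(\Omega\setminus\sep)\le C\ep^{2\alpha-2}$, and $\hpe(P^\ep)^{-1}=\exp(\ep^{\alpha-1}\tilde p)$ on $\sep$ is precisely what drives \eqref{limsupdiss}.

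The same additive-versus-multiplicative issue is decisive for \eqref{limsupeng}. You correctly single out the bad set as the delicate point and invoke \eqref{lemmams}, but that estimate bounds $|W_{el}(G_1FG_2)-W_{el}(F)|$ for $G_1,G_2$ uniformly close to $Id$; it gives no control over an additive perturbation $W_{el}(F+\ep^{\alpha-1}H)$. The paper's ansatz $\hye=f^\ep\circ y^\ep$, with $f^\ep$ a near-identity diffeomorphism, is built exactly so that $\nep\hye(\hpe)^{-1}=\nabla f^\ep(y^\ep)\,\big[\nep y^\ep(P^\ep)^{-1}\big]\,P^\ep(\hpe)^{-1}$, with $\|\nabla f^\ep(y^\ep)-Id\|_{L^\infty}\le C\ep^{\alpha-1}\ell_\ep$ and $\|P^\ep(\hpe)^{-1}-Id\|_{L^\infty}\le C\ep^{\alpha-1}$: these are the hypotheses of \eqref{lemmams} and make the bad-set contribution vanish as in \eqref{badsetestimate}. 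Your additive $\hye=y^\ep+\ep^{\alpha-1}(\cdots)$ does not factor this way, so that step would not go through as sketched. If you recast both constructions in composition/multiplicative form, the rest of your outline (Lemma \ref{cvproptep}, the $\A$-corrector realizing $Q_2$, the $\alpha=3$ shift $-v\nabla'\tilde v$) does track the paper's Steps 1--4.
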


\begin{proof}
We divide the proof into four steps. In the first step we exhibit a sequence of deformations $(\hye)$ satisfying \eqref{cvidh}--\eqref{cptvepmrsh}. In the second step we construct a sequence $(\hpe)$ of plastic strains and we prove the limsup inequality for the hardening and the dissipation terms. In the third step we rewrite the elastic energy in terms of some auxiliary quantities and in the fourth step we prove the limsup inequality for the elastic energy.

We first remark that by \eqref{engest2} and the boundary condition
{\begin{equation}
\label{bddatum2}
 y^{\ep}(x)=\pep(x',\ep x_3)\quad\cal{H}^2\text{ - a.e. on }\Gamma_d,
\end{equation}}
 the sequence $(y^{\ep},P^{\ep})$ fulfills the hypotheses of Theorems \ref{compactbd1} and \ref{liminfineq}. Hence, there exists a sequence $(R^{\ep})\subset W^{1,\infty}(\omega;\mthree)$ such that \eqref{rt1}--\eqref{rt4} hold true, and $(y^{\ep})$ satisfies \eqref{cptyep}. Moreover, defining $u^{\ep}, v^{\ep},$ and $G^{\ep}$ according to \eqref{inplane}, \eqref{outofplane} and \eqref{defgep}, properties \eqref{cptuep}--\eqref{Ga>3} hold true. The sequence of plastic strains $(P^{\ep})$ satisfies 
\begin{equation}
\label{Pep1mrs} P^{\ep}(x)\in K\quad\text{for a.e. }x\in\Omega,
\end{equation}
and defining $p^{\ep}$ as in \eqref{defpep}, there holds
\begin{equation}
\label{wconvpepmrs}
p^{\ep}\deb p\quad\text{weakly in }L^2(\Omega;\mthree).
\end{equation}
Finally, by Theorem \ref{liminfineq}, $(u,v,p)\in\cal{A}(u^0,v^0)$ and, by \eqref{cptvep} and \eqref{3comphest}, the sequence $(y^{\ep}_3)$ fulfills the hypothesis of Lemma \ref{cvproptep}, hence 
\begin{equation}
\label{claimvhm} \tep\Big(\frac{y^{\ep}_3}{\ep}\Big)\to\begin{cases}x_3&\text{if }\alpha>3\\
x_3+v&\text{if }\alpha=3\end{cases}\quad\text{strongly in }L^2(\Omega),
\end{equation}
and by \eqref{lp2} and \eqref{tep1h}
\begin{equation}
\label{tep1hm} 
\frac{1}{\ep}-\frac{1}{\ep}\dot{\tep}\Big(\frac{y^{\ep}_3}{\ep}\Big)\to 0\quad\text{strongly in }L^2(\Omega).
\end{equation}
{\em Step 1: Construction of the deformations}\\
Let $d\in C^{\infty}_c(\R^3;\R^3)$ with $\supp\, d\subset \Omega$. Consider the map
$$\etp(x):=\int_{-\frac{1}{2}}^{\frac{x_3}{\ep}}{d(x',s)\,ds}\quad\text{for every }x\in \R^3.$$
Since $d$ has compact support in $\Omega$, there holds
$$|\etp(x)|\leq \int_{-\frac{1}{2}}^{\big|\frac{x_3}{\ep}\big|}{|d(x',s)|\,ds}\leq \int_{-\frac{1}{2}}^{\frac{1}{2}}{|d(x',s)|\,ds}\quad\text{for every }x\in\R^3$$
and analogously
\begin{equation}
\label{etaep0}
\|{\nabla}'{\etp}\|_{L^{\infty}(\R^3;\M^{3\times 2})}\leq \|{\nabla}' d\|_{L^{\infty}(\R^3;\M^{3\times 2})}.
\end{equation}
A straightforward computation yields
\begin{equation}
\label{etaep12}
\partial_3 \etp(x)=\frac{1}{\ep}d\Big(x',\frac{x_3}{\ep}\Big)\quad\text{for every }x\in\R^3.
\end{equation}
Hence,
\begin{equation}
\label{etaep1}
\|\etp\|_{W^{1,\infty}(\R^3;\R^3)}\leq \frac{C}{\ep}.
\end{equation}
In particular, the map $\etp\circ y^{\ep}$ satisfies
\begin{eqnarray}
&&\label{etaep2} \|\etp\circ y^{\ep}\|_{L^{\infty}(\Omega;\R^3)}\leq C,\\
&&\label{etaep3} \|{\nabla}'(\etp\circ y^{\ep})\|_{L^2(\Omega;\M^{3\times 2})}\leq C \|\nabla'(y^{\ep})'\|_{L^2(\Omega;\M^{2\times 2})}+\frac{C}{\ep}\|\nabla'y^{\ep}_3\|_{L^2(\Omega;\R^2)}.
\end{eqnarray}

We { extend $\tilde{u}$ and $\tilde{v}$ to zero outside their support, we} consider the functions
$$f^{\ep}(x):=x+\Big(\begin{array}{c}\ep^{\alpha-1}\tilde{u}(x')\\\ep^{\alpha-2}\tilde{v}(x')\end{array}\Big)-\Big(\begin{array}{c}\ep^{\alpha-1}\tep\big(\frac{x_3}{\ep}\big)\nabla' \tilde{v}(x')\\0\end{array}\Big)+\ep^{\alpha}\etp(x)$$
for every $x\in\R^3$, and we set
$$\hye:=f^{\ep}\circ y^{\ep}.$$

It is easy to see that $\hye\in W^{1,2}(\Omega;\R^3)$, we now check that 
{\begin{equation}
\label{bddat}
\hye=\phi^{\ep}(x',\ep x_3)\quad\cal{H}^2\text{ - a.e. on }\Gamma_d.
\end{equation}  
To prove it, we first remark that by \eqref{bddatum2}
\begin{equation}
\label{almbddat}
\hye=f^{\ep}(\pep(x',\ep x_3))\quad\cal{H}^2\text{ - a.e. on }\Gamma_d.\end{equation}
Hence, it remains only to show that
\begin{equation}
\label{almbddat2}
f^{\ep}(\pep(x',\ep x_3))=\pep(x',\ep x_3)\quad\cal{H}^2\text{ - a.e. on }\Gamma_d.
\end{equation}
Let $A\subset \R^2$ be an open set such that $\overline{\gamma_d}\subset (A\cap\partial\omega)$ and $\tilde{u},\tilde{v},\nabla'\tilde{v}=0$ in $A$.
Since $d$ has compact support in $\Omega$, without loss of generality we may assume that $\etp(x)=0$ for all $x\in A\times \R$ and every $\ep$. Therefore, we have $f^{\ep}(x)=x$ in $A\times\big(-\tfrac 12,\tfrac 12\big)$. Let now $O\subset \R^2$ be an open set such that $\overline{\gamma_d}\subset (O\cap\partial\omega)$ and $\overline{O}\subset A$, and let $0<\delta_0<\dist(O,\partial A)$. By \eqref{treq2}, there holds
$$|(\pep)'(x',\ep x_3)-x'|\leq \ep^{\alpha-1}\|u^0\|_{L^{\infty}(\R^2;\R^2)}+\frac{1}{2}\ep^{\alpha-2}\|\nabla' v^0\|_{L^{\infty}(\R^2;\M^{2\times 2})}< \frac{\delta_0}{2}$$
for every $x\in O\times\big(-\tfrac 12,\tfrac 12\big)$, for $\ep$ small enough. Hence, $\pep(x',\ep x_3)\in A$ for every $x\in O\times\big(-\tfrac 12,\tfrac 12\big)$, and $f^{\ep}(\phi^{\ep}(x',\ep x_3))=\phi^{\ep}(x',\ep x_3)$ for every $x\in O\times\big(-\tfrac 12,\tfrac 12\big)$. This implies \eqref{almbddat2} and \eqref{bddat}.}

To prove \eqref{cvidh}, we remark that by the smoothness of $\tilde{u}$ and $\tilde{v}$, estimates \eqref{treq3}, \eqref{treq5}, \eqref{lp2} and \eqref{etaep1} imply
\begin{equation}
\label{unifbdvarphi}
\|f^{\ep}-id\|_{W^{1,\infty}(\R^3;\R^3)}\leq C\ep^{\alpha-1}\ell_{\ep}.
\end{equation}
On the other hand, we have
\begin{eqnarray*}
&&\Big\|\hye-\Big(\begin{array}{c}x'\\0\end{array}\Big)\Big\|_{W^{1,2}(\Omega;\R^3)}\leq \|\hye-y^{\ep}\|_{W^{1,2}(\Omega;\R^3)}+\Big\|y^{\ep}-\Big(\begin{array}{c}x'\\0\end{array}\Big)\Big\|_{W^{1,2}(\Omega;\R^3)}\\
&&\leq C\|f^{\ep}-id\|_{W^{1,\infty}(\R^3;\R^3)}\|\nabla y^{\ep}\|_{L^2(\Omega;\mthree)}+\Big\|y^{\ep}-\Big(\begin{array}{c}x'\\0\end{array}\Big)\Big\|_{W^{1,2}(\Omega;\R^3)},
\end{eqnarray*}
so that \eqref{cvidh} follows by \eqref{cptyep}, \eqref{lp1} and \eqref{unifbdvarphi}.

We now prove convergence of the out-of-plane displacements associated to $(\hye)$.
To show \eqref{cptvepmrsh} we note that
\begin{equation}
\nonumber
\hat{v}^{\ep}=\frac{1}{\ep^{\alpha-2}}\intt{f^{\ep}_3(y^{\ep})}
 =v^{\ep}+\intt{\tilde{v}((y^{\ep})')}+\ep^2\intt{\etp_3(y^{\ep})}.
\end{equation}
By \eqref{cptyep}, up to subsequences, we can assume
\begin{equation}
(y^{\ep})'\to x'\quad\text{and }\nabla'(y^{\ep})'\to Id\quad\text{a.e. in }\Omega.
\label{ptcvid}
\end{equation}
Hence, by the dominated convergence theorem  and the smoothness of $\tilde{v}$ we obtain 
$$\tilde{v}((y^{\ep})')\to \tilde{v}\quad\text{strongly in }L^2(\Omega)$$
and
$$\nabla'\tilde{v}( (y^{\ep})')\to \nabla'\tilde{v}\quad\text{strongly in }L^2(\Omega;\R^2).$$
 By \eqref{cptyep}, \eqref{cptvep}, \eqref{etaep2} and \eqref{etaep3} we conclude
\begin{equation}
\nonumber
\hat{v}^{\ep}\to v+\tilde{v}=\hat{v}\quad\text{strongly in }W^{1,2}(\omega).
\end{equation}
To prove \eqref{4cptuepmrsh} and \eqref{4cptuepmrsh2} we note that
\begin{eqnarray}
\nonumber &&
\hat{u}^{\ep}=u^{\ep}+\intt{\tilde{u}((y^{\ep})')}-\intt{\tep\Big(\frac{y^{\ep}_3}{\ep}\Big)\nabla' \tilde{v}((y^{\ep})')}\\
\label{expuh} &&+\ep\intt{(\etp)'(y^{\ep})}.
\end{eqnarray}
By \eqref{claimvhm}, \eqref{ptcvid} and the dominated convergence theorem,
\begin{eqnarray}
&&\nonumber \tilde{u}((y^{\ep})')\to \tilde{u}\quad\text{strongly in }L^2(\Omega;\R^2),\\
&&\nonumber
\intt{\tep\Big(\frac{y^{\ep}_3}{\ep}\Big)\nabla' \tilde{v}((y^{\ep})')}\to 0\quad\text{strongly in }L^2(\omega;\R^2)\quad\text{for }\alpha>3,\\
&&\nonumber
\intt{\tep\Big(\frac{y^{\ep}_3}{\ep}\Big)\nabla' \tilde{v}((y^{\ep})')}\to v\nabla\tilde{v}\quad\text{strongly in }L^2(\omega;\R^2)\quad\text{for }\alpha=3.
\end{eqnarray}
Hence, by \eqref{etaep2}, we have
$$\hat{u}^{\ep}\to \hat{u}\quad\text{strongly in }L^2(\omega;\R^2)\quad\text{for }\alpha>3,$$
and
$$\hat{u}^{\ep}\to \hat{u}-v\nabla \tilde{v}\quad\text{strongly in }L^2(\omega;\R^2)\quad\text{for }\alpha=3.$$
To complete the proof of \eqref{4cptuepmrsh} and \eqref{4cptuepmrsh2}, it remains to show that 
 \begin{equation}
 \label{wcvgraduh}
 \frac{1}{\ep^{\alpha-1}}\nabla'\hat{u}^{\ep}\text{ is bounded in }L^2(\Omega;\M^{2\times 2}).
 \end{equation}
   By \eqref{expuh} there holds 
  \begin{eqnarray*}
&&\frac{1}{\ep^{\alpha-1}}\nabla'\hat{u}^{\ep}=\nabla'u^{\ep}+\intt{\nabla'\tilde{u}((y^{\ep})') \nabla'(y^{\ep})'}\\
&&-\intt{\tepp(\nabla')^2 \tilde{v}((y^{\ep})') \nabla'(y^{\ep})'}-\frac{1}{\ep}\intt{\teppp\nabla' \tilde{v}((y^{\ep})')\otimes \nabla'y^{\ep}_3}\\
&&+\ep\intt{\nabla'(\etp\circ y^{\ep})}.
\end{eqnarray*}
   By adding and subtracting the matrix $(R^{\ep})'$ we obtain
  \begin{eqnarray*}
 &&\intt{\tep\Big(\frac{y^{\ep}_3}{\ep}\Big)(\nabla')^2 \tilde{v}((y^{\ep})')\nabla' (y^{\ep})'}=\intt{\tep\Big(\frac{y^{\ep}_3}{\ep}\Big)(\nabla')^2 \tilde{v}((y^{\ep})')\big(\nabla' (y^{\ep})'-(R^{\ep})'\big)}\\
  &&+\intt{\tep\Big(\frac{y^{\ep}_3}{\ep}\Big)(\nabla')^2 \tilde{v}((y^{\ep})')(R^{\ep})'}.
  \end{eqnarray*}
  Combining \eqref{rt2} and \eqref{treq3}, we deduce
  \begin{equation}
  \nonumber
  \Big\|\tep\Big(\frac{y^{\ep}_3}{\ep}\Big)(\nabla')^2 \tilde{v}((y^{\ep})')\big(\nabla' (y^{\ep})'-(R^{\ep})'\big)\Big\|_{L^2(\Omega;\mathbb{M}^{2\times 2})}\leq C\ep^{\alpha-1}\ell_{\ep}.
  \end{equation}
  On the other hand, by \eqref{rt1} and \eqref{claimvhm}, the maps    
 $\tep\Big(\frac{y^{\ep}_3}{\ep}\Big)\nabla^2 \tilde{v}((y^{\ep})')(R^{\ep})'\text{ are bounded in }L^2(\Omega;\M^{2\times 2}).$ The $L^2$-boundedness of the quantity in \eqref{wcvgraduh} follows now by combining \eqref{cptyep}, \eqref{cptuep}, \eqref{cptnep3}, \eqref{treq5} and \eqref{etaep3}.\\

  {\em Step 2: Construction of the plastic strains}\\
 Arguing as in \cite[Proof of Lemma 3.6]{MS}, we introduce the sets
 $$\sep:=\{x\in \Omega: \exp(\ep^{\alpha-1}\tilde{p}(x))P^{\ep}(x)\in K\},$$
 we define
 $$\hppe:=\begin{cases}\frac{1}{\ep^{\alpha-1}}\big(\exp(\ep^{\alpha-1}\tilde{p})P^{\ep}-Id\big)&\text{in }\sep,\\
 p^{\ep}&\text{in }\Omega\setminus S_{\ep},\end{cases}$$
 and 
 \begin{equation}
\nonumber
 \hpe:=Id+\ep^{\alpha-1}\hppe,
 \end{equation}
 so that, by \eqref{Pep1mrs}, the sequence $(\hpe)$ satisfies \eqref{Pep1mrsh}. Since $\tr{\tilde{p}}=0$, 
 $$\det(\exp(\ep^{\alpha-1}\tilde{p}))=\exp(\ep^{\alpha-1}\tr{\tilde{p}})=1,$$
 therefore
 \begin{equation}
 \label{sl3ae}
\exp(\ep^{\alpha-1}\tilde{p}(x))P^{\ep}(x)\in SL(3)\quad\text{for a.e. }x\in \Omega.
 \end{equation}
 By \eqref{sl3ae} we can estimate $\cal{L}^3(\Omega\setminus \sep)$. Indeed by \eqref{prk2} and \eqref{wconvpepmrs} there holds
 \begin{eqnarray}
\nonumber &&
 \cal{L}^3(\Omega\setminus \sep)\leq c_k^2\intom{|\big(\exp(\ep^{\alpha-1}\tilde{p}(x))P^{\ep}(x)-Id|^2} \\
 \nonumber &&=c_k^2\intom{|\big(\exp(\ep^{\alpha-1}\tilde{p}(x))+\ep^{\alpha-1}\exp(\ep^{\alpha-1}\tilde{p}(x))p^{\ep}(x)-Id|^2}\\
  \label{measnotk}  &&
\leq C\ep^{2(\alpha-1)}\intom{(1+|p^{\ep}(x)|^2)}\leq C\ep^{2(\alpha-1)}.
 \end{eqnarray}
Now,
 \begin{equation}
 \label{diffhen}
 \hppe-p^{\ep}=\begin{cases} \frac{1}{\ep^{\alpha-1}}\big(\exp(\ep^{\alpha-1}\tilde{p})-Id\big)P^{\ep}&\text{in }\sep,\\
 0&\text{in }\Omega\setminus S_{\ep}.\end{cases}
 \end{equation}
 By \eqref{Pep1mrs}, \eqref{wconvpepmrs} and \eqref{measnotk} we deduce the following convergence properties:
 \begin{equation}
  \label{sumdiffph}
 \begin{cases}
   \|\hppe-p^{\ep}\|_{L^{\infty}(\Omega;\mthree)}\leq C,&\\
 \hppe-p^{\ep}\to \tilde{p}\quad\text{strongly in }L^2(\Omega;\mthree),&\\
 \hppe+p^{\ep}\deb \hat{p}+p\quad\text{weakly in }L^2(\Omega;\mthree),&
 \end{cases}
 \end{equation}
 hence in particular \eqref{wconvpepmrsh} holds true.
 Arguing exactly as in \cite[Proof of Lemma 3.6, Step 2 and Step 4]{MS}, we obtain \eqref{limsuphardening} and \eqref{limsupdiss}.\\
 {\em Step 3: Convergence properties of the elastic energy}\\
 To complete the proof of the theorem it remains to prove \eqref{limsupeng}.
 To this purpose, let $w^{\ep}$ be the map defined as
 \begin{equation}
\label{defwep}
w^{\ep}:=\frac{(P^{\ep})^{-1}-Id+\ep^{\alpha-1}p^{\ep}}{\ep^{\alpha-1}}=\ep^{\alpha-1}(P^{\ep})^{-1}(p^{\ep})^2.
\end{equation}
 By  \eqref{prk1} and \eqref{Pep1mrs}, there exists a constant $C$ such that 
 \begin{equation}
\nonumber
 \ep^{\alpha-1}\|p^{\ep}\|_{L^{\infty}(\Omega;\mthree)}\leq C
 \end{equation} 
 and 
 \begin{equation}
 \label{linftywephm}
 \ep^{\alpha-1}\|w^{\ep}\|_{L^{\infty}(\Omega;\mthree)}\leq C 
 \end{equation}  
 for every $\ep$. Furthermore, by \eqref{wconvpepmrs},
 $$\|w^{\ep}\|_{L^1(\Omega;\mthree)}\leq C\ep^{\alpha-1}\quad\text{for every }\ep.$$
 By the two previous estimates it follows that $(w^{\ep})$ is uniformly bounded in $L^2(\Omega;\mthree)$ and  
 \begin{equation}
 \label{l2wephm}
 w^{\ep}\deb 0\quad\text{weakly in }L^2(\Omega;\mthree).
 \end{equation} 
  Now, by \eqref{defgep} and the frame-indifference property (H3) of $W_{el}$ there holds
 \begin{eqnarray}
 \nonumber W_{el}(\nep y^{\ep}(P^{\ep})^{-1})&=&W_{el}\big((Id+\ep^{\alpha-1}G^{\ep})(Id+\ep^{\alpha-1}(w^{\ep}-p^{\ep}))\big)\\
 \label{id+fep}&=&W_{el}(Id+\ep^{\alpha-1}F^{\ep}),
 \end{eqnarray}
 for a.e. $x\in\Omega$, where 
 \begin{equation}
 \label{deffep1}
 F^{\ep}:=G^{\ep}+w^{\ep}-p^{\ep}+\ep^{\alpha-1}G^{\ep}(w^{\ep}-p^{\ep}).
 \end{equation}
 We note that $$\|G^{\ep}(w^{\ep}-p^{\ep})\|_{L^1(\Omega;\mthree)}\leq C$$
 by \eqref{cptGep}, \eqref{wconvpepmrs} and \eqref{l2wephm}. Moreover, by \eqref{cptGep}, \eqref{Pep1mrs} and \eqref{linftywephm},
 $$\ep^{\alpha-1}\|G^{\ep}(w^{\ep}-p^{\ep})\|_{L^2(\Omega;\mthree)}\leq\ep^{\alpha-1}\|G^{\ep}\|_{L^2(\Omega;\mthree)}\|(w^{\ep}-p^{\ep})\|_{L^{\infty}(\Omega;\mthree)}\leq C$$
 for every $\ep$. Hence
 $$\ep^{\alpha-1}G^{\ep}(w^{\ep}-p^{\ep})\deb 0\quad\text{weakly in }L^2(\Omega;\mthree),$$
 which in turn, by \eqref{cptGep}, \eqref{wconvpepmrs} and \eqref{l2wephm}, yields
 \begin{equation}
 \label{wconvFeh}
 F^{\ep}\deb G-p\quad\text{weakly in }L^2(\Omega;\mthree).
 \end{equation}
 
Analogously, we define
\begin{equation}
\label{defhwep}
\hat{w}^{\ep}:=\frac{(\hpe)^{-1}-Id+\ep^{\alpha-1}\hppe}{\ep^{\alpha-1}}=\ep^{\alpha-1}(\hpe)^{-1}(\hppe)^2.
\end{equation}
Then,
$$(\hat{P}^{\ep})^{-1}=Id+\ep^{\alpha-1}(\hat{w}^{\ep}-\hat{p}^{\ep}),$$
by \eqref{prk1} and \eqref{Pep1mrsh} we deduce
 \begin{equation}
\nonumber
 \ep^{\alpha-1}\|\hat{w}^{\ep}\|_{L^{\infty}(\Omega;\mthree)}\leq C,
 \end{equation}  
 and by \eqref{wconvpepmrsh},
  \begin{equation}
\nonumber
 \hat{w}^{\ep}\deb 0\quad\text{weakly in }L^2(\Omega;\mthree).
 \end{equation} 
 We define 
 \begin{equation}
 \label{defhgep}
 \hat{G}^{\ep}:=G^{\ep}+\hat{w}^{\ep}-\hppe+\ep^{\alpha-1}G^{\ep}(\hat{w}^{\ep}-\hppe).
 \end{equation}
Arguing as before, we can prove that 
\begin{equation}
\label{partconvfh}
\hat{G}^{\ep}\deb G-\hat{p}\quad\text{weakly in }L^2(\Omega;\mthree).
\end{equation}

We shall prove that there exists a sequence $(\hat{F}^{\ep})\subset L^2(\Omega;\mthree)$ satisfying $$W_{el}(\nep \hye(\hpe)^{-1})=W_{el}(Id+\ep^{\alpha-1}\hat{F}^{\ep})$$
and such that
\begin{equation}
\label{propfhep}
\hat{F}^{\ep}-\hat{G}^{\ep}\to N_{\alpha}\quad\text{strongly in }L^2(\Omega;\mthree),
\end{equation}
where 
\begin{equation}
\label{deffa}
{N}_{\alpha}:=\sym\Big(\begin{array}{c}\nabla' \tilde{u}-x_3(\nabla')^2 \tilde{v}\\0\end{array}\Big|d\Big)\quad\text{for }\alpha>3,
\end{equation}
and
\begin{eqnarray}
 N_{3}&:=&\sym\Big(\begin{array}{c}\nabla \tilde{u}-(x_3+v)(\nabla')^2 \tilde{v}+\frac{\nabla' \tilde{v}\otimes\nabla' \tilde{v}}{2}\\0\end{array}\Big|\begin{array}{c}d'(x',x_3+v)\\d_3(x',x_3+v)+\frac{1}{2}|\nabla'\tilde{v}|^2 \end{array}\Big)
\label{deffa3}
\end{eqnarray}
 a.e. in $\Omega$. 
To this purpose, we first observe that by \eqref{defgep}, \eqref{defhgep} and the frame-indifference hypothesis (H3) there holds
\begin{eqnarray}
\nonumber&&W_{el}(\nep \hye (\hpe)^{-1})=W_{el}\big(\nabla f^{\ep}(y^{\ep})\nep y^{\ep}(\hpe)^{-1}\big)\\
\label{eqwel}&&=W_{el}\Big((R^{\ep})^T\sqrt{(\nabla f^{\ep}(y^{\ep}))^T\nabla f^{\ep}(y^{\ep})}R^{\ep}(Id+\ep^{\alpha-1}\hat{G}^{\ep})\Big).
\end{eqnarray}
We set  
$$M^{\ep}(x):=\frac{\nabla f^{\ep}(x)-Id}{\ep^{\alpha-1}\ell_{\ep}}.$$
By \eqref{unifbdvarphi} there holds 
\begin{equation}
\label{unifbdmep}
\|M^{\ep}(y^{\ep})\|_{L^{\infty}(\Omega;\mthree)}\leq C
\end{equation}
for every $\ep$. 

We claim that, to prove \eqref{propfhep} it is enough to show that
\begin{equation}
\label{symmep}
\ell_{\ep}(R^{\ep})^T\sym \big(M^{\ep}(y^{\ep})\big)R^{\ep}\to\begin{cases} \sym\Big(\begin{array}{c}\nabla' \tilde{u}-x_3(\nabla')^2 \tilde{v}\\0\end{array}\Big|d\Big)&\text{if }\alpha>3\\
\sym\Big(\begin{array}{c}\nabla' \tilde{u}-(x_3+v)(\nabla')^2 \tilde{v}\\0\end{array}\Big|d(x',x_3+v)\Big)&\text{if }\alpha=3\end{cases}
\end{equation}
strongly in $L^2(\Omega;\mthree)$, and
\begin{equation}
\label{meptmep}
\ep^2\ell_{\ep}^2(R^{\ep})^T(M^{\ep}(y^{\ep}))^TM^{\ep}(y^{\ep})R^{\ep}\to\Big(\begin{array}{cc}\nabla' \tilde{v}\otimes\nabla' \tilde{v}&0\\0&|\nabla'\tilde{v}|^2\end{array}\Big)\quad\text{if }\alpha=3
\end{equation}
strongly in $L^2(\Omega;\mthree)$. Indeed, a Taylor expansion around the identity yields
$$\sqrt{(Id+F)^T(Id+F)}=Id+\sym\, F+\frac{F^TF}{2}-\frac{(\sym\, F)^2}{2}+O(|F|^3)$$
for every $F\in\mthree$.
Hence,
\begin{eqnarray*}
&&\sqrt{(\nabla f^{\ep}(y^{\ep}))^T\nabla f^{\ep}(y^{\ep})}=Id+\ep^{\alpha-1}\ell_{\ep}\,\sym\, M^{\ep}(y^{\ep})+\frac{\ep^{2\alpha-2}\ell_{\ep}^2}{2}(M^{\ep}(y^{\ep}))^TM^{\ep}(y^{\ep})\\
&&-\frac{\ep^{2\alpha-2}\ell_{\ep}^2}{2}\big(\sym\, M^{\ep}(y^{\ep})\big)^2+O(\ep^{3\alpha-3}\ell_{\ep}^3).
\end{eqnarray*}
Substituting the previous expression into \eqref{eqwel} we obtain
\begin{equation}
\label{id+hfep}
W_{el}(\nep \hye(\hpe)^{-1})=W_{el}(Id+\ep^{\alpha-1}\hat{F}^{\ep})
\end{equation}
where
\begin{eqnarray}
\nonumber\hat{F}^{\ep}&=&\hat{G}^{\ep}+\ell_{\ep}(R^{\ep})^T\sym\, M^{\ep}(y^{\ep})R^{\ep}+\frac{\ep^{\alpha-1}\ell_{\ep}^2}{2}(R^{\ep})^T(M^{\ep}(y^{\ep}))^TM^{\ep}(y^{\ep})R^{\ep}\\
\nonumber &&-\frac{\ep^{\alpha-1}\ell_{\ep}^2}{2}(R^{\ep})^T\big(\sym\, M^{\ep}(y^{\ep})\big)^2R^{\ep}+\ep^{\alpha-1}\ell_{\ep}(R^{\ep})^T\sym\, M^{\ep}(y^{\ep})R^{\ep}\hat{G}^{\ep}+O(\ep^{2\alpha-2}\ell_{\ep}^3)\\
\nonumber &&+O(\ep^{2\alpha-2}\ell_{\ep}^2)\hat{G}^{\ep}
\end{eqnarray}
Now, if $\alpha>3$, by \eqref{rt1} and \eqref{unifbdmep} there holds
\begin{eqnarray*}
\|\hat{F}^{\ep}-\hat{G}^{\ep}-\ell_{\ep}(R^{\ep})^T\sym (M^{\ep}(y^{\ep}))R^{\ep}\|_{L^2(\Omega;\mthree)}&\leq& C\ep^{\alpha-1}\ell_{\ep}^2+C\ep^{\alpha-1}\ell_{\ep}\|\hat{G}^{\ep}\|_{L^2(\Omega;\mthree)}\\
&+& C\ep^{2\alpha-2}\ell_{\ep}^3+C\ep^{2\alpha-2}\ell_{\ep}^2\|\hat{G}^{\ep}\|_{L^2(\Omega;\mthree)}.
\end{eqnarray*}
Hence, by combining \eqref{lp1}, \eqref{lp4}, \eqref{partconvfh} and \eqref{symmep} we deduce \eqref{propfhep}.

In the case $\alpha=3$, by \eqref{unifbdmep} and \eqref{symmep} there holds
$$\ep^4\ell_{\ep}^4\intom{|\sym (M^{\ep}(y^{\ep}))|^4}\leq C\ep^4\ell_{\ep}^4\intom{|\sym (M^{\ep}(y^{\ep}))|^2}\leq C\ep^4\ell_{\ep}^2.$$
Therefore, by \eqref{rt1} and \eqref{unifbdmep} we have
\begin{eqnarray*}
\|\hat{F}^{\ep}-\hat{G}^{\ep}-\ell_{\ep}(R^{\ep})^T\sym (M^{\ep}(y^{\ep}))R^{\ep}-\frac{\ep^2\ell_{\ep}^2}{2}(R^{\ep})^T (M^{\ep}(y^{\ep}))^TM^{\ep}(y^{\ep})R^{\ep}\|_{L^2(\Omega;\mthree)}\\
\leq C\ep^2\ell_{\ep}+C\ep^2\ell_{\ep}\|\hat{G}^{\ep}\|_{L^2(\Omega;\mthree)}+C\ep^4\ell_{\ep}^3\\
+C\ep^4\ell_{\ep}^2\|\hat{G}^{\ep}\|_{L^2(\Omega;\mthree)}.
\end{eqnarray*}
Therefore, once \eqref{symmep} and \eqref{meptmep} are proved, \eqref{propfhep} follows by \eqref{lp1}, \eqref{lp3} and \eqref{partconvfh}.

We now prove \eqref{symmep} and \eqref{meptmep}. By straightforward computations we have
\begin{eqnarray*}
&&\ell_{\ep}\sym \,(M^{\ep}(y^{\ep}))=\sym\Big(\begin{array}{cc}\nabla' \tilde{u}((y^{\ep})')-\teppm(\nabla')^2 \tilde{v}((y^{\ep})')&0\\0&0\end{array}\Big)\\
&&+\frac{1}{\ep}\sym\Big(\begin{array}{cc}0&\big(1-\tepppm\big)\nabla' \tilde{v}((y^{\ep})')\\0&0\end{array}\Big)+\ep\,\sym (\nabla'\etp(y^{\ep})|\partial_3 \etp(y^{\ep})).
\end{eqnarray*}
Now, $\ep \nabla'\etp(y^{\ep})\to 0$ strongly in $L^2(\Omega;\M^{3\times 2})$ by \eqref{etaep0}. Moreover, \eqref{etaep12} yields
$$\ep \partial_3 \etp(y^{\ep}(x))=d\Big((y^{\ep})'(x),\frac{y^{\ep}_3(x)}{\ep}\Big)\quad\text{for a.e. }x\in\Omega.$$
Hence, by \eqref{3comphest} and \eqref{ptcvid}, there holds
\begin{equation}
\label{star1}
\ep\,(\nabla'\etp(y^{\ep})|\partial_3 \etp(y^{\ep}))\to\begin{cases} (0|d)&\text{if }\alpha>3\\(0|d(x',x_3+v))&\text{if }\alpha=3\end{cases}
\end{equation}
strongly in $L^2(\Omega;\mthree).$
On the other hand, by \eqref{claimvhm}, \eqref{ptcvid}, and the dominated convergence theorem
\begin{equation}
\label{star2}
\nabla' \tilde{u}((y^{\ep})')-\tep\Big(\frac{y^{\ep}_3}{\ep}\Big)(\nabla')^2 \tilde{v}((y^{\ep})')\to\begin{cases}\nabla' \tilde{u}-x_3(\nabla')^2 \tilde{v}&\text{if }\alpha>3\\
\nabla' \tilde{u}-(x_3+v)(\nabla')^2 \tilde{v}&\text{if }\alpha=3\end{cases}
\end{equation}
strongly in $L^2(\Omega;\M^{2\times 2})$.
Claim \eqref{symmep} follows now by combining \eqref{rt1}, \eqref{rt4}, \eqref{tep1hm}, \eqref{star1} and \eqref{star2}. 

To prove \eqref{meptmep}, we observe that by \eqref{tep1hm}, \eqref{star1} and \eqref{star2}, if $\alpha=3$ there exists a constant $C$ such that
$$\Big\|\ell_{\ep}M^{\ep}(y^{\ep})-\frac{1}{\ep}\Big(\begin{array}{cc}0&-\nabla' \tilde{v}((y^{\ep})')\\(\nabla' \tilde{v}((y^{\ep})'))^T&0\end{array}\Big)\Big\|_{L^2(\Omega;\mthree)}\leq C$$
for every $\ep$. Hence, by \eqref{rt1} there holds
\begin{eqnarray}
\nonumber&&\Big\|\ep^2\ell_{\ep}^2(R^{\ep})^T(M^{\ep}(y^{\ep}))^TM^{\ep}(y^{\ep})R^{\ep}\\
\nonumber&&-(R^{\ep})^T\Big(\begin{array}{cc}0&-\nabla' \tilde{v}((y^{\ep})')\\(\nabla' \tilde{v}((y^{\ep})'))^T&0\end{array}\Big)^T\Big(\begin{array}{cc}0&-\nabla' \tilde{v}((y^{\ep})')\\(\nabla' \tilde{v}((y^{\ep})'))^T&0\end{array}\Big)R^{\ep}\Big\|_{L^2(\Omega;\mthree)}\\
\nonumber&&\leq C\ep^2\ell_{\ep}\|M^{\ep}(y^{\ep})\|_{L^{\infty}(\Omega;\mthree)}+C\ep\Big\|\Big(\begin{array}{cc}0&-\nabla' \tilde{v}((y^{\ep})')\\(\nabla' \tilde{v}((y^{\ep})'))^T&0\end{array}\Big)\Big\|_{L^{\infty}(\Omega;\mthree)},\\
\label{est1a3}
\end{eqnarray}
which converges to zero due to \eqref{lp1} and \eqref{unifbdmep}.
On the other hand, 
\begin{eqnarray*}
&&\Big(\begin{array}{cc}0&-\nabla' \tilde{v}((y^{\ep})')\\(\nabla' \tilde{v}((y^{\ep})'))^T&0\end{array}\Big)^T\Big(\begin{array}{cc}0&-\nabla' \tilde{v}((y^{\ep})')\\(\nabla' \tilde{v}((y^{\ep})'))^T&0\end{array}\Big)\\
&&=\Big(\begin{array}{cc}\nabla' \tilde{v}((y^{\ep})')\otimes \nabla' \tilde{v}((y^{\ep})')&0\\0&|\nabla' \tilde{v}((y^{\ep})')|^2\end{array}\Big).
\end{eqnarray*}
Moreover, by \eqref{ptcvid} and by the dominated convergence theorem there holds
\begin{equation}
\label{est2a3}\Big(\begin{array}{cc}\nabla' \tilde{v}((y^{\ep})')\otimes \nabla' \tilde{v}((y^{\ep})')&0\\0&|\nabla' \tilde{v}((y^{\ep})')|^2\end{array}\Big)\to \Big(\begin{array}{cc}\nabla' \tilde{v}\otimes\nabla' \tilde{v}&0\\0&|\nabla' \tilde{v}|^2\end{array}\Big)
\end{equation}
strongly in $L^2(\Omega;\mthree)$. Combining \eqref{est1a3} and \eqref{est2a3}, we deduce \eqref{meptmep} and therefore \eqref{propfhep}.\\
{\emph{Step 4: Limsup inequality for the elastic energy}}\\
We are now in a position to prove \eqref{limsupeng}. We argue as in \cite[Lemma 3.6]{MS}. We fix $\delta>0$ and we introduce the sets
$$U_{\ep}:=\{x\in \Omega: \ep^{\alpha-1}(|F^{\ep}|+|\hat{F}^{\ep}|)\leq c_{el}(\delta)\},$$
where $c_{el}(\delta)$ is the constant in \eqref{quadrwel}.
By \eqref{partconvfh} and \eqref{propfhep} it follows that
\begin{equation}
\label{propfhep2}
\hat{F}^{\ep}\deb \hat{F}_{\alpha}:=N_{\alpha}+G-\hat{p}\quad\text{weakly in }L^2(\Omega;\mthree),\text{ for }\alpha\geq 3.
\end{equation}
By \eqref{wconvFeh} and by Chebychev inequality we deduce
\begin{equation}
\label{measUeph}
\cal{L}^3(\Omega\setminus U_{\ep})\leq C\ep^{2\alpha-2}.
\end{equation}
Since $$\nep \hye (\hpe)^{-1}=\nabla f^{\ep}(y^{\ep})\big(\nep y^{\ep}(P^{\ep})^{-1}\big)P^{\ep}(\hpe)^{-1},$$
property \eqref{lemmams} yields
\begin{eqnarray}
\nonumber &&|W_{el}(\nep \hye (\hpe)^{-1})-W_{el}(\nep y^{\ep}(P^{\ep})^{-1})|\\
\label{diffwel} &&
\leq C(1+W_{el}(\nep y^{\ep}(P^{\ep})^{-1}))(|\nabla f^{\ep}(y^{\ep})-Id|+|P^{\ep}(\hpe)^{-1}-Id|)
\end{eqnarray}
a.e. in $\Omega$. By \eqref{prk1} and \eqref{Pep1mrsh} there holds 
\begin{eqnarray*}
\|P^{\ep}(\hpe)^{-1}-Id\|_{L^{\infty}(\Omega;\mthree)}\leq c_k\|P^{\ep}-\hpe\|_{L^{\infty}(\Omega;\mthree)}\\
\leq c_k\|(Id-\exp(\ep^{\alpha-1}\tilde{p}))P^{\ep}\|_{L^{\infty}(\Omega;\mthree)}
\leq C\ep^{\alpha-1},
\end{eqnarray*}
hence, by combining \eqref{unifbdvarphi}, \eqref{measUeph} and \eqref{diffwel} we deduce
\begin{eqnarray}
\nonumber &&\frac{1}{\ep^{2\alpha-2}}\Big|\int_{\Omega\setminus U_{\ep}}{W_{el}(\nep \hye (\hpe)^{-1})}-\int_{\Omega\setminus U_{\ep}}{W_{el}(\nep y^{\ep}(P^{\ep})^{-1})}\Big|\\
\label{badsetestimate}&&\leq C\ep^{\alpha-1}(1+\ell_{\ep})\Big(1+\frac{1}{\ep^{2\alpha-2}}\intom{W_{el}(\nep y^{\ep}(P^{\ep})^{-1})}\Big),
\end{eqnarray}
which tends to zero owing to \eqref{engest2} and \eqref{lp1}.

On the other hand, on the sets $U_{\ep}$ we can use the estimate \eqref{quadrwel}. Hence, by \eqref{id+fep}, \eqref{id+hfep} and the quadratic structure of $Q$ we obtain 
\begin{eqnarray}
\nonumber &&\frac{1}{\ep^{2\alpha-2}}\int_{U_{\ep}}{W_{el}(\nep \hye (\hpe)^{-1})\,dx}-\frac{1}{\ep^{2\alpha-2}}\int_{U^{\ep}}{W_{el}(\nep y^{\ep}(P^{\ep})^{-1})\,dx}\\
 \nonumber && \leq\delta \intom{(|F^{\ep}|^2+|\hat{F}^{\ep}|^2)}+\intom{(Q(\hat{F}^{\ep})-Q(F^{\ep}))}\\
\label{cit1}&&=  \delta \intom{(|F^{\ep}|^2+|\hat{F}^{\ep}|^2)}+\frac{1}{2}\intom{\C(\hat{F}^{\ep}-F^{\ep}):(\hat{F}^{\ep}+F^{\ep})}.
\end{eqnarray} 
Now, by \eqref{wconvFeh} and \eqref{propfhep2} there holds
\begin{equation}
\label{cit2}
\hat{F}^{\ep}+F^{\ep}\deb \hat{F}_{\alpha}+G-p\quad\text{weakly in }L^2(\Omega;\mthree).
\end{equation}
Moreover,
\begin{equation}
\label{diffefhe}
\hat{F}^{\ep}-F^{\ep}\to \hat{F}_{\alpha}-G+p\quad\text{strongly in }L^2(\Omega;\mthree).
\end{equation}

Indeed, by \eqref{propfhep} and \eqref{propfhep2} it is enough to show that
$$\hat{G}^{\ep}-F^{\ep}\to p-\hat{p}\quad\text{strongly in }L^2(\Omega;\mthree).$$
By \eqref{deffep1} and \eqref{defhgep} we have
$$\hat{G}^{\ep}-F^{\ep}=(Id+\ep^{\alpha-1}G^{\ep})(\hat{w}^{\ep}-\hppe-w^{\ep}+p^{\ep}).$$
Now, by \eqref{diffhen}, \eqref{defwep} and \eqref{defhwep}, $\hat{w}^{\ep}-w^{\ep}=0$ in $\Omega\setminus S^{\ep}$, whereas in the sets $S^{\ep}$ we have
\begin{eqnarray*}
&&\hat{w}^{\ep}-w^{\ep}=\ep^{\alpha-1}(\hpe)^{-1}(\hppe)^2-\ep^{\alpha-1}(P^{\ep})^{-1}(p^{\ep})^2\\
&&=\ep^{\alpha-1}(P^{\ep})^{-1}\big(\exp(-\ep^{\alpha-1}\tilde{p})(\hppe)^2-(p^{\ep})^2\big)\\
&&=\ep^{\alpha-1}(P^{\ep})^{-1}(\exp(-\ep^{\alpha-1}\tilde{p})-Id)(\hppe)^2+\ep^{\alpha-1}(P^{\ep})^{-1}((\hppe)^2-(p^{\ep})^2).
\end{eqnarray*}
Therefore, by \eqref{Pep1mrsh}, \eqref{wconvpepmrsh}, \eqref{Pep1mrs} and \eqref{wconvpepmrs}, we deduce
\begin{eqnarray*}
&&\|\hat{w}^{\ep}-w^{\ep}\|_{L^2(\Omega;\mthree)}\leq C(\ep^{\alpha-1}+\|\hppe-p^{\ep}\|_{L^2(\Omega;\mthree)})\leq C,\\
&&\|\hat{w}^{\ep}-w^{\ep}\|_{L^1(\Omega;\mthree)}\leq C\ep^{\alpha-1},\\
&&\|\hat{w}^{\ep}-w^{\ep}\|_{L^{\infty}(\Omega;\mthree)}\leq C(1+\|\hppe-p^{\ep}\|_{L^{\infty}(\Omega;\mthree)}).
\end{eqnarray*}
Combining these estimates with \eqref{cptGep} and \eqref{sumdiffph} we obtain \eqref{diffefhe}.

Consider now the case $\alpha>3$. By \eqref{badsetestimate}--\eqref{diffefhe} we have 
\begin{eqnarray*}
&&\nonumber\limsup_{\ep\to 0} \Big\{\frac{1}{\ep^{2\alpha-2}}\intom{W_{el}(\nep \hye (\hpe)^{-1})}-\frac{1}{\ep^{2\alpha-2}}\intom{W_{el}(\nep y^{\ep} (P^{\ep})^{-1})}\Big\}\\
&&\leq\frac{1}{2}\intom{\C(\hat{F}_{\alpha}-G+p):(\hat{F}_{\alpha}+G-p)}+C\delta.
\end{eqnarray*}
Since $\delta$ is arbitrary, we deduce
\begin{eqnarray}
&&\nonumber\limsup_{\ep\to 0} \Big\{\frac{1}{\ep^{2\alpha-2}}\intom{W_{el}(\nep \hye (\hpe)^{-1})}-\frac{1}{\ep^{2\alpha-2}}\intom{W_{el}(\nep y^{\ep} (P^{\ep})^{-1})}\Big\}\\
&&\nonumber \leq\frac{1}{2}\intom{\C(\hat{F}_{\alpha}-G+p):(\hat{F}_{\alpha}+G-p)}\\
&&\label{qtrick}=\intom{Q(\hat{F}_{\alpha})}-\intom{Q(G-p)}\leq \intom{Q(\hat{F}_{\alpha})}-\intom{Q_2(G'-p')}.
\end{eqnarray}
By \eqref{deffa} and \eqref{propfhep2}, up to an approximation argument, we may assume that $d$ is such that
$$Q(\hat{F}_{\alpha})=Q_2(\sym\,\nabla' \hat{u}-x_3(\nabla')^2\hat v-\hat{p}').$$
This, together with \eqref{qtrick}, implies \eqref{limsupeng}. 

In the case $\alpha=3$ a preliminary approximation argument is needed. {Let $(\tilde{u}^k)$ be a sequence in $C^{\infty}_c(\omega\cup\gamma_n;\R^2)$, such that
$$\tilde{u}^k\to \tilde{u}+v\nabla' \tilde{v}\quad\text{strongly in }W^{1,2}(\omega;\R^2)$$
(such a sequence exists by Lemma \ref{bdc} because $\tilde{u}\in C^{\infty}_c(\omega\cup\gamma_n;\R^2)$ and $\tilde{v}\in C^{\infty}_c(\omega\cup\gamma_n)$).
Let also ${v}^k\in C^{\infty}_c(\omega)$ be such that 
$${v}^k\to {v}\quad\text{strongly in }L^2(\omega)$$
and set}
 $${d}^k(x):=d(x',x_3-{v}^k(x'))\quad\text{for a.e. }x\in\Omega.$$
 Since $d\in C^{\infty}_c(\Omega;\R^3)$, there exists an open set $O\subset \R^2$ such that $\overline{O}\subset \omega$ and $d^k(x',x_3)=0$ for every $x\in (\omega\setminus \overline{O})\times \R$. Moreover, $d^k(x',x_3)=0$ for every $x\in\R^3$ such that $|x_3|>\frac{1}{2}+\|{v}^k\|_{L^{\infty}(\R^2)}$. Hence, $d^k\in C^{\infty}(\R^3;\R^3)$ and $$\supp\, {d^k}\subset \overline{O}\times \big(-\tfrac12-\|{v}^k\|_{L^{\infty}(\R^2)}, \tfrac 12+\|{v}^k\|_{L^{\infty}(\R^2)}\big).$$ 
 It is easy to see that \eqref{deffa3}, \eqref{propfhep2} and \eqref{badsetestimate}--\eqref{diffefhe} can still be deduced, and for every $k$ we can construct a sequence $(\hye_k, \hpe_k)$ that satisfies \eqref{cvidh}--\eqref{wconvpepmrsh} with $\hat{u}$ replaced by $u+\tilde{u}^k$, and
\begin{eqnarray*}
&&\nonumber\limsup_{\ep\to 0} \Big\{\frac{1}{\ep^{2\alpha-2}}\intom{W_{el}(\nep \hye_k (\hpe_k)^{-1})}-\frac{1}{\ep^{2\alpha-2}}\intom{W_{el}(\nep y^{\ep} (P^{\ep})^{-1})}\Big\}\\
&&\leq\frac{1}{2}\intom{\C(\hat{F}^k-G+p):(\hat{F}^k+G-p)},
\end{eqnarray*}
where
$$\hat{F}^k:=\sym\Big(\begin{array}{c}\nabla' \tilde{u}^k-(x_3+v)(\nabla')^2 \tilde{v}+\frac{\nabla' \tilde{v}\otimes\nabla' \tilde{v}}{2}\\0\end{array}\Big|\begin{array}{c}d'(x', x_3+{v}-{v}^k)\\d_3(x',x_3+{v}-{v}^k)+\frac{1}{2}|\nabla' \tilde{v}|^2\end{array}\Big)+G-\hat{p}.$$
 On the other hand,
$$\hat{F}^k\to \sym\Big(\begin{array}{c}\nabla' \tilde{u}-x_3(\nabla')^2 \tilde{v}+\nabla' {v}\otimes\nabla' \tilde{v}+\frac{\nabla' \tilde{v}\otimes\nabla' \tilde{v}}{2}\\0\end{array}\Big|\begin{array}{c}d'\\d_3+\frac{1}{2}|\nabla' \tilde{v}|^2\end{array}\Big)+G-\hat{p}=:\hat{F}$$
strongly in $L^2(\Omega;\mthree)$, as $k\to +\infty$. A diagonal argument leads then to the estimate
\begin{eqnarray}
&&\nonumber\limsup_{\ep\to 0} \frac{1}{\ep^{2\alpha-2}}\Big(\intom{W_{el}(\nep \hye (\hpe)^{-1})}-\intom{W_{el}(\nep y^{\ep} (P^{\ep})^{-1})}\Big)\\
&&\label{aend}\leq \frac{1}{2}\intom{\C(\hat{F}-G+p):(\hat{F}+G-p)}.
\end{eqnarray}
Up to a further approximation, we may assume that $d$ is such that
$$Q(\hat{F})=Q_2\Big(\sym\,\nabla' \hat{u}-x_3(\nabla')^2 \hat{v}+\frac{1}{2}\nabla' \hat{v}\otimes\nabla' \hat{v}-\hat{p}'\Big),$$
hence \eqref{limsupeng} follows by \eqref{aend}. 
\end{proof}
\section{Convergence of quasistatic evolutions}
\label{pquas}
The first part of this section is devoted to the proof of Theorem \ref{cvstress}. We first prove the theorem for $\alpha>3$ and then we show how the proof must be modified for $\alpha=3$.
\begin{proof}[Proof of Theorem \ref{cvstress} in the case $\alpha>3$]
The proof is divided into five steps.\\
{\em{Step 0: A priori estimates on the elasto-plastic energy}}\\
Set $y^{\ep}(t):=\pep(t,\zep(t))$ for every $t\in [0,T]$. Arguing as in the proof of \eqref{rsze}, it is immediate to see that 
{\begin{equation}
\label{bdtep}
y^{\ep}(t,x)=\pep(t,(x',\ep x_3))\quad\cal{H}^2\text{ - a.e. on }\Gamma_d.
\end{equation}} 
In this step we shall show that there exists a constant $C$ such that for every $t\in [0,T]$ and every $\ep$ there holds
\begin{equation}
\label{unifapest}
\frac{1}{\ep^{\alpha-1}}\|\dist(\nep y^{\ep}(t)(P^{\ep})^{-1}(t),SO(3))\|_{L^2(\Omega)}+\|p^{\ep}(t)\|_{L^2(\Omega;\mthree)}+\|\ep^{\alpha-1}p^{\ep}(t)\|_{L^{\infty}(\Omega;\mthree)}\leq C.
\end{equation}

To this purpose, we first remark that since $t\mapsto (\zep(t), P^{\ep}(t))$ is an $\ep$-quasistatic evolution, then 
\begin{equation}
\label{isink}
P^{\ep}(t,x)\in K\quad\text{for a.e. }x\in\Omega,\text{ for every }\ep\text{ and }t,
\end{equation}
  hence $\ep^{\alpha-1}p^{\ep}(t)\in K-Id$ for every $\ep$ and $t$ and by \eqref{prk1} there exists a constant $C$ such that
\begin{equation}
\label{unifbdpep}
\|\ep^{\alpha-1}p^{\ep}(t)\|_{L^{\infty}(\Omega;\mthree)}\leq C\quad\text{for every }\ep\text{ and }t.
\end{equation}

By the minimality condition (qs1), taking $\tilde{z}(x)=(x',\ep x_3)$ and $\tilde{P}(x)= Id$ for every $x\in \Omega$, and observing that $W_{hard}(Id)=0$ a.e. in $\Omega$ by \eqref{prh4}, we deduce
\begin{eqnarray}
\label{estiniz} \frac{1}{\ep^{2\alpha-2}}\cal{F}_{\ep}(t,\zep(t),P^{\ep}(t))\leq \frac{1}{\ep^{2\alpha-2}}\intom{W_{el}\big(\nabla \pep (t, (x',\ep x_3))\big)}
+\frac{1}{\ep^{\alpha-1}}\intom{D(P^{\ep}(t),Id)},
\end{eqnarray}
for every $t\in [0,T]$ and for all $\ep$. By \eqref{prd2} and \eqref{isink}, there holds 
$$D(P^{\ep}(t),Id)=D(Id, (P^{\ep})^{-1}(t))\leq c_7|(P^{\ep})^{-1}(t)-Id|\leq c_7c_K|Id-P^{\ep}(t)|,$$
where the last inequality follows by \eqref{prk1}. Hence, Holder inequality yields
\begin{equation}
\label{estiniz2}
\frac{1}{\ep^{\alpha-1}}\intom{D(P^{\ep}(t),Id)}\leq\frac{C}{\ep^{\alpha-1}}\|Id-P^{\ep}(t)\|_{L^2(\Omega;\mthree)}.
\end{equation}
On the other hand, by frame indifference (H3) of $W_{el}$ we obtain
\begin{eqnarray*}
{W_{el}\big(\nabla \pep(t, (x',\ep x_3))\big)}={W_{el}\Big(\sqrt{(\nabla \pep)^T (t, (x',\ep x_3))\nabla \pep (t,(x',\ep x_3)}\Big)}
\end{eqnarray*}
for every $x\in\Omega$ and for all $t\in [0,T]$. By \eqref{treq1}, \eqref{lp2} and \eqref{gradpep} there holds
\begin{eqnarray*}
\nonumber&&\nabla \pep(t, (x',\ep x_3))=Id+\ep^{\alpha-1}\Big(\begin{array}{cc}\nabla' {u}^0(t,x')-x_3(\nabla')^2 {v}^0(t,x')&0\\0&0\end{array}\Big)\\
&&+\ep^{\alpha-2}\Big(\begin{array}{cc}0&-\nabla' {v}^0(t,x')\\(\nabla' {v}^0(t,x'))^T&0\end{array}\Big),
\end{eqnarray*}
for every $x\in\Omega$. Since $\alpha>3$, we deduce
\begin{eqnarray*}
&&(\nabla \pep)^T (t,(x',\ep x_3))\nabla \pep (t, (x',\ep x_3))\\
&&=Id+2\ep^{\alpha-1}\sym\Big(\begin{array}{cc}\nabla' {u}^0(t,x')-x_3(\nabla')^2 {v}^0(t,x')&0\\0&0\end{array}\Big)+o(\ep^{\alpha-1}),
\end{eqnarray*}
and
\begin{equation}
\label{sqrtdeco}
\sqrt{(\nabla \pep)^T (t,(x',\ep x_3))\nabla \pep (t, (x',\ep x_3))}=Id+\ep^{\alpha-1}M(t,x)+o(\ep^{\alpha-1}),
\end{equation}
where
$$M(t,x)=\sym\Big(\begin{array}{cc}\nabla' {u}^0(t,x')-x_3(\nabla')^2 {v}^0(t,x')&0\\0&0\end{array}\Big)\quad\text{for every }x\in\Omega.$$
Therefore,
$$\frac{1}{\ep^{2\alpha-2}}W_{el}\big(\nabla \pep(t, (x',\ep x_3))\big)=\frac{1}{\ep^{2\alpha-2}}W_{el}\big(Id+\ep^{\alpha-1}M(t,x)+o(\ep^{\alpha-1})\big)$$
for every $x\in \Omega$. Now, by the smoothness of $u^0$ and $v^0$, there exists a constant $C$ such that 
\begin{equation}
\label{unifquad}
\sup_{t\in [0,T]}\|M(t)\|_{L^{\infty}(\Omega;\mthree)}\leq C
\end{equation} and there exist $\overline{\ep}$ such that, if $\ep<\overline{\ep}$, for every $t\in [0,T]$ 
$$|\ep^{\alpha-1}M(t)+o(\ep^{\alpha-1})|\leq c_{el}(1),$$
where $c_{el}$ is the constant in \eqref{quadrwel}.
 Therefore, by \eqref{quadrwel}, \eqref{growthcondQ}, and \eqref{unifquad} we have  
\begin{equation}
\label{ubnp}\frac{1}{\ep^{2\alpha-2}}\intom{W_{el}\big(\nabla \pep(t, (x',\ep x_3))\big)}\leq C\Big(\intom{|M(t)|^2}+1\Big)\leq C
\end{equation}
for every $\ep$ and for all $t\in [0,T]$.

By combining \eqref{estiniz}, \eqref{estiniz2} and \eqref{ubnp} we obtain
\begin{eqnarray}
\nonumber &&\frac{1}{\ep^{2\alpha-2}}\intom{W_{el}\big(\nep y^{\ep}(t)(P^{\ep})^{-1}(t)\big)}+\frac{1}{\ep^{2\alpha-2}}\intom{W_{hard}(P^{\ep}(t))}\\
\nonumber &&\leq C\Big(1+\frac{1}{\ep^{\alpha-1}}\|Id-P^{\ep}(t)\|_{L^2(\Omega;\mthree)}\Big).\\
\label{estiniz3}
\end{eqnarray}
Now, by \eqref{prh3} there holds 
$$\frac{c_6}{\ep^{2\alpha-2}}\intom{|Id-P^{\ep}(t)|^2}\leq C\Big(1+\frac{1}{\ep^{\alpha-1}}\|Id-P^{\ep}(t)\|_{L^2(\Omega;\mthree)}\Big),$$
which in turn, by Cauchy inequality implies
\begin{equation}
\label{part1ubd}
\|p^{\ep}(t)\|_{L^2(\Omega;\mthree)}=\frac{1}{\ep^{\alpha-1}}\|Id-P^{\ep}(t)\|_{L^2(\Omega;\mthree)}\leq C
\end{equation}
for every $\ep$ and for all $t\in [0,T]$. On the other hand, by \eqref{estiniz3} and \eqref{part1ubd}, we deduce
\begin{equation}
\label{unelt}
\frac{1}{\ep^{2\alpha-2}}\intom{W_{el}\big(\nep y^{\ep}(t)(P^{\ep})^{-1}(t)\big)}\leq C,
\end{equation}
for every $\ep$ and for all $t\in [0,T]$. Estimate \eqref{unifapest} follows now by \eqref{unifbdpep}, \eqref{part1ubd}, \eqref{unelt} and the growth condition (H4).\\
{\em{Step 1: A priori estimate on the dissipation functional}}.\\
In this step we shall show that there exists a constant $C$, such that
\begin{equation}
\label{bunifapest}
\frac{1}{\ep^{\alpha-1}}\cal{D}(P^{\ep};0,t)\leq C\quad\text{for every }\ep\text{ and for all }t\in [0,T].
\end{equation}
By (qs2), \eqref{convE0} and \eqref{estiniz3}--\eqref{unelt} it is enough to show that there exists a constant $C$ such that
\begin{equation}
\label{unifbdeep}
\Big|\frac{1}{\ep^{\alpha-1}}{\intom{E^{\ep}(t):\nabla \dot{\pep}(t,\zep(t))(\nabla \pep)^{-1}(t,\zep(t))}}\Big|\leq C
\end{equation}
for every $\ep$ and $t\in [0,T]$. To prove \eqref{unifbdeep}, we first deduce some properties of the map $t\mapsto E^{\ep}(t)$. 

Let $R\in SO(3)$. By \eqref{prk1} and \eqref{isink} there holds
\begin{eqnarray*}
&&|\nep y^{\ep}(t)-R|^2=|\nep y^{\ep}(t)-RP^{\ep}(t)+\ep^{\alpha-1}Rp^{\ep}(t)|^2\\
&&\leq 2|\nep y^{\ep}(t)(P^{\ep})^{-1}(t)-R|^2|P^{\ep}(t)|^2+2\ep^{2\alpha-2}|p^{\ep}(t)|^2\\
&&\leq 2\,c_K^2|\nep y^{\ep}(t)(P^{\ep})^{-1}(t)-R|^2+2\ep^{2\alpha-2}|p^{\ep}(t)|^2.
\end{eqnarray*}
Hence, the growth condition (H4) implies
$$\|\dist(\nep y^{\ep}(t),SO(3))\|^2_{L^2(\Omega;\mthree)}\leq C\Big(\intom{W_{el}(\nep y^{\ep}(t)(P^{\ep})^{-1}(t))}+\ep^{2\alpha-2}\|p^{\ep}(t)\|^2_{L^2(\Omega;\mthree)}\Big),$$
which in turn yields
\begin{equation}
\label{unifdist}
\|\dist(\nep y^{\ep}(t),SO(3))\|^2_{L^2(\Omega;\mthree)}\leq C\ep^{2\alpha-2}
\end{equation}
by \eqref{unifapest} and \eqref{unelt}.
By \eqref{bdtep} and \eqref{unifdist}, the sequence $y^{\ep}(t)$ fulfills the hypotheses of Theorem \ref{compactbd1}. Hence, for every $t\in [0,T]$ there exists a sequence of maps $(R^{\ep}(t))\subset W^{1,\infty}(\omega;\mthree)$ such that 
\begin{eqnarray}
&&\label{rt1t} R^{\ep}(t,x')\in SO(3)\quad\text{for every }x'\in \omega,\\
&&\label{rt2t} \|\nep y^{\ep}(t)-R^{\ep}(t)\|_{L^2(\Omega;\mthree)}\leq C\ep^{\alpha-1},\\
&&\label{rt3t} \|\partial_i R^{\ep}(t)\|_{L^2(\omega;\mthree)}\leq C\ep^{\alpha-2},\quad i=1,2,\\
&&\label{rt4t} \|R^{\ep}(t)-Id\|_{L^2(\omega;\mthree)}\leq C\ep^{\alpha-2},
\end{eqnarray} 
where the constant $C$ is independent of $\ep$ and $t$. 

We consider the auxiliary maps
$$w^{\ep}(t):=\frac{(Id+\ep^{\alpha-1}p^{\ep}(t))^{-1}-Id+\ep^{\alpha-1}p^{\ep}(t)}{\ep^{\alpha-1}},$$
the elastic strains
$$G^{\ep}(t):=\frac{(R^{\ep}(t))^T\nep y^{\ep}(t)-Id}{\ep^{\alpha-1}},$$
and the matrices
\begin{equation}
\label{deffept}
F^{\ep}(t):=G^{\ep}(t)+w^{\ep}(t)-p^{\ep}(t)+\ep^{\alpha-1}G^{\ep}(t)(w^{\ep}(t)-p^{\ep}(t)),
\end{equation}
for all $t\in [0,T]$. Clearly we have
\begin{equation}
\label{decompep}
(P^{\ep})^{-1}(t)=Id+\ep^{\alpha-1}(w^{\ep}(t)-p^{\ep}(t))\quad\text{and}\quad \nep y^{\ep}(t)=R^{\ep}(t)(Id+\ep^{\alpha-1}G^{\ep}(t)).
\end{equation}
Since 
\begin{equation}
\label{numerarepag35}
w^{\ep}(t)=\ep^{\alpha-1}(Id+\ep^{\alpha-1}p^{\ep}(t))^{-1}(p^{\ep}(t))^2
\end{equation}
for every $t\in [0,T]$, by \eqref{unifapest} and \eqref{isink} there holds
\begin{equation}
\label{linftybdw}
\|\ep^{\alpha-1}w^{\ep}(t)\|_{L^{\infty}(\Omega;\mthree)}\leq C\quad\text{for every }t\in [0,T],
\end{equation}
\begin{equation}
\label{numerarepag352}
\|w^{\ep}(t)\|_{L^1(\Omega;\mthree)}\leq C\ep^{\alpha-1}\quad\text{for every }t\in [0,T],
\end{equation}
and
\begin{equation}
\label{l2bdw}
\|w^{\ep}(t)\|_{L^2(\Omega;\mthree)}\leq C\quad\text{for every }t\in [0,T].
\end{equation}
Combining \eqref{numerarepag352} and \eqref{l2bdw} we deduce
\begin{equation}
\label{weakl2w}
w^{\ep}(t)\deb 0\quad\text{weakly in }L^2(\Omega;\mthree)\text{ for every }t\in [0,T].
\end{equation}
On the other hand, \eqref{rt1t} and \eqref{rt2t} yield
\begin{equation}
\label{unifbdrt2}
\|G^{\ep}(t)\|_{L^2(\Omega;\mthree)}\leq C
\end{equation}
for every $\ep$ and for all $t\in [0,T]$.
Collecting \eqref{unifapest}, \eqref{linftybdw}, \eqref{l2bdw} and \eqref{unifbdrt2}, we obtain  
\begin{eqnarray}
\nonumber&&\|F^{\ep}(t)\|_{L^2(\Omega;\mthree)}\leq \|G^{\ep}(t)\|_{L^2(\Omega;\mthree)}+\|w^{\ep}(t)\|_{L^2(\Omega;\mthree)}+\|p^{\ep}(t)\|_{L^2(\Omega;\mthree)}\\
\label{unifbdfep}&&+\|G^{\ep}(t)\|_{L^2(\Omega;\mthree)}\|\ep^{\alpha-1}(w^{\ep}(t)-p^{\ep}(t))\|_{L^{\infty}(\Omega;\mthree)}\leq C
\end{eqnarray}
for every $\ep$ and for all $t\in [0,T]$.

Now, by \eqref{deffept}, \eqref{decompep} and the frame-indifference (H3) of $W_{el}$ we deduce the decomposition
\begin{equation}
\label{stressrot}
\eep(t)=R^{\ep}(t)\eepp(t)(R^{\ep}(t))^T
\end{equation}
for every $t\in [0,T]$, where
\begin{equation}
\nonumber
\eepp(t):=\frac{1}{\ep^{\alpha-1}}DW_{el}(Id+\ep^{\alpha-1}F^{\ep}(t))(Id+\ep^{\alpha-1}F^{\ep}(t))^T.
\end{equation}
We argue as in \cite[Proof of Theorem 3.1, Steps 2--3]{M-S} and we first show that there exist two positive constants $k_1, k_2$, independent of $\ep$, such that
\begin{equation}
\label{claimunifinteg}
|\eepp(t)|\leq k_1\Big(\frac{W_{el}(Id+\ep^{\alpha-1}F^{\ep}(t))}{\ep^{\alpha-1}}+k_2|F^{\ep}(t)|\Big)
\end{equation}
for every $t\in [0,T]$ and for a.e. $x\in \Omega$. 

Indeed, let $c_{el_2}$ be the constant in \eqref{locquad}.
Suppose that $\ep^{\alpha-1}|F^{\ep}(t)|\geq c_{el_2}$. We remark that (H1), \eqref{isink} and \eqref{unelt} imply in particular that 
$$\det (\nep y^{\ep}(t))>0\quad\text{a.e. in }\Omega.$$
Therefore, by \eqref{mandel2} there holds
\begin{equation}
\label{toolclaimstress1}
|\eepp(t)|\leq \frac{c_3}{\ep^{\alpha-1}}\Big(W_{el}(Id+\ep^{\alpha-1}F^{\ep}(t))+1\Big)\leq {c_3}\Big(\frac{W_{el}(Id+\ep^{\alpha-1}F^{\ep}(t))}{\ep^{\alpha-1}}+\frac{1}{c_{el_2}}|F^{\ep}(t)|\Big).
\end{equation}
Consider now the case where $\ep^{\alpha-1}|F^{\ep}(t)|< c_{el_2}$. Then, by \eqref{locquad} there holds
$$DW_{el}(Id+\ep^{\alpha-1}F^{\ep}(t))\leq\ep^{\alpha-1}(2\bc+1)|F^{\ep}(t)|,$$
which in turn implies
\begin{equation}
\label{toolclaimstress2}
|\eepp(t)|\leq C|F^{\ep}(t)|(|Id|+|\ep^{\alpha-1}F^{\ep}(t)|)\leq C|F^{\ep}(t)|.
\end{equation}
Collecting \eqref{toolclaimstress1} and \eqref{toolclaimstress2}, we obtain \eqref{claimunifinteg}.

By \eqref{unelt}, \eqref{unifbdfep} and \eqref{claimunifinteg}, for every measurable $\Lambda\subset \Omega$, the following estimate holds true:
\begin{equation}
\label{unifinteg}
\int_{\Lambda}{|\eepp(t)|\,dx}\leq k_1\int_{\Lambda}{\Big(\frac{W_{el}(Id+\ep^{\alpha-1}F^{\ep}(t))}{\ep^{\alpha-1}}+k_2|F^{\ep}(t)|\Big)}\leq C(|\Lambda|^{\frac{1}{2}}+\ep^{\alpha-1}),
\end{equation}
for every $\ep$ and for all $t\in [0,T]$. By \eqref{rt1t} there holds also
\begin{equation}
\label{unifintegnt}
\int_{\Lambda}{|E^{\ep}(t)|\,dx}\leq C(|\Lambda|^{\frac{1}{2}}+\ep^{\alpha-1}),
\end{equation}
for every $\ep$ and for all $t\in [0,T]$.

Let now $\gamma\in (0,\alpha-2)$ be the positive constant in the definition of the maps $\tep$. Let $O_{\ep}(t)$ be the set given by
$$O_{\ep}(t):=\{x\in \Omega:\,{\ep}^{\alpha-1-\gamma}|F^{\ep}(t,x)|\leq c_{el_2}\},$$
and let $\chi_{\ep}(t):\Omega\to \{0,1\}$ be the map 
$$\chi_{\ep}(t,x)=\begin{cases}1&\text{if }x\in O_{\ep}(t),\\
 0&\text{otherwise}.
 \end{cases}$$ By Chebychev inequality and \eqref{unifbdfep} we deduce \begin{equation}
\label{measbep}
\cal{L}^3(\Omega \setminus O_{\ep}(t))\leq C {\ep}^{2(\alpha-1-\gamma)},
\end{equation}
for every $\ep$ and for all $t\in [0,T]$. By combining \eqref{unifinteg} and \eqref{measbep} we conclude that
\begin{equation}
\label{badsetstress}
\|(1-\chi_{\ep}(t)){\tilde{E}}^{\ep}(t)\|_{L^1(\Omega;\mthree)}\leq C\ep^{\alpha-1-\gamma}\quad\text{for every }t\in [0,T].
\end{equation}
By \eqref{unifintegnt} the previous estimate implies also
\begin{equation}
\label{badsetstressnt}
\|(1-\chi_{\ep}(t)){{E}}^{\ep}(t)\|_{L^1(\Omega;\mthree)}\leq C\ep^{\alpha-1-\gamma}\quad\text{for every }t\in [0,T].
\end{equation}
On the other hand \eqref{locquad} yields  the following estimate on the sets $O_{\ep}(t)$:
\begin{eqnarray*}
|\chi_{\ep}(t){\tilde{E}}^{\ep}(t)|\leq (2\bc+1)|F^{\ep}(t)||Id+\ep^{\alpha-1}F^{\ep}(t)|\leq C(1+c_{el_2}\ep^{\gamma})|F^{\ep}(t)|,
\end{eqnarray*}
which in turn, by \eqref{unifbdfep}, implies
\begin{equation}
\label{goodsetsufb}
\|\chi_{\ep}(t){\tilde{E}}^{\ep}(t)\|_{L^2(\Omega;\mthree)}\leq C
\end{equation}
for every $\ep$ and for all $t\in [0,T]$.

By \eqref{lp1}, \eqref{rt1t}, \eqref{badsetstressnt} and \eqref{goodsetsufb}, and since $E^{\ep}(t)$ is symmetric by Remark \ref{Esym}, to prove \eqref{unifbdeep} it is enough to show that there exists a constant $C$ such that
  \begin{equation}
 \label{linftybdwk}
 \Big\|\frac{1}{\ep^{\alpha-1}}\nabla \dot{\pep}(t,\zep(t))(\nabla \pep)^{-1}(t,\zep(t))\Big\|_{L^{\infty}(\Omega;\mthree)}\leq C\ell_{\ep}
 \end{equation}
 and
 \begin{equation}
 \label{l2bdwk}
 \Big\|\frac{1}{\ep^{\alpha-1}}\sym\Big(\nabla \dot{\pep}(t,\zep(t))(\nabla \pep)^{-1}(t,\zep(t))\Big)\Big\|_{L^{2}(\Omega;\mthree)}\leq C
 \end{equation}
for every $\ep$ and for all $t\in [0,T]$. By \eqref{gradpep}, there holds
\begin{eqnarray}
\nonumber&&\frac{1}{\ep^{\alpha-1}}\nabla \dot{\pep}(t,\zep(t))=\Big(\begin{array}{cc}\nabla' \dot{u}^0(t,\bzep(t))-\tep\big(\frac{\zep_3(t)}{\ep}\big)(\nabla')^2 \dot{v}^0(t,\bzep(t))&0\\0&0\end{array}\Big)\\
\label{gradpept}&&+\frac{1}{\ep}\Big(\begin{array}{cc}0&-\dot{\tep}\big(\frac{\zep_3(t)}{\ep}\big)\nabla' \dot{v}^0(t,\bzep(t))\\(\nabla' \dot{v}^0(t,\bzep(t)))^T&0\end{array}\Big).
\end{eqnarray}
Estimate \eqref{linftybdwk} follows directly by \eqref{treq3}, \eqref{treq5}, \eqref{lp2}, and \eqref{bddinverse}. To prove \eqref{l2bdwk}, we first provide an estimate for the $L^2$ norm of the maps $\frac{1}{\ep}\zep_3(t)$. To this purpose, let $v^{\ep}(t)$ be defined as in \eqref{cvvt}. It is easy to see that
$$v^{\ep}(t)=\frac{1}{\ep^{\alpha-2}}\intt{y^{\ep}_3(t)}\quad\text{and}\quad\nabla' v^{\ep}(t)=\frac{1}{\ep^{\alpha-2}}\intt{{\nabla'y^{\ep}_3(t)}}$$
for every $\ep$ and for all $t\in [0,T]$.
By \eqref{bdtep}, arguing as in the proof of Theorem \ref{compactbd1},
$$v^{\ep}(t)=v^0(t)\quad\cal{H}^1\text{ - a.e. on }\gamma_d.$$
 By \eqref{rt2t} and \eqref{rt4t}, we have
$$\|\nabla' v^{\ep}(t)\|_{L^2(\omega;\R^2)}\leq C$$
for every $\ep$ and $t\in [0,T]$. Hence, by Poincar\'e inequality we deduce
$$\|v^{\ep}(t)-v^0(t)\|_{L^2(\omega)}\leq C\|\nabla' v^{\ep}(t)-\nabla' v^0(t)\|_{L^2(\omega;\R^2)}\leq C,$$
which in turn, by the smoothness of $v^{0}$, yields
$$\|v^{\ep}(t)\|_{L^2(\omega)}\leq C\quad\text{for every }\ep\text{ and for all }t\in [0,T].$$
By \eqref{rt2t}, \eqref{rt4t} and Poincar\'e-Wirtinger inequality, we deduce
\begin{equation}
\label{estimate3}
\Big\|\frac{y^{\ep}_3(t)}{\ep}-x_3-\ep^{\alpha-3}v^{\ep}(t)\Big\|_{L^2(\Omega)}\leq C\Big\|\frac{\partial_3 y^{\ep}_3(t)}{\ep}-1\Big\|_{L^2(\Omega)}\leq C\ep^{\alpha-2}
\end{equation}
for every $t\in [0,T]$, which implies
\begin{equation}
\label{uf3}
 \Big\|\frac{y^{\ep}_3(t)}{\ep}\Big\|_{L^2(\Omega)}\leq C\quad\text{for every }\ep\text{ and }t\in [0,T].
\end{equation}
On the other hand,
\begin{equation}
\label{relzep}
z^{\ep}(t)=\vep(t,y^{\ep}(t))\quad\text{a.e. in }\Omega,
\end{equation}
hence by \eqref{3comp},
\begin{equation}
\label{zep3}
\frac{\zep_3(t)}{\ep}=\frac{y^{\ep}_3(t)}{\ep}-\ep^{\alpha-3}v^0(t,({\varphi}^{\ep})'(t,y^{\ep}(t))).
\end{equation}
Therefore \eqref{treq2} and \eqref{uf3} yield
\begin{equation}
\label{l2usf}
\Big\|\tep\Big(\frac{\zep_3(t)}{\ep}\Big)\Big\|_{L^2(\Omega)}\leq \Big\|\frac{\zep_3(t)}{\ep}\Big\|_{L^2(\Omega)}\leq C \quad\text{for every }\ep\text{ and }t\in [0,T].
\end{equation}
By Lemma \ref{cvproptep}, we deduce 
\begin{equation}
\label{l2usf2}
\Big\|1-\dot{\tep}\Big(\frac{\zep_3(t)}{\ep}\Big)\Big\|_{L^2(\Omega)}\leq \frac{2}{\ell_{\ep}}\quad\text{for every }\ep\text{ and }t\in [0,T].
\end{equation}
Collecting \eqref{lp2}, \eqref{gradpept}, \eqref{l2usf} and \eqref{l2usf2}, we deduce that there exists a constant $C$ such that
$$\Big\|\frac{1}{\ep^{\alpha-1}}\sym \nabla \dot{\pep}(t,\zep(t))\Big\|_{L^2(\Omega;\mthree)}\leq C$$
for every $\ep$ and for all $t\in [0,T]$. Therefore, to prove \eqref{l2bdwk}, it remains only to study the quantity
$$\frac{1}{\ep^{\alpha-1}}\sym\Big(\nabla \dot{\pep}(t,\zep(t))\big((\nabla \pep)^{-1}(t,\zep(t))-Id\big)\Big).$$
By \eqref{bddinverse}, 
$$\|(\nabla \pep(t))^{-1}-Id\|_{L^{\infty}(\Omega;\mthree)}\leq C\quad\text{for every }\ep\text{ and }t\in [0,T].$$
By \eqref{l2usf}, the first term in the right hand side of \eqref{gradpept} is uniformly bounded in $L^2(\Omega;\mthree)$. Therefore, it remains to show that
\begin{equation}
\label{newstar}
\frac{1}{\ep}\Big(\begin{array}{cc}0&-\dot{\tep}\big(\frac{\zep_3(t)}{\ep}\big)\nabla' \dot{v}^0(t,\bzep(t))\\(\nabla' \dot{v}^0(t,\bzep(t)))^T&0\end{array}\Big)\Big((\nabla \pep)^{-1}(t,\zep(t))-Id\Big)
\end{equation}
is uniformly bounded in $L^2(\Omega;\mthree)$.

By \eqref{treq5} and by the smoothness of $v^0$, there holds
\begin{equation}
\label{estt1}
\Big\|\frac{1}{\ep}\Big(\begin{array}{cc}0&-\dot{\tep}\big(\frac{\zep_3(t)}{\ep}\big)\nabla' \dot{v}^0(t,\bzep(t))\\(\nabla' \dot{v}^0(t,\bzep(t)))^T&0\end{array}\Big)\Big\|_{L^{\infty}(\Omega;\mthree)}\leq \frac{C}{\ep}
\end{equation}
for every $t\in [0,T]$.
On the other hand,
\begin{equation}
\label{estt2bis}
(\nabla \pep)^{-1}(t,\zep(t))=\nabla \vep(t,y^{\ep}(t))\quad\text{a.e. in }\Omega.
\end{equation}
Property \eqref{gradinv1} yields the estimate
\begin{equation}
\label{estt3}
\|\nabla \vep_3(t,y^{\ep}(t))-e_3\|_{L^{\infty}(\Omega;\R^3)}\leq C\ep^{\alpha-2}
\end{equation}
for every $t\in[0,T]$, whereas by \eqref{treq5}, \eqref{expgr} and \eqref{bddinverse} 
$$\|\nabla \vep_i(t,y^{\ep}(t))-e_i\|_{L^{2}(\Omega;\R^3)}\leq C\ep^{\alpha-1}\Big\|\tep\Big(\frac{\vep_3(t,y^{\ep}(t))}{\ep}\Big)\Big\|_{L^2(\omega)}+C\ep^{\alpha-2},$$
hence by \eqref{relzep} and \eqref{l2usf} we obtain
\begin{equation}
\label{estt4}
\|\nabla \vep_i(t,y^{\ep}(t))-e_i\|_{L^{2}(\Omega;\R^3)}\leq C\ep^{\alpha-2}.
\end{equation}
By combining \eqref{estt1}--\eqref{estt4}, we deduce
\begin{eqnarray}
\nonumber
&&\Big\|\frac{1}{\ep}\Big(\begin{array}{cc}0&-\dot{\tep}\big(\frac{\zep_3(t)}{\ep}\big)\nabla' \dot{v}^0(t,\bzep(t))\\(\nabla' \dot{v}^0(t,\bzep(t)))^T&0\end{array}\Big)\Big((\nabla \pep)^{-1}(t,\zep(t))-Id\Big)\Big\|_{L^2(\Omega;\mthree)}\\
\label{utiledopo}&&\leq C\ep^{\alpha-3}
\end{eqnarray}
for every $\ep$ and $t\in [0,T]$, therefore the quantity in \eqref{newstar} is uniformly bounded in $L^2(\Omega;\M^{3\times 3})$, and the proof of \eqref{l2bdwk} is complete.
By \eqref{badsetstress}--\eqref{l2bdwk}, since all estimates are uniform both in $\ep$ and $t$, we deduce \eqref{unifbdeep}, which in turn yields \eqref{bunifapest}.\\
{\em{Step 2: Reduced Stability}}\\
Owing to the a priori bounds \eqref{unifapest} and \eqref{bunifapest}, we can apply the generalized version of Helly's Selection Principle in \cite[Theorem A.1]{MRS}. To show it, take $\cal{Z}:=L^2(\Omega;\mthree)$ endowed with the weak topology of $L^2$, and set
$$\cal{D}_{\ep}(z_1,z_2):=\frac{1}{\ep^{\alpha-1}}\intom{D(Id+\ep^{\alpha-1}z_1, Id+\ep^{\alpha-1}z_2)}$$
and
$$\cal{D}_{\infty}(z_1,z_2):=\intom{H(z_2-z_1)}$$
for every $z_1,z_2\in L^2(\Omega;\mthree)$. Hypotheses (A.1) and (A.2) of \cite[Theorem A.1]{MRS} are satisfied by \eqref{growthh}--\eqref{liminfdiss}. Hypothesis (A.3) follows by adapting \cite[Lemmas 3.4 and 3.5]{MS}, whereas condition (A.4) follows directly by \eqref{unifapest} and \eqref{bunifapest}. 
Hence, by \cite[Theorem A.1]{MRS}, there holds
\begin{equation}
\label{convpept} 
\begin{array}{c}  p^{\ep}(t)\deb p(t)\quad\text{weakly in }L^2(\Omega;\mthree)\quad\text{for every }t\in [0,T],\vspace{0.2 cm}\\
\cal{D}_{H_D}(p;0,t)\leq \displaystyle{\liminf_{\ep\to 0}}\frac{1}{\ep^{\alpha-1}}\cal{D}(P^{\ep};0,t)\quad\text{for every }t\in [0,T].
 \end{array}
\end{equation}
Moreover, by $\eqref{convP0}$, $p(0)=\mathring{p}$.

Let now $t\in [0,T]$ be fixed. By \eqref{bdtep}, \eqref{rt2t}, \eqref{rt4t} and Poincar\'e inequality, up to subsequences there holds
\begin{equation}
\label{cvitsb}
y^{\ep}(t)\to\Big(\begin{array}{c}x'\\0\end{array}\Big)\quad\text{strongly in }W^{1,2}(\Omega;\R^3).
\end{equation}
Arguing as in the proof of Theorem \ref{liminfineq} and owing to \eqref{unifapest}, we deduce the existence of a pair $(u^*(t),v^*(t))\in W^{1,2}(\omega;\R^2)\times W^{2,2}(\omega)$ such that $(u^*(t),v^*(t),p(t))\in\cal{A}(u^0(t),v^0(t))$ and a sequence $\ep_j\to 0$ such that
\begin{eqnarray}
\label{cvu+}&&u^{\ep_j}(t)\deb u^*(t)\quad\text{weakly in }W^{1,2}(\omega;\R^2),\\
\label{cvv+}&&v^{\ep_j}(t)\to v^*(t)\quad\text{strongly in }W^{1,2}(\omega).
\end{eqnarray}
In particular, by \eqref{convu0} and \eqref{convv0} we have $u^*(0)=\mathring{u}$ and $v^*(0)=\mathring{v}$.
By \eqref{unifbdrt2} up to extracting a further subsequence, there exists a map $G^*(t)\in L^2(\Omega;\mthree)$ such that
\begin{equation}
\label{debsubsgep}
G^{\ep_j}(t)\deb G^*(t)\quad\text{weakly in }L^2(\Omega;\mthree)
\end{equation}
and the $2\times 2$ submatrix $(G^*)'(t)$ satisfies
\begin{equation}
\label{gaff+}
(G^*)'(t,x) = G^*_0(t,x') - x_3 (\nabla')^2 v^*(t,x')\quad\text{for a.e. }x\in\Omega,
\end{equation}
where
\begin{eqnarray}
&&\label{Ga3+} \sym\, G^*_0(t) = \sym\,\nabla' u^*(t).
\end{eqnarray}

We shall show that the triple $(u^*(t),v^*(t),p(t))$ satisfies the reduced stability condition (qs1$_{r\alpha}$). By {Lemma \ref{bdc}}, it is enough to prove the inequality for triples $(\hat{u},\hat{v},\hat{p})\in\cal{A}(u^0(t),v^0(t))$ such that 
{\begin{eqnarray*}
&&\tilde{u}:=\hat{u}-u^*(t)\in C^{\infty}_c(\omega\cup\gamma_n;\R^2),\\
&&\tilde{v}:=\hat{v}-v^*(t)\in C^{\infty}_c(\omega\cup\gamma_n),\\
&&\tilde{p}:=\hat{p}-p^*(t)\in C^{\infty}_c(\Omega;\md).
\end{eqnarray*}}
By Theorem \ref{mutrecseq} there exists a sequence $(\hat{y}^{\ep_j},\hat{P}^{\ep_j})\in\cal{A}_{\ep_j}(\pepjt(t))$ satisfying
\begin{eqnarray*}
&& \intom{Q_2(\sym\,\hat{G}'-\hat{p}')}+\intom{\B{\hat{p}}}\\
&&-\intom{Q_2(\sym\,({G}^*)'(t)-{p}'(t))}-\intom{\B{p(t)}}+\intom{H_D(\hat{p}-p(t))}\\
&&\geq \limsup_{\ep_j\to 0} \Big\{\frac{1}{{\ep_j}^{2\alpha-2}}\intom{W_{el}({\nep}_j {\hat{y}}^{\ep_j} ({\hat{P}}^{\ep_j})^{-1})}+\frac{1}{{\ep_j}^{2\alpha-2}}\intom{W_{hard}({\hat{P}}^{\ep_j})}\\
&&-\frac{1}{{\ep_j}^{2\alpha-2}}\intom{W_{el}({\nep}_j y^{\ep_j}(t) (P^{\ep_j})^{-1}(t))}-\frac{1}{{\ep_j}^{2\alpha-2}}\intom{W_{hard}(P^{\ep_j}(t))}\\
&&+\frac{1}{{\ep_j}^{\alpha-1}}\intom{D(P^{\ep_j}(t),\hat{P}^{\ep_j})}\Big\}\\
\end{eqnarray*}
 where 
\begin{equation}
\nonumber
\hat{G}'(x', x_3) := \hat{G}_0(x') - x_3 (\nabla')^2 \hat{v}(x')\quad\text{a.e. in }\Omega,
\end{equation}
and
\begin{eqnarray}
&&\nonumber \sym\, \hat{G}_0 = \sym\, \nabla' \hat{u}.
\end{eqnarray}
Inequality (qs1$_{r\alpha}$) follows now by the $\ep$-stability (qs1) of $(y^{\ep}(t),P^{\ep}(t))$. 

By strict convexity of the quadratic form $Q_2$, an adaptation of \cite[Theorem 3.8]{DDM} yields that, once $p(t)$ is identified, there exist unique $u(t)\in W^{1,2}(\omega;\R^2)$ and $v(t)\in W^{2,2}(\omega)$ such that (qs1$_{r\alpha}$) holds at time $t$. This implies that $u^*(t)=u(t)$, $v^*(t)=v(t)$ for every $t\in [0,T]$ and both \eqref{cvu+} and \eqref{cvv+} hold for the whole sequences $u^{\ep}(t)$ and $v^{\ep}(t)$ and for every $t\in [0,T]$. Moreover, by \eqref{debsubsgep}--\eqref{Ga3+} we have
\begin{equation}
\nonumber
\sym\,{(G^*)}'(t) = \sym\,\nabla'u(t) - x_3 (\nabla')^2 v(t)
\end{equation}
and 
$$\sym\,(G^{\ep})'(t)\deb  \sym\,\nabla'u(t) - x_3 (\nabla')^2 v(t)\quad\text{weakly in }L^2(\Omega;\mthree)\quad\text{for every }t\in [0,T].$$
{\em{Step 3: Convergence of the scaled stress}}\\
In this step we shall show that for every $t\in[0,T]$ there exists a subsequence $\ep_j$, possibly depending on $t$, such that 
\begin{equation}
\label{goodsetstress}
\chi_{\ep_j}(t)E^{\ep_j}(t)\deb E^*(t)\quad\text{weakly in }L^2(\Omega;\mthree),
\end{equation}
where 
\begin{equation}
\label{cestar}
E^*(t)=\C(G^*(t)-p(t)).
\end{equation}

To this purpose, for $t\in [0,T]$ fixed, let $\ep_j\to 0$ be such that \eqref{debsubsgep} holds and let $F^{\ep_j}(t)$ be the map defined in \eqref{deffept}.
By \eqref{unifapest}, \eqref{linftybdw} and \eqref{debsubsgep} we deduce
$$\|\ep^{\alpha-1}G^{\ep_j}(t)(w^{\ep_j}(t)-p^{\ep_j}(t))\|_{L^2(\Omega;\mthree)}\leq C\quad\text{for every }\ep_j.$$
On the other hand, by \eqref{unifapest}, \eqref{l2bdw}, and \eqref{debsubsgep}, there holds
\begin{equation}
\label{numerarepag39}
\ep^{\alpha-1}G^{\ep_j}(t)(w^{\ep_j}(t)-p^{\ep_j}(t))\to 0\quad\text{strongly in }L^1(\Omega;\mthree).
\end{equation}
Hence, we conclude that
\begin{equation}
\label{toolconvfep}
\ep^{\alpha-1}G^{\ep_j}(t)(w^{\ep_j}(t)-p^{\ep_j}(t))\deb 0\quad\text{weakly in }L^2(\Omega;\mthree).
\end{equation}
Collecting \eqref{deffept}, \eqref{weakl2w}, \eqref{convpept}, \eqref{debsubsgep} and \eqref{toolconvfep} we obtain
\begin{equation}
\label{convfep}
F^{\ep_j}(t)\deb G^*(t)-p(t)\quad\text{weakly in }L^2(\Omega;\mthree).
\end{equation}
By \eqref{measbep} we deduce that $\chi_{\ep_j}(t)\to 1$ boundedly in measure, therefore by \eqref{convfep} there holds
\begin{equation}
\nonumber
\chi_{\ep_j}(t)F^{\ep_j}(t)\deb G^*(t)-p(t)\quad \text{weakly in }L^2(\Omega;\mthree).
\end{equation}
Now, estimate \eqref{unifinteg} implies that the sequence $({\tilde{E}}^{\ep_j}(t))$ is uniformly bounded in $L^1(\Omega;\mthree)$ and is equiintegrable, hence by  Dunford-Pettis Theorem, up to extracting a further subsequence, there exists ${E}^*(t)\in L^1(\Omega;\ms)$ such that
\begin{equation}
\nonumber
{\tilde{E}}^{\ep_j}(t)\deb {E}^*(t)\quad\text{weakly in }L^1(\Omega;\mthree).
\end{equation}
Using a Taylor expansion argument in $O_{\ep}(t)$, and arguing as in \cite[Proof of Theorem 3.1, Step 3]{M-S} we deduce
\begin{equation}
\nonumber
\chi_{\ep_j}(t){\tilde{E}}^{\ep_j}(t)\deb \C (G^*(t)-p(t))\quad\text{weakly in }L^2(\Omega;\ms).
\end{equation}
By \eqref{rt1t} and \eqref{rt4t}, the sequence $(R^{\ep}(t))$ converges boundedly in measure to the identity,  hence the previous convergence implies in particular \eqref{goodsetstress} and \eqref{cestar}.\\
{\em{Step 4: Characterization of the limit stress}}\\
In this step we shall show that
\begin{equation}
\label{claimstress}
E^*(t)=\C_2 (\sym\,\nabla' u(t)-x_3(\nabla')^2v(t)-p'(t)):=E(t)\quad\text{for every }t\in [0,T].
\end{equation}
This, in turn, will imply that all convergence properties established in the previous step hold for the entire sequences and for every $t\in [0,T]$.

We first remark that, choosing $\tilde{P}=P^{\ep}(t)$ in (qs1) there holds
\begin{equation}
\label{minimel}
\intom{W_{el}(\nep y^{\ep}(t)(P^{\ep})^{-1}(t))}\leq \intom{W_{el}(\nep \tilde{y} (P^{\ep})^{-1}(t))},
\end{equation}
for every $\tilde{y}\in W^{1,2}(\Omega;\R^3)$ such that {$\tilde{y}=\phi^{\ep}(t,(x',\ep x_3))\quad\cal{H}^2\text{ - a.e. on }\Gamma_d$}.

{Let $\eta\in W^{1,\infty}(\R^3;\R^3)\cap C^{\infty}(\R^3;\R^3)$ be such that $\eta\circ \phi^{\ep}(t,(x',\ep x_3))=0\quad\cal{H}^2\text{ - a.e. on }\Gamma_d$.} Then, in particular, we can consider in \eqref{minimel} inner variations of the form $y^{\ep}+\lambda \eta \circ y^{\ep}$, where $\lambda \in \R$. By the growth hypothesis \eqref{mandel2} and by the minimality condition \eqref{minimel}, an adaptation of \cite[Theorem 2.4]{Ba} shows that $y^{\ep}(t)$ satisfies the following Euler-Lagrange equation:
\begin{equation}
\label{euler}
 \intom{DW_{el}(\nep y^{\ep}(t)(P^{\ep})^{-1}(t))(\nep y^{\ep}(t)(P^{\ep})^{-1}(t))^T:\nabla \eta (y^{\ep}(t))}=0
\end{equation}
for every $t\in [0,T]$ and for every $\eta\in W^{1,\infty}(\R^3;\R^3)\cap C^{\infty}(\R^3;\R^3)$ such that {$\eta \circ \phi^{\ep}(t,(x',\ep x_3))=0\quad\cal{H}^2\text{ - a.e. on }\Gamma_d$}. 
Hence,
\begin{equation}
\label{eulerstress}
\intom{E^{\ep}(t):\nabla \eta (y^{\ep}(t))}=0
\end{equation}
for every $t\in [0,T]$ and for every $\eta\in W^{1,\infty}(\R^3;\R^3)\cap C^{\infty}(\R^3;\R^3)$ such that {$\eta \circ \phi^{\ep}(t,(x',\ep x_3))=0\quad\cal{H}^2\text{ - a.e. on }\Gamma_d$}. 

Now, fix $t\in [0,T]$ and let $\ep_j$ be the sequence selected in the previous step.
Let {$\eta\in W^{1,\infty}(\R^3;\R^3)\cap C^{\infty}(\R^3;\R^3)$ be such that $\eta=0\quad\cal{H}^2\text{ - a.e. on }\Gamma_d$}. We consider the maps $\eta^{\ep_j}(t)\in W^{1,\infty}(\R^3,\R^3)\cap C^{\infty}(\R^3;\R^3)$ defined as
$$\eta^{\ep_j}(t):=\ep_j \eta\big(\vepj_1(t), \vepj_2(t), \tfrac{1}{\ep_j}\vepj_3(t)\big).$$
It is clear that {$\eta^{\ep_j}(t)\circ \phi^{\ep_j}(t,(x',\ep_j x_3))=0\quad\cal{H}^2\text{ - a.e. on }\Gamma_d$}, hence we can use $\eta^{\ep_j}(t)$ as a test function in \eqref{eulerstress} and we obtain
\begin{equation}
\label{testfctpart}
\intom{E^{\epjt}(t):\nabla\eta^{\ep_j} (y^{\ep_j}(t))}=0
\end{equation}
for every $j$.

Now, set $\xi^{\ep_j}(x)=\big(\vepj_1(t,x), \vepj_2(t,x), \tfrac{1}{\ep_j}\vepj_3(t,x)\big)$ for every $x\in \R^3$. We can rewrite \eqref{testfctpart} as
\begin{eqnarray}
\nonumber &&\sum_{i=1,2,3}\ep_j\intom{E^{\epjt}(t)e_i\cdot\sum_{k=1,2}\partial_k\eta (\xi^{\ep_j}(y^{\ep_j}(t)))\partial_i \xi^{\ep_j}_k(y^{\ep_j}(t))}\\
\nonumber &&+\ep_j\sum_{i=1,2}\intom{E^{\epjt}(t)e_i\cdot\partial_3\eta (\xi^{\ep_j}(y^{\ep_j}(t)))\partial_i \xi^{\ep_j}_3(y^{\ep_j}(t))}\\
\label{eulerconti}&&+\ep_j\intom{E^{\epjt}(t)e_3\cdot \partial_3\eta (\xi^{\ep_j}(y^{\ep_j}(t)))\partial_3 \xi^{\ep_j}_3(y^{\ep_j}(t))}=0.
\end{eqnarray}
Since $\eta\in W^{1,\infty}(\R^3,\R^3)$ and $E^{\epjt}(t)$ is uniformly bounded in $L^1(\Omega;\mthree)$ by \eqref{unifintegnt}, estimate \eqref{bddinverse} yields that the term in the first row of \eqref{eulerconti} converges to zero. By \eqref{gradinv1}, the term in the second row of \eqref{eulerconti} can be bounded as follows:
\begin{eqnarray*}
&&\Big|\ep_j\sum_{i=1,2}\intom{E^{\epjt}(t)e_i\cdot\partial_3\eta (\xi^{\ep_j}(y^{\ep_j}(t)))\partial_i \xi^{\ep_j}_3(y^{\ep_j}(t))}\Big|\leq C\ep_j^{\alpha-2}\|E^{\epjt}(t)e_i\|_{L^1(\Omega;\R^3)}
\end{eqnarray*}
and hence converges to zero due to \eqref{unifintegnt}.
By \eqref{gradinv1}, there holds
\begin{eqnarray}
\nonumber &&\Big|\ep_j\intom{E^{\epjt}(t)e_3\cdot \partial_3\eta (\xi^{\ep_j}(y^{\ep_j}(t)))\partial_3 \xi^{\ep_j}_3(y^{\ep_j}(t))}-\intom{E^{\epjt}(t)e_3\cdot \partial_3\eta (\xi^{\ep_j}(y^{\ep_j}(t)))}\Big|\\
\nonumber &&\leq C{\ep_j}^{\alpha-2}\|E^{\epjt}(t)e_3\|_{L^1(\Omega;\R^3)}.
\end{eqnarray}
which converges to zero, owing to \eqref{unifintegnt}. Therefore, \eqref{eulerconti} yields 
\begin{equation}
\label{remaineuler}
\lim_{\ep_j\to 0}\intom{E^{\epjt}(t)e_3\cdot \partial_3\eta (\xi^{\ep_j}(y^{\ep_j}(t)))}= 0.
\end{equation}
By \eqref{lp1}, \eqref{distinv1} and \eqref{cvitsb} we deduce 
\begin{equation}
\nonumber
\xi^{\ep_j}_k(y^{\ep_j}(t))\to x_k\quad\text{strongly in }L^2(\Omega)\quad\text{for }k=1,2.
\end{equation}
Since $\alpha>3$, by \eqref{distinv13} and \eqref{estimate3} we have $\xi^{\ep_j}_3(y^{\ep_j}(t))\to x_3$ strongly in $L^{2}(\Omega)$. Hence, by the regularity of $\eta$, 
\begin{eqnarray}
\nonumber && \partial_3\eta (\xi^{\ep_j}(y^{\ep_j}(t)))\to \partial_3 \eta (t,x)\quad\text{a.e. in }\Omega\text{ as }\ep_j\to 0.
\end{eqnarray}
By the dominated convergence theorem and by combining \eqref{lp1}, \eqref{badsetstressnt}, \eqref{goodsetstress} and \eqref{remaineuler}, we conclude that
$$\intom{E^*(t)e_3\cdot \partial_3 \eta (t)}=0,$$
for every {$\eta\in W^{1,\infty}(\R^3;\R^3)\cap C^{\infty}(\R^3;\R^3)$ such that $\eta=0\quad\cal{H}^2\text{ - a.e. on }\Gamma_d$}. Hence,
\begin{equation}
\label{3colstress}
E^*(t)e_3=0\quad\text{a.e. in } \Omega.
\end{equation}
By combining \eqref{linearmin}, \eqref{cestar} and \eqref{3colstress} we deduce \eqref{claimstress}. Moreover, by \eqref{linearmin} 
{\begin{equation}
\label{nnpserve}
\sym\,G^*(t)-p(t)=\A(\sym\, \nabla' u(t)-x_3(\nabla')^2 v(t)-p'(t)),\text{ for every }t\in [0,T].
\end{equation}}
{\em{Step 5: Reduced energy balance}}\\
To complete the proof of the theorem it remains to show that the triple $(u(t),v(t),p(t))$ satisfies
\begin{eqnarray}
\nonumber &&\intom{Q_2\big(\sym\nabla' u(t)-x_3(\nabla')^2 v(t)-p'(t)\big)}+\intom{\B{p(t)}}+\cal{D}(p;0,t)\\
\nonumber &&\leq\intom{Q_2\big(\sym\nabla' u(0)-x_3(\nabla')^2 v(0)-p'(0)\big)}+\intom{\B{p(0)}}\\
\nonumber &&+\int_0^t{\intom{\C_2(\sym\,\nabla' u(s)-x_3(\nabla')^2 v(s)-p'(s)):\Big(\begin{array}{cc}\nabla \dot{u}^0(s)-x_3(\nabla')^2 \dot{v}^0(s)&0\\0&0\end{array}\Big)}\,ds}.\\
\label{rin1}
\end{eqnarray}
Once \eqref{rin1} is proved, the opposite inequality in (qs2$_{r\alpha}$) follows by adapting of \cite[Theorem 4.7]{DDM}.

We claim that, to prove \eqref{rin1} it is enough to show that 
\begin{equation}
\label{claimtool21}
\frac{1}{\ep^{\alpha-1}}\sym\Big(\nabla \dot{\pep}(t,\zep(t))(\nabla \pep)^{-1}(t,\zep(t))\Big)\to  \sym\Big(\begin{array}{cc}\nabla' \dot{u^0}(t)-x_3(\nabla')^2 \dot{v}^0(t)&0\\0&0\end{array}\Big) 
\end{equation}
strongly in $L^2(\Omega;\mthree)$, for all $t\in [0,T]$.
Indeed, if \eqref{claimtool21} holds, by \eqref{lp1}, \eqref{badsetstressnt}, \eqref{linftybdwk}, \eqref{goodsetstress} and \eqref{claimstress}, one has
$$\frac{1}{\ep^{\alpha-1}}\intom{E^{\ep}(s):\nabla \dot{\pep}(s,\zep(s))(\nabla \pep)^{-1}(s,\zep(s))}\to \intom{E(s): \sym\Big(\begin{array}{cc}\nabla' \dot{u^0}(s)-x_3(\nabla')^2 \dot{v}^0(s)&0\\0&0\end{array}\Big)},
$$
for every $s\in [0,t]$. Hence, by \eqref{unifbdeep} and the dominated convergence theorem we deduce
\begin{eqnarray}
\nonumber &&
\frac{1}{\ep^{\alpha-1}}\int_0^t{\intom{E^{\ep}(s):\nabla \dot{\pep}(s,\zep(s))(\nabla \pep)^{-1}(s,\zep(s))}\,ds}\\
\label{claimwork1}&&\to\int_0^t{\intom{E(s): \sym\Big(\begin{array}{cc}\nabla \dot{u^0}(s)-x_3\nabla^2 \dot{v}^0(s)&0\\0&0\end{array}\Big)}\,ds}.
\end{eqnarray}
On the other hand, by Theorem \ref{liminfineq} there holds
\begin{eqnarray}
\nonumber &&\intom{Q_2\big(\sym\nabla' u(t)-x_3(\nabla')^2 v(t)-p'(t)\big)}+\intom{\B{p(t)}}\\
\nonumber &&\leq\liminf_{\ep\to 0}\frac{1}{\ep^{2\alpha-2}}\cal{F}_{\ep}(t,\zep(t),P^{\ep}(t)).
\end{eqnarray}
Therefore, once \eqref{claimtool21} is proved, by \eqref{convpept} and \eqref{claimwork1}, passing to the liminf in the $\ep$ energy balance (qs2), inequality \eqref{rin1} follows by \eqref{convE0}.

To prove \eqref{claimtool21}, we first study some properties of the maps $\zep(t)$. By \eqref{fcomp} and \eqref{relzep} there holds
$$\zep_i(t)=y^{\ep}_i(t)-\ep^{\alpha-1}u^0_i(t,\bvep(t,y^{\ep}(t)))+\ep^{\alpha-1}\tep\Big(\frac{\vep_3(t,y^{\ep}(t))}{\ep}\Big)\partial_i v^0(t,\bvep(t,y^{\ep}(t)))$$
for every $t\in [0,T]$, $i=1,2$. Hence, by \eqref{treq3}, \eqref{lp1} and \eqref{cvitsb} we deduce
\begin{equation}
\label{distzepi}
\zep_i(t)\to x_i\quad\text{strongly in }L^2(\Omega)\quad\text{for every }t\in[0,T],\,i=1,2. 
\end{equation}
Moreover, by \eqref{zep3} we have
\begin{eqnarray}
\nonumber
&&\Big\|\frac{\zep_3(t)}{\ep}-x_3-\ep^{\alpha-3}v(t)+\ep^{\alpha-3}v^0(t)\Big\|_{L^2(\Omega)}\leq \Big\|\frac{y^{\ep}_3(t)}{\ep}-x_3-\ep^{\alpha-3}v^{\ep}(t)\Big\|_{L^2(\Omega)}\\
\nonumber &&+\ep^{\alpha-3}\|v^{\ep}(t)-v(t)\|_{L^2(\Omega)}+\ep^{\alpha-3}\|v^0(t)-v^0(t,\bvep(t,y^{\ep}(t)))\|_{L^2(\Omega)}.
\end{eqnarray}
Hence, by \eqref{distinv1}, \eqref{cvvt}, \eqref{estimate3} and \eqref{cvitsb}, 
\begin{equation}
\label{distzep3}
\Big\|\frac{\zep_3(t)}{\ep}-x_3-\ep^{\alpha-3}v(t)+\ep^{\alpha-3}v^0(t)\Big\|_{L^2(\Omega)}\to 0
\end{equation}
 for every $t\in [0,T]$. In particular, by Lemma \ref{cvproptep},
 \begin{equation}
 \label{usfz}
 \tep\Big(\frac{\zep_3(t)}{\ep}\Big)\to x_3\quad\text{strongly in }L^2(\Omega).
 \end{equation}

 Arguing as in the proof of \eqref{l2bdwk}, we perform the decomposition 
 \begin{eqnarray}
\nonumber&& \frac{1}{\ep^{\alpha-1}}\sym\Big(\nabla \dot{\pep}(t,\zep(t))(\nabla \pep)^{-1}(t,\zep(t))\Big)=\frac{1}{\ep^{\alpha-1}}\sym\big(\nabla\dot{\pep}(t,\zep(t))\big)\\
 \label{decompst}&&+\frac{1}{\ep^{\alpha-1}}\sym\Big(\nabla\dot{\pep}(t,\zep(t))\Big((\nabla \pep)^{-1}(t,\zep(t))-Id\Big)\Big).
 \end{eqnarray}
 By \eqref{lp2}, \eqref{gradpept}, \eqref{l2usf2}, \eqref{distzepi} and \eqref{usfz}, we obtain
  \begin{equation}
 \label{term11}
 \frac{1}{\ep^{\alpha-1}}\sym\big(\nabla\dot{\pep}(t,\zep(t))\big)\to \sym\Big(\begin{array}{cc}\nabla \dot{u}^0(t)-x_3\nabla^2 \dot{v}^0(t)&0\\0&0\end{array}\Big)
 \end{equation}
 strongly in $L^2(\Omega;\mthree)$. To study the second term in the right-hand side of \eqref{decompst}, we remark that by \eqref{gradpept} and \eqref{utiledopo}, there holds
 \begin{eqnarray*}
 &&\Big\|\frac{1}{\ep^{\alpha-1}}\sym\Big(\nabla \dot{\pep}(t,\zep(t))(\nabla \pep)^{-1}(t)(\zep(t))-Id)\Big)\Big\|_{L^2(\Omega;\mthree)}\\
 &&\leq C \Big(1+\Big\|\tep\Big(\frac{\zep_3(t)}{\ep}\Big)\Big\|_{L^2(\Omega)}\Big)\|(\nabla \pep(t))^{-1}(\zep(t))-Id\|_{L^{\infty}(\Omega;\mthree)}+C\ep^{\alpha-3}.
 \end{eqnarray*}
 On the other hand, \eqref{lp2}, \eqref{gradinv}, \eqref{gradinv1} and \eqref{estt2bis} yield
 $$\|(\nabla \pep(t))^{-1}(\zep(t))-Id\|_{L^{\infty}(\Omega;\mthree)}\leq C\ep^{\alpha-1}\ell_{\ep}.$$
 Hence, by \eqref{lp1} and \eqref{usfz} we have
 \begin{eqnarray}
 \frac{1}{\ep^{\alpha-1}}\sym\Big(\nabla \dot{\pep}(t,\zep(t))(\nabla \pep)^{-1}(t,\zep(t))-Id)\Big)\to 0
 \label{term11bis}
 \end{eqnarray}
 strongly in $L^2(\Omega;\mthree)$. By combining \eqref{term11} and \eqref{term11bis} we obtain \eqref{claimtool21}. This completes the proof of the theorem.
 \end{proof}
 We give only a sketch of the proof of Theorem \ref{cvstress} in the case $\alpha=3$, as it follows closely that of Theorem \ref{cvstress} for $\alpha>3$. 
 \begin{proof}[Proof of Theorem \ref{cvstress} in the case $\alpha=3$]{\quad}\\
 \emph{Steps 0--3}\\
Steps 0--3 follow as a straightforward adaptation of the corresponding steps in the case $\alpha>3$, 
where now \eqref{sqrtdeco} holds with
\begin{eqnarray*}M(t,x)&:=&\sym\Big(\begin{array}{cc}\nabla' {u}^0(t,x')-x_3(\nabla')^2 {v}^0(t,x')&0\\0&0\end{array}\Big)\\
&+&\frac{1}{2}\Big(\begin{array}{cc}\nabla' v^0(t,x')\otimes \nabla'v^0(t,x')&0\\0&|\nabla' v^0(t,x')|^2 \end{array}\Big)
\end{eqnarray*}
for every $x\in\Omega$ and for all $t\in [0,T]$. The only relevant difference is that we can not conclude that $u(t)$ and $v(t)$ are uniquely determined once $p(t)$ is identified. Hence, now all convergence properties hold on $t$-dependent subsequences.\\
{\em{Step 4: Characterization of the limit stress}}\\
Arguing exactly as in Step 4 of the proof of Theorem \ref{cvstress} for $\alpha>3$, we obtain
 \begin{equation}
\label{almeuler}
\intom{E(t,x)e_3\cdot \partial_3 \eta (t,(x',x_3+v(t,x')-v^0(t,x')))}=0
\end{equation}
for every {$\eta\in W^{1,\infty}(\R^3;\R^3)\cap C^{\infty}(\R^3;\R^3)$ such that $\eta=0\quad\cal{H}^2\text{ - a.e. on }\Gamma_d$}. We consider a sequence $(w_k)\subset C^\infty_c(\omega)$ that converges to $v(t)-v^0(t)$ strongly in $L^2(\omega)$. We take as test functions in \eqref{almeuler} the maps $\eta_k(x):=\eta(x',x_3-w_k(x'))$, where {$\eta\in W^{1,\infty}(\R^3,\R^3)\cap C^{\infty}(\R^3;\R^3)$ and $\eta=0\quad\cal{H}^2\text{ - a.e. on }\Gamma_d$}. We have
$$\intom{E(t,x)e_3\cdot \partial_3 \eta (t,(x',x_3+v(t,x')-v^0(t,x')-w_k(x')))}=0\quad\text{for every }k.$$
Passing to the limit as $k\to +\infty$ in the previous equation, by the dominated convergence theorem we deduce
$$\intom{E(t)e_3\cdot \partial_3 \eta}=0$$
for every {$\eta\in W^{1,\infty}(\R^3,\R^3)\cap C^{\infty}(\R^3;\R^3)$ such that $\eta=0\quad\cal{H}^2\text{ - a.e. on }\Gamma_d$}, which implies $E(t)e_3=0$ a.e. in $\Omega$. Hence, \eqref{linearmin} yields
\begin{equation}
\nonumber
E(t)=\C_2(e(t)),
\end{equation}
and
{\begin{equation}
\label{strlima3}
\sym\,G(t)-p(t)=\A (\sym\nabla' u(t)+\tfrac{1}{2}\nabla' v(t)\otimes\nabla' v(t)-x_3(\nabla')^2 v(t)-p'(t)).
\end{equation}}
{\em{Step 5: Reduced energy balance}}\\
Arguing as in Step 5 of the case $\alpha>3$, to prove (qs2$_{r3}$) it is enough to show that
 \begin{eqnarray}
\nonumber &&\intom{Q_2(e_3(t))}+\intom{\B{p(t)}}+\cal{D}_{H_D}(p;0,t)\\
\nonumber &&\leq\intom{Q_2(e_3(0))}+\intom{\B{p(0)}}\\
\nonumber &&+\int_0^t{\intom{\C_2(e_3(s)):\Big(\begin{array}{cc}\nabla \dot{u}^0(s)+\nabla' v(s)\otimes \nabla' \dot{v}^0(s)-x_3(\nabla')^2 \dot{v}^0(s)&0\\0&0\end{array}\Big)}\,ds},\\
\label{rin13}
\end{eqnarray}
where $t\mapsto e_3(t)$ is the map defined in \eqref{dsigmat}.
Indeed, once \eqref{rin13} is proved, (qs2$_{r3}$) follows by adapting \cite[Theorem 4.7]{DDM} according to Remark \ref{csd}.
To prove \eqref{rin13}, we argue as in \cite[Lemma 5.1]{B} and we set
$$\Theta^{\ep}(t):=\frac{1}{\ep^2}\intom{E^{\ep}(t):\nabla \dot{\pep}(t,\zep(t))(\nabla \pep)^{-1}(t,\zep(t))},$$
$$\Theta(t):=\limsup_{\ep\to 0}\Theta^{\ep}(t)$$
for every $t\in [0,T]$. By \eqref{unifbdeep} (which is still true for $\alpha=3$), $\Theta(t)\in L^1([0,T])$ and by Fatou's lemma there holds
\begin{equation}
\label{limsuptau}
\limsup_{\ep\to 0}\int_0^t{\Theta^{\ep}(s)\,ds}\leq \int_0^t{\Theta(s)\,ds}.
\end{equation}
Now, by Theorem \ref{liminfineq} we know that
\begin{eqnarray*}
\nonumber &&\intom{Q_2(e(t))}+\intom{\B{p(t)}}\leq \liminf_{\ep\to 0}\frac{1}{\ep^{2\alpha-2}}\cal{F}^{\ep}(t,\zep(t), P^{\ep}(t)).
\end{eqnarray*}
By (qs2), \eqref{convE0}, \eqref{convpept} and \eqref{limsuptau} we deduce
\begin{eqnarray*}
\nonumber &&\intom{Q_2(e(t))}+\intom{\B{p(t)}}+\cal{D}_{H_D}(p;0,t)\leq\intom{Q_2(e(0))}+\intom{\B{p(0)}}+\int_0^t{\Theta(s)\,ds}.
\end{eqnarray*}
Hence, to prove \eqref{rin13} it is enough to show that 
\begin{equation}
\label{identtaue}
\Theta(t)=\intom{E(t):\Big(\begin{array}{cc}\nabla \dot{u}^0(t)+\nabla' v(t)\otimes \nabla' \dot{v}^0(t)-x_3(\nabla')^2 \dot{v}^0(t)&0\\0&0\end{array}\Big)}
\end{equation}
for a.e. $t\in [0,T]$.

To this purpose, fix $t\in [0,T]$ and let $\epjt\to 0$ be such that 
$$\Theta(t)=\lim_{\epjt\to 0}\Theta^{\epjt}(t).$$
Up to extracting a further subsequence, we may assume that $\epjt$ is the same subsequence we selected in the previous steps. We claim that
\begin{eqnarray}
\nonumber
&&\frac{1}{\epjt^{2}}\sym\Big(\nabla \dot{\pepjt}(t,\zepjt(t))(\nabla \pepjt)^{-1}(t,\zepjt(t))\Big)\\
\nonumber&&\to  \sym\Big(\begin{array}{cc}\nabla' \dot{u}^0(t)+\nabla' \dot{v}^0(t)\otimes \nabla' {v}^0(t)-(x_3+v(t)-v^0(t))(\nabla')^2 \dot{v}^0(t)&0\\0&\frac{d}{dt}\frac{|\nabla' v^{0}(t)|^2}{2}\end{array}\Big)\\
\label{lhtg} 
\end{eqnarray}
strongly in $L^2(\Omega;\mthree)$. To prove the claim, we perform the decomposition \eqref{decompst}. Now, arguing as in the proof of \eqref{term11}, and using \eqref{distzep3} and Lemma \ref{cvproptep} we obtain
\begin{equation}
\label{term12}
\frac{1}{\epjt^{2}}\sym(\nabla \dot{\pepjt}(t,\zepjt(t)))\to  \sym\Big(\begin{array}{cc}\nabla' \dot{u}^0(t)-(x_3+v(t)-v^0(t))(\nabla')^2 \dot{v}^0(t)&0\\0&0\end{array}\Big) 
\end{equation}
strongly in $L^2(\Omega;\mthree)$. To study the second term in the right-hand side of \eqref{decompst}, we remark that by
\eqref{lp2}, \eqref{gradinv}, \eqref{gradinv1} and \eqref{estt2bis}, one has
\begin{equation}
\nonumber
\Big\|(\nabla \pepjt)^{-1}(t,\zepjt(t))-Id\Big\|_{L^{\infty}(\Omega;\mthree)}\leq C\epjt^{2}\ell_{\epjt}.
\end{equation}
By \eqref{l2usf}, there holds
\begin{eqnarray}
\nonumber
&&\Big\|\Big(\begin{array}{cc}\nabla' \dot{u}^0(t,\bzepjt(t))-\tepjt\big(\frac{\zepjt_3(t)}{\epjt}\big)(\nabla')^2 \dot{v}^0(t,\bzepjt(t))&0\\0&0\end{array}\Big)\Big((\nabla \pepjt)^{-1}(t,\zepjt(t))-Id\Big)\Big\|_{L^2(\Omega;\mthree)}\\
\label{term13}
&&\leq C\epjt^{2}\ell_{\epjt},
\end{eqnarray}
which tends to zero due to \eqref{lp1}. 

By \eqref{gradpept}, it remains only to study the asymptotic behaviour of 
$$\frac{1}{\epjt}\Big(\begin{array}{cc}0&-\dot{\theta}^{\epjt}\big(\frac{\zepjt_3(t)}{\epjt}\big)\nabla' \dot{v}^0(t,\bzepjt(t))\\(\nabla' \dot{v}^0(t,\bzepjt(t)))^T&0\end{array}\Big)\Big((\nabla \pepjt)^{-1}(t,\zepjt(t))-Id\Big).$$
By \eqref{estt2bis}, this is the same as studying the quantity
 $$\frac{1}{\epjt}\Big(\begin{array}{cc}0&-\dot{\theta}^{\epjt}\big(\frac{\zepjt_3(t)}{\epjt}\big)\nabla' \dot{v}^0(t,\bzepjt(t))\\(\nabla' \dot{v}^0(t,\bzepjt(t)))^T&0\end{array}\Big)\Big(\nabla \vepjt(t,y^{\epjt}(t))-Id\Big).$$
 
We claim that
\begin{equation}
\label{fclhp}\frac{1}{\epjt}\Big(\nabla \vepjt(t,y^{\epjt}(t))-Id\Big)\to \Big(\begin{array}{cc}0&\nabla' v^0(t)\\-(\nabla' v^0(t))^T&0\end{array}\Big)
\end{equation}
strongly in $L^2(\Omega;\mthree)$. Indeed, by \eqref{expgr} and \eqref{bddinverse} and the smoothness of $u^0$ and $v^0$,
\begin{eqnarray*}
&&\Big\|\frac{1}{{\epjt}}\Big(\nabla (\vepjt)'(t,y^{\epjt}(t))-\Big(\begin{array}{ccc}1&0&0\\0&1&0\end{array}\Big)\Big)-(0|\nabla' v^0(t))\Big\|_{L^2(\Omega;\M^{2\times 3})}\leq C\epjt\Big\|\tepjt\Big(\frac{\vepjt_3(t,y^{\epjt}(t))}{\epjt}\Big)\Big\|_{L^2(\Omega)}\\
&&+\Big\|\dot{\theta}^{\epjt}\Big(\frac{\vepjt_3(t,y^{\epjt}(t))}{\epjt}\Big)\nabla' v^0(t, (\vep)'(t, y^{\epjt}(t)))\otimes (\nabla \vepjt_3(t, y^{\epjt}(t))-e_3)\Big\|_{L^2(\Omega;\M^{2\times 3})}\\
&&+\Big\|\dot{\theta}^{\epjt}\Big(\frac{\vepjt_3(t,y^{\epjt}(t))}{\epjt}\Big)\nabla' v^0(t, (\vep)'(t, y^{\epjt}(t)))-\nabla' v^0(t)\Big\|_{L^2(\Omega;\R^2)}+C\ep_j.
\end{eqnarray*}
By \eqref{treq2}, \eqref{3comp}, and \eqref{uf3}(which can be proved arguing exactly as in Step 1 of the case $\alpha>3$), we deduce
\begin{equation}
\label{nnpp}
\Big\|\tepjt \Big(\frac{\vepjt_3(t,y^{\epjt}(t))}{\epjt}\Big)\Big\|_{L^2(\Omega)}\leq\Big\|\frac{\vepjt_3(t,y^{\epjt}(t))}{\epjt}\Big\|_{L^2(\Omega)}\leq C\Big(\Big\|\frac{y^{\epjt}_3(t)}{\epjt}\Big\|_{L^2(\Omega)}+\|v^0\|_{L^{\infty}(\omega;\R^2)}\Big)\leq C.
\end{equation}
On the other hand, by \eqref{treq5} and \eqref{gradinv1}
\begin{eqnarray*}
&&\Big\|\dot{\theta}^{\epjt}\Big(\frac{\vepjt_3(t,y^{\epjt}(t))}{\epjt}\Big)\nabla' v^0(t, (\vep)'(t, y^{\epjt}(t)))\otimes (\nabla \vepjt_3(t, y^{\epjt}(t))-e_3)\Big\|_{L^2(\Omega;\M^{2\times 3})}\\
&&\leq C\|\nabla \vepjt_3(t,y^{\epjt}(t))-e_3\|_{L^{\infty}(\Omega;\R^3)}\leq C\epjt.
\end{eqnarray*}
Finally, by \eqref{nnpp} and Lemma \ref{cvproptep}
\begin{eqnarray*}
&&\Big\|\dot{\theta}^{\epjt}\Big(\frac{\vepjt_3(t,y^{\epjt}(t))}{\epjt}\Big)\nabla' v^0(t, (\vep)'(t, y^{\epjt}(t)))-\nabla' v^0(t)\Big\|_{L^2(\Omega;\R^2)}\\
&&\leq C \Big\|\dot{\theta}^{\epjt}\Big(\frac{\vepjt_3(t,y^{\epjt}(t))}{\epjt}\Big)-1\Big\|_{L^2(\Omega)}+\|\nabla' v^0(t, (\vep)'(t, y^{\epjt}(t)))-\nabla' v^0(t)\|_{L^2(\omega;\R^2)}\\
&&\leq \frac{C}{\ell_{\epjt}}+\|\nabla' v^0(t, (\vep)'(t, y^{\epjt}(t)))-\nabla' v^0(t)\|_{L^2(\omega;\R^2)}
\end{eqnarray*}
which converges to zero owing to \eqref{lp1}, \eqref{distinv1}, \eqref{distinv13}, \eqref{cvitsb} (which can be proved arguing exactly as in Step 2 of the case $\alpha>3$) and the dominated convergence theorem. By collecting the previous remarks, we obtain
$$\Big\|\frac{1}{{\epjt}}\Big(\nabla (\vepjt)'(t,y^{\epjt}(t))-\Big(\begin{array}{ccc}1&0&0\\0&1&0\end{array}\Big)\Big)-(0|\nabla' v^0(t))\Big\|_{L^2(\Omega;\mthree)}\to 0.$$
On the other hand, by \eqref{gradinv0} there holds
\begin{eqnarray*}
&&\Big\|\frac{\nabla \vepjt_3(t,y^{\epjt}(t))-e_3}{\epjt}+\Big(\begin{array}{c}\nabla'v^0\\0\end{array}\Big)\Big\|_{L^2(\Omega;\R^3)}\leq C\Big\|\nabla (\vepjt)'(t)-\Big(\begin{array}{ccc}1&0&0\\0&1&0\end{array}\Big)\Big\|_{L^{\infty}(\Omega;\M^{2\times 3})}\\
&&+\|\nabla' v^0(t, (\vepjt)'(t, y^{\epjt}(t)))-\nabla' v^0(t)\|_{L^2(\Omega;\R^2)}
\end{eqnarray*}
which tends to zero owing to \eqref{lp1}, \eqref{distinv1}, \eqref{distinv13}, \eqref{gradinv}, \eqref{cvitsb} and the dominated convergence theorem. Therefore, the proof of claim \eqref{fclhp} is completed.

Now, by \eqref{cvitsb}, \eqref{fclhp} and the dominated convergence theorem we conclude that
\begin{eqnarray}
\nonumber &&\frac{1}{\epjt}\Big(\begin{array}{cc}0&-\dot{\theta}^{\epjt}\big(\frac{\zepjt_3(t)}{\epjt}\big)\nabla' \dot{v}^0(t,\bzepjt(t))\\(\nabla' \dot{v}^0(t,\bzepjt(t)))^T&0\end{array}\Big)\Big(\nabla \vepjt(t,y^{\epjt}(t))-Id\Big)\\
&&\label{term15} \to \Big(\begin{array}{cc}\nabla' \dot{v}^0(t)\otimes \nabla' {v}^0(t)&0\\0&\frac{d}{dt}\frac{|\nabla' v^0(t)|^2}{2}\end{array}\Big)
\end{eqnarray}
strongly in $L^2(\Omega;\mthree)$. By combining \eqref{term12}, \eqref{term13} and \eqref{term15} we deduce \eqref{lhtg}. Now, by \eqref{nothird}, \eqref{lp1}, \eqref{badsetstressnt}, \eqref{linftybdwk}, \eqref{goodsetstress} (which still hold true for $\alpha=3$), \eqref{strlima3} and \eqref{lhtg} we obtain
\begin{equation}
\label{A1}
\Theta(t)=\intom{E(t):\Big(\begin{array}{cc}\nabla' \dot{u}^0(t)+\nabla' \dot{v}^0(t)\otimes \nabla' {v}^0(t)-(x_3+v(t)-v^0(t))(\nabla')^2 \dot{v}^0(t)&0\\0&0\end{array}\Big)}.
\end{equation}
On the other hand,
\begin{eqnarray*}
&&\sym(\nabla' \dot{v}^0(t)\otimes \nabla' {v}^0(t)-(v(t)-v^0(t))(\nabla')^2 \dot{v}^0(t))\\
&&=-\sym\,\nabla' \big((v(t)-v^0(t))\nabla' \dot{v}^0(t)\big)+\sym \big(\nabla' v(t)\otimes \nabla'\dot{v}^0(t)\big)
\end{eqnarray*}
and
\begin{equation}
\label{A2}
\intom{\C_2 E(t):\nabla' \big((v(t)-v^0(t))\nabla' \dot{v}^0(t)\big)}=0
\end{equation}
by Remark \ref{eul3}. By combining \eqref{A1} and \eqref{A2}, the proof of \eqref{identtaue} and of the theorem is complete.
\end{proof}

To conclude this section we show some corollaries of Theorem \ref{cvstress}. We first prove that under the hypotheses of the theorem we can deduce convergence of the elastic energy and of the hardening functional. More precisely, the following result holds true.

\begin{cor}
Under the assumptions of Theorem \ref{cvstress}, for $\alpha>3$ for every $t\in [0,T]$, setting $y^{\ep}(t):=\pep(t,\zep(t))$ there holds
\begin{equation}
\nonumber
\lim_{\ep\to 0}\frac{1}{\ep^{2\alpha-2}}\intom{W_{el}(\nep y^{\ep}(t)(P^{\ep})^{-1}(t))}= \intom{Q_2(\sym\,\nabla' u(t)-x_3(\nabla')^2 v(t)-p'(t))},
\end{equation}
and
\begin{equation}
\label{cvs2}
\lim_{\ep\to 0}\frac{1}{\ep^{2\alpha-2}}\intom{W_{hard}(P^{\ep}(t))}= \intom{\B{p(t)}}.
\end{equation}
The analogous result holds true for $\alpha=3$ on the $t$-dependent subsequence $\epjt\to 0$ selected in Theorem \ref{cvstress}.
\end{cor}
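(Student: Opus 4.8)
Here is how I would establish the corollary, in three main movements: obtain \emph{separate} $\liminf$ inequalities for the elastic energy, the hardening, and the dissipation; prove convergence of the \emph{total} rescaled energy by passing to the limit in (qs2); and finally ``unbundle'' the three contributions by a subsequence argument. First I would observe that the a priori estimates of Step~0 in the proof of Theorem~\ref{cvstress} give $\cal{I}(y^{\ep}(t),P^{\ep}(t))\leq C\ep^{2\alpha-2}$ for every $t$, so that at each fixed $t$ the pair $(y^{\ep}(t),P^{\ep}(t))$ satisfies the hypotheses of Theorem~\ref{liminfineq}. Arguing as in the proof of that theorem (equivalently, as in Steps~3--4 of the proof of Theorem~\ref{cvstress}), one writes $W_{el}(\nep y^{\ep}(t)(P^{\ep})^{-1}(t))=W_{el}(Id+\ep^{\alpha-1}F^{\ep}(t))$ with $F^{\ep}(t)\deb G^{*}(t)-p(t)$ weakly in $L^2$, splits $\Omega$ into the set where $\ep^{\alpha-1}|F^{\ep}(t)|$ is small and its complement (of vanishing measure), uses the quadratic expansion \eqref{quadrwel} on the former, and invokes weak lower semicontinuity of $F\mapsto\intom{Q(F)}$ together with \eqref{nnpserve} (resp.\ \eqref{strlima3}), thus obtaining
\[
\liminf_{\ep\to 0}\frac{1}{\ep^{2\alpha-2}}\intom{W_{el}(\nep y^{\ep}(t)(P^{\ep})^{-1}(t))}\ \geq\ \intom{Q_2(e_{\alpha}(t))}.
\]
For the hardening term the argument is even simpler: by \eqref{prh4}, on the set $\{|\ep^{\alpha-1}p^{\ep}(t)|\leq c_h(\delta)\}$ (whose complement is negligible by Chebychev's inequality and \eqref{unifapest}) one has $W_{hard}(P^{\ep}(t))\geq(1-\delta)\ep^{2\alpha-2}\B{p^{\ep}(t)}$; since the corresponding characteristic functions converge to $1$ boundedly in measure and $p^{\ep}(t)\deb p(t)$ by \eqref{convpept}, weak lower semicontinuity of $\intom{\B{\cdot}}$ and the arbitrariness of $\delta$ yield $\liminf_{\ep\to0}\ep^{-(2\alpha-2)}\intom{W_{hard}(P^{\ep}(t))}\geq\intom{\B{p(t)}}$.

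Next I would pass to the limit in the three-dimensional energy balance (qs2). After dividing by $\ep^{2\alpha-2}$, the initial term tends to $\intom{Q_2(e_{\alpha}(0))}+\intom{\B{p(0)}}$ by \eqref{convE0} (using $\cal{F}_{\ep}(0,\zep(0),P^{\ep}(0))=\cal{I}(y^{\ep}_0,P^{\ep}_0)$, $p(0)=\mathring p$, $(u(0),v(0))=(\mathring u,\mathring v)$), and the work integral tends to $\int_0^t\intom{\C_2e_{\alpha}(s):(\,\cdot\,)}\,ds$ by \eqref{claimwork1} (resp.\ by \eqref{identtaue} when $\alpha=3$). Since $t\mapsto(u(t),v(t),p(t))$ is a reduced quasistatic evolution, condition (qs2$_{r\alpha}$) holds as an equality, so the right-hand side of (qs2) divided by $\ep^{2\alpha-2}$ converges exactly to $\intom{Q_2(e_{\alpha}(t))}+\intom{\B{p(t)}}+\cal{D}_{H_D}(p;0,t)$. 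Hence
\[
\lim_{\ep\to0}\Big(\frac{1}{\ep^{2\alpha-2}}\cal{F}_{\ep}(t,\zep(t),P^{\ep}(t))+\frac{1}{\ep^{\alpha-1}}\cal{D}(P^{\ep};0,t)\Big)=\intom{Q_2(e_{\alpha}(t))}+\intom{\B{p(t)}}+\cal{D}_{H_D}(p;0,t).
\]

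Finally I would conclude by an Urysohn-type subsequence argument. The left-hand side above splits as the elastic energy, the hardening, and the scaled dissipation (divided by $\ep^{2\alpha-2}$, resp.\ $\ep^{\alpha-1}$), whose three $\liminf$'s are bounded below by $\intom{Q_2(e_{\alpha}(t))}$, $\intom{\B{p(t)}}$, and $\cal{D}_{H_D}(p;0,t)$ respectively (the last by \eqref{convpept}), and the sum of these three lower bounds equals the limit of the sum just obtained. Extracting from an arbitrary subsequence a further subsequence along which each of the three (bounded) quantities converges, the three limits sum to the total while each dominates its own lower bound; this forces each limit to coincide with its lower bound, and since the subsequence was arbitrary the full sequences converge. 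This gives both stated identities. For $\alpha=3$ the only change is that the whole argument is performed along the $t$-dependent subsequence $\epjt\to0$ fixed in Theorem~\ref{cvstress}, with $e_{\alpha}$ replaced by $e_3$ and \eqref{claimwork1} by \eqref{identtaue}. The only slightly delicate point is the separate $\liminf$ inequality for the elastic energy, but this is already contained in the proof of Theorem~\ref{liminfineq} in \cite{D1}, so no essentially new work is required.
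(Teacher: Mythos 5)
Your proof is correct and follows essentially the same route as the paper's (very terse) argument: obtain lower bounds for the elastic and hardening terms via the $\liminf$ inequalities underlying Theorem~\ref{liminfineq}, combine the three-dimensional balance (qs2) with the reduced balance (qs2$_{r\alpha}$) to obtain convergence of the total rescaled energy, and then squeeze. You are more explicit than the paper — you re-derive the separate elastic and hardening $\liminf$ inequalities (the statement \eqref{liminftot} only gives the combined one, so this detail is worth spelling out as you do) and you make the final Urysohn/subsequence step precise — but there is no difference in strategy or in the key lemmas invoked.
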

 \begin{proof}
 The result follows by combining the liminf inequalities in Theorem \ref{liminfineq}, the $\ep$-energy balance (qs2) and the reduced energy balance (qs1$_{r\alpha}$). 
 \end{proof}
 In particular, we can deduce strong convergence of the sequence of scaled plastic strains by the convergence of the energies.
 \begin{cor}
 \label{sp}
 Under the hypotheses of Theorem \ref{cvstress}, {for $\alpha>3$} there holds
 \begin{equation}
 \label{scpt}
 p^{\ep}(t)\to p(t)\quad\text{strongly in }L^2(\Omega;\mthree)
 \end{equation}
 for every $t\in [0,T]$. { The analogous result holds true for $\alpha=3$ on the $t$-dependent subsequence $\epjt\to 0$ selected in Theorem \ref{cvstress}.}
 \end{cor}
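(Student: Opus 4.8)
The plan is to upgrade the weak convergence \eqref{cvpt} (and, for $\alpha=3$, its $t$-dependent-subsequence counterpart) to strong convergence by proving that the \emph{hardening norm} of $p^{\ep}(t)$ converges. Since $\mathbb{B}$ is symmetric and positive definite, $r\mapsto\big(2\intom{\B{r}}\big)^{1/2}$ is a Hilbert norm on $L^2(\Omega;\mthree)$ equivalent to the usual one, and \eqref{cvpt} says precisely that $p^{\ep}(t)\deb p(t)$ with respect to it. Hence the whole statement reduces to showing
$$\lim_{\ep\to 0}\intom{\B{p^{\ep}(t)}}=\intom{\B{p(t)}}\qquad\text{for every }t\in[0,T]$$
(along the $t$-dependent subsequence when $\alpha=3$): once this is available, expanding the quadratic form and using the symmetry of $\mathbb{B}$ gives $\intom{\B{p^{\ep}(t)-p(t)}}=\intom{\B{p^{\ep}(t)}}-\intom{\mathbb{B}\,p^{\ep}(t):p(t)}+\intom{\B{p(t)}}$, and since $\mathbb{B}\,p(t)\in L^2(\Omega;\mthree)$, the weak convergence $p^{\ep}(t)\deb p(t)$ together with the convergence of the $\mathbb{B}$-energies forces this to tend to $0$; then \eqref{grbelowh} yields \eqref{scpt}.

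For the convergence of the $\mathbb{B}$-energies, the $\liminf$ inequality is immediate from \eqref{cvpt} and the weak $L^2$-lower semicontinuity of $r\mapsto\intom{\B{r}}$. For the $\limsup$ inequality I would start from \eqref{cvs2}, namely $\ep^{2-2\alpha}\intom{W_{hard}(P^{\ep}(t))}\to\intom{\B{p(t)}}$, and the identity $W_{hard}(P^{\ep}(t))=\twh(Id+\ep^{\alpha-1}p^{\ep}(t))$ (legitimate since $P^{\ep}(t)\in K$ a.e.\ by (qs1)). Fix $\delta\in(0,1)$, let $c_h(\delta)$ be the constant in \eqref{prh4}, and split $\Omega$ into $G^{\ep}_{\delta}:=\{|\ep^{\alpha-1}p^{\ep}(t)|<c_h(\delta)\}$ and $B^{\ep}_{\delta}:=\Omega\setminus G^{\ep}_{\delta}$. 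By Chebyshev and the uniform $L^2$-bound on $p^{\ep}(t)$ one has $\mathcal{L}^3(B^{\ep}_{\delta})\le C\,c_h(\delta)^{-2}\ep^{2\alpha-2}\to0$ ($\delta$ fixed), hence $p^{\ep}(t)\chi_{G^{\ep}_{\delta}}\deb p(t)$ weakly in $L^2$ and so $\liminf_{\ep\to0}\int_{G^{\ep}_{\delta}}\B{p^{\ep}(t)}\,dx\ge\intom{\B{p(t)}}$. On $G^{\ep}_{\delta}$, estimate \eqref{prh4} gives $\B{p^{\ep}(t)}\le\frac{1}{(1-\delta)\ep^{2\alpha-2}}W_{hard}(P^{\ep}(t))$, whence $\limsup_{\ep\to0}\int_{G^{\ep}_{\delta}}\B{p^{\ep}(t)}\,dx\le\frac{1}{1-\delta}\intom{\B{p(t)}}$. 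On $B^{\ep}_{\delta}$, where \eqref{prh4} is useless, I use instead the \emph{global} quadratic lower bound \eqref{prh3} together with the upper bound $\B{F}\le C|F|^2$ coming from the boundedness of $\mathbb{B}$, obtaining $\B{p^{\ep}(t)}\le\frac{C}{c_6}\,\ep^{2-2\alpha}W_{hard}(P^{\ep}(t))$ on $B^{\ep}_{\delta}$; integrating and using $W_{hard}\ge(1-\delta)\ep^{2\alpha-2}\B{p^{\ep}(t)}$ on $G^{\ep}_{\delta}$ gives $\int_{B^{\ep}_{\delta}}\B{p^{\ep}(t)}\,dx\le\frac{C}{c_6}\big(\ep^{2-2\alpha}\intom{W_{hard}(P^{\ep}(t))}-(1-\delta)\int_{G^{\ep}_{\delta}}\B{p^{\ep}(t)}\,dx\big)$, so that $\limsup_{\ep\to0}\int_{B^{\ep}_{\delta}}\B{p^{\ep}(t)}\,dx\le\frac{C\delta}{c_6}\intom{\B{p(t)}}$ by \eqref{cvs2}. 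Adding the two contributions and letting $\delta\to0$ yields $\limsup_{\ep\to0}\intom{\B{p^{\ep}(t)}}\le\intom{\B{p(t)}}$, which closes the argument. Finally, the corresponding convergence of the elastic energy in the previous corollary may likewise be invoked if one prefers to phrase the reduction through $Q_2$ instead.

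The hard part will be the set $B^{\ep}_{\delta}$: weak $L^2$-convergence of $(p^{\ep}(t))$ gives no control on a possible concentration of $|p^{\ep}(t)|^2$ there (the values of $p^{\ep}(t)$ are of order $\ep^{-(\alpha-1)}$ on a set of measure $O(\ep^{2\alpha-2})$), and the quadratic approximation \eqref{prh4} of $\twh$ near the identity says nothing at such large plastic strains. What makes the estimate work is that the lower bound \eqref{prh3} is valid for \emph{all} matrices, so that $W_{hard}$ dominates $|p^{\ep}(t)|^2$ everywhere, while the convergence \eqref{cvs2} of the hardening energies forces the part of the hardening energy carried by $B^{\ep}_{\delta}$ to be negligible compared with $\ep^{2\alpha-2}$. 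The case $\alpha=3$ requires no modification other than carrying out all the above along the $t$-dependent subsequence $\epjt\to0$ selected in Theorem \ref{cvstress}, since both \eqref{cvpt} and \eqref{cvs2} hold along it.
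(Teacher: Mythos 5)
Your proof is correct and takes essentially the same route as the paper's: both split $\Omega$ into a good set where the local quadratic expansion \eqref{prh4} of $\twh$ applies (the paper uses the threshold $|p^{\ep}(t)|<c_h(\delta)/\ep$, you use $|\ep^{\alpha-1}p^{\ep}(t)|<c_h(\delta)$, both of vanishing complementary measure) and a bad set controlled by the global quadratic lower bound \eqref{prh3}, and both extract convergence of the $\mathbb{B}$-energy from \eqref{cvs2} together with weak $L^2$ lower semicontinuity. The only cosmetic difference is that the paper concludes by decomposing $\intom{|p^{\ep}(t)-p(t)|^2}$ explicitly into good- and bad-set contributions and expanding $\B{\cdot}$ there, whereas you invoke the Hilbert-space fact that weak convergence plus convergence of the equivalent $\mathbb{B}$-norm implies strong convergence; the underlying estimates are identical.
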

 \begin{proof}
 Fix $\delta>0$ and let $c_h(\delta)$ be the constant in \eqref{prh4}. By \eqref{prh4} there holds
 \begin{equation}
 \label{tay}
 W_{hard}(Id+F)\geq \B{F}-C\delta|F|^2\quad\text{for every }F\in\mthree,\,|F|<c_h(\delta).
 \end{equation}
  
 Fix $t\in [0,T]$ and for every $\ep$ consider the set
 $$S_{\ep}(t):=\Big\{x\in\Omega: |p^{\ep}(t,x)|<\frac{c_h(\delta)}{\ep}\Big\}.$$
 Denoting by $\mu_{\ep}(t)$ the characteristic function of the set $S_{\ep}(t)$, by \eqref{cvpt} and Chebychev inequality,
 \begin{equation}
 \label{iceq0}
 {\mu_{\ep}(t)}\to 1\quad\text{boundedly in measure as }\ep\to 0.
 \end{equation}
 and thus
 \begin{equation}
 \label{wcrset}
 \mu_{\ep}(t)p^{\ep}(t)\deb p(t)\quad\text{weakly in }L^2(\Omega;\mthree).
 \end{equation}
 We remark that in the set $S_{\ep}(t)$ we have $\ep^{\alpha-1}|p^{\ep}(t)|<\ep^{\alpha-2}c_h(\delta).$
 Hence, by \eqref{tay} for $\ep$ small enough there holds
 \begin{eqnarray*}
 \frac{1}{\ep^{2\alpha-2}}W_{hard}(P^{\ep}(t))\geq  \frac{1}{\ep^{2\alpha-2}}\mu_{\ep}(t)W_{hard}(P^{\ep}(t))
 \geq \mu_{\ep}(t)\big(\B{p^{\ep}(t)}-C\delta|p^{\ep}(t)|^2\big).
 \end{eqnarray*}
  In particular, by \eqref{cvpt}, \eqref{cvs2} and the lower semicontinuity of $B$ with respect to weak $L^2$ convergence, we have
 \begin{eqnarray*}
 &&\intom{\B{p(t)}}= \lim_{\ep\to 0}\frac{1}{\ep^{2\alpha-2}}\intom{W_{hard}(P^{\ep}(t))}\geq\limsup_{\ep\to 0}\frac{1}{\ep^{2\alpha-2}}\intom{\mu_{\ep}(t)W_{hard}(P^{\ep}(t))}\\ 
 &&\geq  \limsup_{\ep\to 0}\intom{\mu_{\ep}(t)\B{p^{\ep}(t)}}-C\delta
\geq  \liminf_{\ep\to 0}\intom{\mu_{\ep}(t)\B{p^{\ep}(t)}}-C\delta\geq \intom{\B{p(t)}}-C\delta.
 \end{eqnarray*}
Since $\delta$ is arbitrary, we obtain
 \begin{eqnarray}
&& \label{iceq1}\lim_{\ep\to 0}\intom{\mu_{\ep}(t)\B{p^{\ep}(t)}}= \intom{\B{p(t)}}
\end{eqnarray}
and  by \eqref{cvs2}
\begin{equation}
\label{iceq1bis}
\lim_{\ep\to 0}\frac{1}{\ep^{2\alpha-2}}\intom{(1-\mu_{\ep}(t))W_{hard}(P^{\ep}(t))}=0.
\end{equation}
By \eqref{prh3} and \eqref{iceq1bis} we deduce
\begin{equation}
\label{iceq2}
\lim_{\ep\to 0}\intom{(1-\mu_{\ep}(t))|p^{\ep}(t)|^2}\leq \frac{2}{c_6}\lim_{\ep\to 0}\frac{1}{\ep^{2\alpha-2}}\intom{(1-\mu_{\ep}(t))W_{hard}(P^{\ep}(t))}=0.
\end{equation}
Hence, by \eqref{grbelowh} there holds
\begin{eqnarray}
\nonumber \intom{|p^{\ep}(t)-p(t)|^2}&=&\intom{\mu_{\ep}(t)|p^{\ep}(t)-p(t)|^2}+\intom{(1-\mu_{\ep}(t))|p^{\ep}(t)-p(t)|^2}\\
\nonumber&\leq& \frac{2}{c_6}\intom{\mu_{\ep}(t)\B{p^{\ep}(t)-p(t)}}+ 2\intom{(1-\mu_{\ep}(t))(|p^{\ep}(t)|^2+|p(t)|^2)}.\\
\label{iceq3}
\end{eqnarray}
Recalling the quadratic structure of $B$, the first term in the second row of \eqref{iceq3} can be decomposed as
\begin{eqnarray*}
\frac{2}{c_6}\intom{\mu_{\ep}(t)\B{p^{\ep}(t)-p(t)}}&=&\frac{2}{c_6}\intom{\mu_{\ep}(t)\B{p^{\ep}(t)}}+\frac{2}{c_6}\intom{\mu_{\ep}(t)\B{p(t)}}\\
&-&\frac{4}{c_6}\intom{\mu_{\ep}(t)\mathbb{B}p^{\ep}(t):p(t)}
\end{eqnarray*}
and tends to zero due to \eqref{iceq0}--\eqref{iceq1}.
On the other hand, by \eqref{iceq0} and \eqref{iceq2} 
$$\intom{(1-\mu_{\ep}(t))(|p^{\ep}(t)|^2+|p(t)|^2)}\to 0.$$
By combining the previous results, we deduce \eqref{scpt}.
\end{proof}
Convergence of the energy implies also strong convergence of the in-plane displacements. More precisely, the following result holds true.
\begin{cor}
\label{su}
Under the assumptions of Theorem \ref{cvstress}, for $\alpha>3$, for every $t\in [0,T]$ there holds
\begin{equation}
\label{scut}
u^{\ep}(t)\to u(t)\quad\text{strongly in }W^{1,2}(\omega;\R^2).
\end{equation} 
The same result holds true for $\alpha=3$, on the $t$-dependent subsequence $\epjt\to 0$ selected in Theorem \ref{cvstress}.
\end{cor}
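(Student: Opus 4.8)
The plan is to reduce \eqref{scut} to a strong $L^2$-convergence of the symmetrized in-plane strains and then invoke Korn's second inequality with the partial Dirichlet datum on $\gamma_d$. I fix $t\in[0,T]$; when $\alpha=3$ all the arguments below are to be run along the $t$-dependent subsequence $\epjt$ selected in Theorem \ref{cvstress}, which I suppress from the notation. The ingredients I rely on are: the convergence of the elastic energy $\frac{1}{\ep^{2\alpha-2}}\intom{W_{el}(\nep y^{\ep}(t)(P^{\ep})^{-1}(t))}\to\intom{Q_2(e_{\alpha}(t))}$ established in the preceding corollary; the strong convergence $p^{\ep}(t)\to p(t)$ in $L^2(\Omega;\mthree)$ of Corollary \ref{sp}; and, from the proof of Theorem \ref{cvstress}, the rigidity estimates \eqref{rt2t}--\eqref{rt4t} for $R^{\ep}(t)$, the decompositions \eqref{deffept}--\eqref{decompep}, the truncation sets $O_{\ep}(t)$ with $\mathcal{L}^3(\Omega\setminus O_{\ep}(t))\leq C\ep^{2(\alpha-1-\gamma)}$ from \eqref{measbep}, the weak convergence \eqref{convfep} together with the identification \eqref{nnpserve} (so that $Q(\sym\,G^*(t)-p(t))=Q_2(e_{\alpha}(t))$), and the a priori bound $\|u^{\ep}(t)\|_{W^{1,2}(\omega;\R^2)}\leq C$.

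First I would upgrade to strong convergence the weak limit $\chi_{\ep}(t)\sym F^{\ep}(t)\deb\sym\,G^*(t)-p(t)$ (a consequence of \eqref{convfep} and $\chi_{\ep}(t)\to1$ in measure). On $O_{\ep}(t)$ one has $\ep^{\alpha-1}|F^{\ep}(t)|\leq c_{el_2}\ep^{\gamma}$, so for each $\delta>0$ the quadratic expansion \eqref{quadrwel} yields $\frac{1}{\ep^{2\alpha-2}}W_{el}(Id+\ep^{\alpha-1}F^{\ep}(t))\geq Q(\sym F^{\ep}(t))-\delta|F^{\ep}(t)|^2$ on $O_{\ep}(t)$ for $\ep$ small, while $W_{el}\geq0$ makes the contribution of $\Omega\setminus O_{\ep}(t)$ harmless; integrating, letting $\ep\to0$ and then $\delta\to0$, and matching with the known limit of the elastic energy gives $\limsup_{\ep}\intom{\chi_{\ep}(t)Q(\sym F^{\ep}(t))}\leq\intom{Q(\sym\,G^*(t)-p(t))}$, whereas weak lower semicontinuity of $\int_{\Omega}Q(\sym\,\cdot)$ gives the reverse inequality for the $\liminf$. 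Hence these (via \eqref{growthcondQ}, equivalent to $L^2$) Hilbert norms of $\chi_{\ep}(t)\sym F^{\ep}(t)$ converge, which together with the weak convergence forces $\chi_{\ep}(t)\sym F^{\ep}(t)\to\sym\,G^*(t)-p(t)$ strongly in $L^2(\Omega;\ms)$. Combining this with $\chi_{\ep}(t)p^{\ep}(t)\to p(t)$ strongly (Corollary \ref{sp} plus equiintegrability of $|p^{\ep}(t)|^2$), with the strong $L^2$-convergence $w^{\ep}(t)\to0$ (from $w^{\ep}(t)=\ep^{\alpha-1}(P^{\ep})^{-1}(t)(p^{\ep}(t))^2$, the uniform bound on $\ep^{\alpha-1}p^{\ep}(t)$ and Corollary \ref{sp} via Vitali's theorem), and with the control on $O_{\ep}(t)$ of the remainder $\ep^{\alpha-1}G^{\ep}(t)(w^{\ep}(t)-p^{\ep}(t))$ afforded by the truncation (exactly as in Step 3 of the proof of Theorem \ref{cvstress}), I would conclude $\sym(G^{\ep})'(t)\to\sym\,G'(t)$ strongly in $L^2(\Omega;\mst)$, with $\sym\,G'(t)=\sym\,G_0(t)-x_3(\nabla')^2 v(t)$ the affine-in-$x_3$ limit of \eqref{gaff}.

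Next I would pass from the strain to the displacement. From $\nep y^{\ep}(t)=R^{\ep}(t)(Id+\ep^{\alpha-1}G^{\ep}(t))$ and \eqref{inplane} one gets $\sym\,\nabla' u^{\ep}(t)=\frac{1}{\ep^{\alpha-1}}\sym\big((R^{\ep}(t))'-Id\big)+\intt{\sym\big((R^{\ep}(t)G^{\ep}(t))'\big)}$. Since $R^{\ep}(t)\in SO(3)$ pointwise, $\sym(R^{\ep}(t)-Id)$ is quadratic in $R^{\ep}(t)-Id$, hence of order $\ep^{2(\alpha-2)}$ in $L^2(\omega)$ by \eqref{rt3t}--\eqref{rt4t} and the embedding $W^{1,2}(\omega)\hookrightarrow L^4(\omega)$; for $\alpha>3$ this is $o(\ep^{\alpha-1})$, so the first term vanishes in the limit, and, using in addition the $L^q$-smallness (all finite $q$) of the third row and column of $R^{\ep}(t)-Id$, $\sym(R^{\ep}(t)G^{\ep}(t))'=\sym(G^{\ep})'(t)+o_{\ep}(1)$ in $L^2$. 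Hence $\sym\,\nabla' u^{\ep}(t)\to\intt{\big(\sym\,\nabla' u(t)-x_3(\nabla')^2 v(t)\big)}=\sym\,\nabla' u(t)$ strongly in $L^2(\omega;\mst)$. Finally, $\zep(t)=(x',\ep x_3)$ on $\Gamma_d$ and $\intt{\tep(x_3)}=0$ for $\ep$ small (since $\ell_{\ep}>\tfrac12$ by \eqref{lp2}, so $\tep$ is the identity on $(-\tfrac12,\tfrac12)$), so \eqref{defphiep} gives $u^{\ep}(t)=u^0(t)$ $\mathcal{H}^1$-a.e. on $\gamma_d$; as also $u(t)=u^0(t)$ on $\gamma_d$, the difference $u^{\ep}(t)-u(t)$ vanishes on $\gamma_d$ and Korn's second inequality for maps vanishing on the nonempty relatively open set $\gamma_d\subset\partial\omega$ yields $\|u^{\ep}(t)-u(t)\|_{W^{1,2}(\omega;\R^2)}\leq C\|\sym\,\nabla'(u^{\ep}(t)-u(t))\|_{L^2(\omega;\mst)}\to0$. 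For $\alpha=3$ the same reasoning runs along $\epjt$, with $e_3(t)$ replacing $e_{\alpha}(t)$ and \eqref{strlima3} replacing \eqref{nnpserve}; the one new point is that $\sym(R^{\epjt}(t)-Id)$ is now of order $\epjt^{2\alpha-4}=\epjt^{\alpha-1}$ and contributes to $\sym\,\nabla' u^{\epjt}(t)$, in the limit, a Von K\'arm\'an term that is cancelled by the term $\tfrac12\nabla' v(t)\otimes\nabla' v(t)$ carried by $\sym\,G_0(t)$, the limiting passage being justified by the strong convergence $v^{\epjt}(t)\to v(t)$ in $W^{1,2}(\omega)$; Korn's inequality then concludes as before.

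I expect the main obstacle to be the strong $L^2$-convergence of the strains $\sym(G^{\ep})'(t)$ in the second step: it does not follow from the weak compactness of Theorem \ref{cvstress} alone, it must be extracted by matching the elastic energies, and the delicate part is controlling the various remainder terms and the contribution of the region where $\nep y^{\ep}(t)(P^{\ep})^{-1}(t)$ is far from $SO(3)$ — the strains $G^{\ep}(t)$ being a priori only bounded, not equiintegrable, in $L^2$. This is precisely where the truncation sets $O_{\ep}(t)$ are indispensable, following Step 3 of the proof of Theorem \ref{cvstress} and the technique of Mora and Scardia.
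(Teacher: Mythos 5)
Your overall strategy coincides with the paper's: match the elastic energies on the truncation sets to get strong $L^2$ convergence of the strains, then pass to $\sym\nabla' u^{\ep}(t)$ and close with Korn's inequality on $\gamma_d$ (the paper delegates this last part to \cite[Section 7.2, Proof of Theorem 2]{FJM2}, which is exactly the $R^{\ep}$-decomposition you write down). Your handling of the boundary condition $u^{\ep}(t)=u^0(t)$ on $\gamma_d$, of the quadratic smallness of $\sym(R^{\ep}(t)-Id)$, and of the cancellation of the von K\'arm\'an term when $\alpha=3$ are all correct.

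There is, however, a genuine gap in the middle, and you flag it in your last paragraph without resolving it: how to pass from strong convergence of $\mu_{\ep}(t)\sym F^{\ep}(t)$ to strong convergence of $\sym(G^{\ep})'(t)$ on all of $\Omega$. The truncation argument ``as in Step 3 of the proof of Theorem \ref{cvstress}'' does not do this: Step 3 only gives $L^1$-smallness of the stress on the bad set, which says nothing about the $L^2$-mass of $G^{\ep}(t)$ outside $U_{\ep}(t)$, and $\mathcal{L}^3(\Omega\setminus U_{\ep}(t))\to0$ alone is not enough for a sequence merely bounded in $L^2$. The paper closes this gap with a separate two-stage argument that your proposal omits: (i) from $W_{el}\geq c_1\dist^2(\cdot,SO(3))$ and the identity $\lim\frac{1}{\ep^{2\alpha-2}}\intom{(1-\mu_{\ep}(t))W_{el}}=0$ one first deduces that the \emph{bad-set contribution to the distance function}, $\frac{1}{\ep^{\alpha-1}}(1-\mu_{\ep}(t))\dist(Id+\ep^{\alpha-1}F^{\ep}(t),SO(3))$, vanishes in $L^2$, which combined with the good-set strong convergence gives strong $L^2$ convergence of $\frac{1}{\ep^{\alpha-1}}\dist(Id+\ep^{\alpha-1}F^{\ep}(t),SO(3))$ on all of $\Omega$; (ii) this yields equi-integrability of $\frac{1}{\ep^{2\alpha-2}}\dist^2(Id+\ep^{\alpha-1}F^{\ep}(t),SO(3))$, which, together with the strong convergence $p^{\ep}(t)\to p(t)$ from Corollary \ref{sp} and the relation $Id+\ep^{\alpha-1}G^{\ep}(t)=(Id+\ep^{\alpha-1}F^{\ep}(t))P^{\ep}(t)$, transfers to equi-integrability of $\frac{1}{\ep^{2\alpha-2}}\dist^2(Id+\ep^{\alpha-1}G^{\ep}(t),SO(3))$, and hence, by the argument of \cite[Section 7.2, Proof of Theorem 2]{FJM2}, to equi-integrability of $|G^{\ep}(t)|^2$ and $|F^{\ep}(t)|^2$. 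Only then does $\sym F^{\ep}(t)\to\A(\sym\nabla'u(t)-x_3(\nabla')^2v(t)-p'(t))$, and therefore $\sym G^{\ep}(t)$, converge strongly in $L^2$. Your proposal correctly diagnoses that the lack of a priori equi-integrability of $|G^{\ep}(t)|^2$ is the main obstruction, but it does not supply the distance-function/FJM2 mechanism by which the paper actually produces that equi-integrability, so the claim ``$\sym(G^{\ep})'(t)\to\sym G'(t)$ strongly in $L^2$'' is not established in your outline.
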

\begin{proof}
We prove the corollary for $\alpha>3$. The case where $\alpha=3$ follows by simple adaptations. Fix $t\in [0,T]$ and let $F^{\ep}(t)$ be the map defined in \eqref{deffept}. Fix $\delta>0$ and consider the set
$$U_{\ep}(t):=\Big\{x\in\Omega: |F^{\ep}(t,x)|<\frac{c_{el}(\delta)}{\ep}\Big\},$$
where $c_{el}(\delta)$ is the constant in \eqref{quadrwel}.
In particular, in the set $U_{\ep}(t)$ there holds $\ep^{\alpha-1}|F^{\ep}(t)|\leq \ep^{\alpha-2}c_{el}(\delta)$. Hence, denoting by $\mu_{\ep}(t)$ the characteristic function of $U_{\ep}(t)$, by (H3), \eqref{quadrwel} and \eqref{decompep}, we have
\begin{eqnarray*}
\frac{1}{\ep^{2\alpha-2}}W_{el}(\nep y^{\ep}(t)(P^{\ep})^{-1}(t))=\frac{1}{\ep^{2\alpha-2}}W_{el}(Id+\ep^{\alpha-1}F^{\ep}(t))\geq \mu_{\ep}(t){Q(F^{\ep}(t))}-\mu_{\ep}(t)C\delta|F^{\ep}(t)|^2.
\end{eqnarray*} 
By Chebychev inequality and \eqref{unifbdfep},
\begin{equation}
\label{nnpmeas}
\mu_{\ep}(t)\to 1\quad\text{boundedly in measure,}
\end{equation}
whereas by \eqref{convfep} and \eqref{nnpserve},
\begin{equation}
\label{nnpfep}
\mu_{\ep}(t)\sym\,F^{\ep}(t)\deb \A(\sym \,\nabla' u(t)-x_3(\nabla')^2 v(t)-p'(t))\quad\text{weakly in }L^2(\Omega;\mthree).
\end{equation}
Arguing as in the proof of \eqref{iceq1} we obtain
\begin{eqnarray}
 \label{iceq11}&& \lim_{\ep\to 0}\intom{\mu_{\ep}(t)Q(F^{\ep}(t))}=\intom{Q_2(\sym\,\nabla' u(t)-x_3(\nabla')^2 v(t)-p'(t))}
\end{eqnarray}
 and
 \begin{eqnarray}
\nonumber\lim_{\ep \to 0} \frac{1}{\ep^{2\alpha-2}}\intom{(1-\mu_{\ep}(t))W_{el}(Id+\ep^{\alpha-1}F^{\ep}(t))}= 0.
\end{eqnarray}
By (H4), this implies that
\begin{equation}
\label{iceq12bis}
\lim_{\ep \to 0} \frac{1}{\ep^{2\alpha-2}}\intom{(1-\mu_{\ep}(t))\dist^2(Id+\ep^{\alpha-1}F^{\ep}(t), SO(3))}\to 0.
\end{equation}
On the other hand, \eqref{growthcondQ} and \eqref{nothird} yield
\begin{eqnarray*}
&&\intom{\big|\mu_{\ep}(t)\sym\, F^{\ep}(t)-  \A\big(\nabla' u(t)-x_3(\nabla')^2 v(t)-p'(t)\big)\big|^2}\\
&&\leq \frac{1}{r_{\C}}\intom{Q\big(\mu_{\ep}(t)\sym\, F^{\ep}(t)-  \A\big(\nabla' u(t)-x_3(\nabla')^2 v(t)-p'(t)\big)\big)}\\
&&=\frac{1}{r_{\C}}\intom{Q(\mu_{\ep}(t) F^{\ep}(t))}+\frac{1}{r_{\C}}\intom{Q_2(\nabla' u(t)-x_3(\nabla')^2 v(t)-p'(t))}\\
&&-\frac{2}{r_{\C}}\intom{\mu_{\ep}(t)\C_2F^{\ep}(t):(\nabla' u(t)-x_3(\nabla')^2 v(t)-p'(t))}.
\end{eqnarray*}
Hence, by \eqref{nnpfep} and \eqref{iceq11}
\begin{equation}
\label{iceq14}
\mu_{\ep}(t)\sym\, F^{\ep}(t)\to  \A(\nabla' u(t)-x_3(\nabla')^2 v(t)-p'(t))\quad\text{strongly in }L^2(\Omega;\mthree).
\end{equation}
 Moreover,
\begin{eqnarray}
&&\nonumber\frac{1}{\ep^{\alpha-1}}\mu_{\ep}(t) \dist(Id+\ep^{\alpha-1}F^{\ep}(t), SO(3))\\
&&\nonumber=\mu_{\ep}(t)|\sym\, F^{\ep}(t)|+\mu_{\ep}(t)O(\ep^{\alpha-1}|F^{\ep}(t)|^2)\to | \A(\nabla' u(t)-x_3(\nabla')^2 v(t)-p'(t))|\\
\label{iceq13}
\end{eqnarray}
strongly in $L^2(\Omega)$. By combining \eqref{iceq12bis} and \eqref{iceq13} we deduce
$$\frac{1}{\ep^{\alpha-1}}\dist(Id+\ep^{\alpha-1}F^{\ep}(t), SO(3))\to | \A(\nabla' u(t)-x_3(\nabla')^2 v(t)-p'(t))|$$
strongly in $L^2(\Omega)$. In particular, the sequence $\frac{1}{\ep^{2\alpha-2}}\dist^2(Id+\ep^{\alpha-1}F^{\ep}(t), SO(3))$ is equi-integrable.

Now, recalling that by \eqref{deffept} there holds
$$Id+\ep^{\alpha-1}F^{\ep}(t)=(Id+\ep^{\alpha-1}G^{\ep}(t))(Id+\ep^{\alpha-1}p^{\ep}(t))^{-1},$$
by \eqref{prk1} and \eqref{isink} for every $R\in SO(3)$ we deduce
\begin{eqnarray*}
&&\frac{1}{\ep^{2\alpha-2}}|Id+\ep^{\alpha-1}G^{\ep}(t)-R|^2=\frac{1}{\ep^{2\alpha-2}}|(Id+\ep^{\alpha-1}F^{\ep}(t))(Id+\ep^{\alpha-1}p^{\ep}(t))-R|^2\\
&&\leq \frac{c_k^2}{\ep^{2\alpha-2}}|Id+\ep^{\alpha-1}F^{\ep}(t)-R|^2+|p^{\ep}(t)|^2,
\end{eqnarray*}
which in turn implies
\begin{eqnarray*}
\frac{1}{\ep^{2\alpha-2}}\dist^2(Id+\ep^{\alpha-1}G^{\ep}(t), SO(3))\leq \frac{c_k^2}{\ep^{2\alpha-2}}\dist^2(Id+\ep^{\alpha-1}F^{\ep}(t), SO(3))+|p^{\ep}(t)|^2.
\end{eqnarray*}
Hence, by \eqref{scpt} $\frac{1}{\ep^{2\alpha-2}}\dist^2(Id+\ep^{\alpha-1}G^{\ep}(t), SO(3))$ is equi-integrable. Arguing as in \cite[Section 7.2, Proof of Theorem 2]{FJM2} we obtain the equi-integrability of $|G^{\ep}(t)|^2$. 

We claim that also $|F^{\ep}(t)|^2$ is equi-integrable. Indeed, by \eqref{deffept}, there holds
$$|F^{\ep}(t)|^2\leq C(|G^{\ep}(t)|^2+|w^{\ep}(t)|^2+|p^{\ep}(t)|^2+\ep^{2\alpha-2}|G^{\ep}(t)w^{\ep}(t)|^2+\ep^{2\alpha-2}|G^{\ep}(t)p^{\ep}(t)|^2).$$
Now, by \eqref{unifapest}, \eqref{isink} and \eqref{numerarepag35}, we have 
$$|w^{\ep}(t)|^2\leq c_K^2\ep^{2\alpha-2}|p^{\ep}(t)|^4\leq C|p^{\ep}(t)|^2.$$
Hence, by \eqref{scpt} the maps $|w^{\ep}(t)|^2$ are equi-integrable. Moreover, by \eqref{unifapest} there holds
$$\ep^{2\alpha-2}|G^{\ep}(t)p^{\ep}(t)|^2\leq C|G^{\ep}(t)|^2$$
and by \eqref{linftybdw}
$$\ep^{2\alpha-2}|G^{\ep}(t)w^{\ep}(t)|^2\leq C|G^{\ep}(t)|^2.$$
Therefore, the equi-integrability of $|F^{\ep}(t)|^2$ follows from the equi-integrability of $|G^{\ep}(t)|^2$. 

By \eqref{iceq14}, this implies that
$$\sym\, F^{\ep}(t)\to \A(\nabla' u(t)-x_3(\nabla')^2 v(t)-p'(t))$$
strongly in $L^2(\Omega;\mthree)$. On the other hand, by \eqref{numerarepag352} and \eqref{numerarepag39},
$$w^{\ep}(t)-\ep^{\alpha-1}G^{\ep}(t)(p^{\ep}(t)-w^{\ep}(t))\to 0$$
strongly in $L^1(\Omega;\mthree)$. Therefore, by \eqref{deffept} and \eqref{scpt} we obtain
{\begin{eqnarray*}
\sym\, G^{\ep}(t)\to \A(\nabla' u(t)-x_3(\nabla')^2 v(t)-p'(t))+p(t)\quad\text{strongly in }L^1(\Omega;\mthree).
\end{eqnarray*}}
By the equi-integrability of $|G^{\ep}(t)|^2$, it follows that {$$\sym\, G^{\ep}(t)\to \A(\nabla' u(t)-x_3(\nabla')^2 v(t)-p'(t))+p(t)\quad\text{strongly in }L^2(\Omega;\mthree).$$}
The conclusion follows then arguing as in \cite[Section 7.2, Proof of Theorem 2]{FJM2}.
\end{proof}
\section{Convergence of approximate minimizers}
\label{appr}
Theorems \ref{cvstress} is actually only  a convergence result. Indeed, under our assumptions the existence of an $\ep$-quasistatic evolution according to Definition \ref{epquasevol} is not guaranteed. Howewer, following the same approach as in \cite[Theorem 2.3]{MS}, we can extend our convergence result to sequences of approximate discrete-time ${\ep}$-quasistatic evolutions. More precisely, setting 
{\begin{eqnarray*}
\cal{A}_{\ep}&:=&\{(z,P)\in W^{1,2}(\Omega;\R^3)\times L^2(\Omega;SL(3)):\\
&&z=(x',\ep x_3)\quad\cal{H}^2\text{ - a.e. on }\Gamma_d\quad\text{ and }P(x)\in K\quad\text{a.e. in }\Omega\},
\end{eqnarray*}}  
we give the following definition.
\begin{defin}
\label{apmin}
Given a sequence of time-partitions
$$\{0=t^0_{\ep}<t^1_{\ep}<\cdots t^{N^{\ep}}_{\ep}=T\},$$
with time-steps
\begin{equation}
\label{ss49}\tau_{\ep}:=\max_{i=1,\cdots N^{\ep}}(t^{i}_{\ep}-t^{i-1}_{\ep})\to 0\quad\text{as }\ep\to 0,
\end{equation}
and a sequence of positive parameters $\delta_{\ep}\to 0$, we call $\{(z^i_{\ep}, P^i_{\ep})\}$ a sequence of \emph{approximate minimizers} if, for every $\ep>0$, $(z^0_{\ep}, P^0_{\ep})\in\cal{A}_{\ep}$, and $(z^i_{\ep},P^i_{\ep})\in \cal{A}_{\ep}$ satisfies
\begin{eqnarray}
\nonumber&&\cal{F}_{\ep}(t^i_{\ep},z^i_{\ep},P^i_{\ep})+{\ep^{\alpha-1}}\intom{D(P^{i-1}_{\ep},P^i_{\ep})}\\
\label{mind}
&&\leq \ep^{2\alpha-2}\delta_{\ep}(t^i_{\ep}-t^{i-1}_{\ep})+\inf_{(z,P)\in\cal{A}_{\ep}}\Big\{\cal{F}_{\ep}(t^i_{\ep},z,P)+{\ep^{\alpha-1}}\intom{D(P^{i-1}_{\ep},P)}\Big\}
\end{eqnarray}
for every $i=1,\cdots, N^{\ep}$.

\end{defin}
Our final result is to show that every sequence of approximate minimizers converges, as $\ep\to 0$, to a reduced quasistatic evolution.
{
\begin{teo}
\label{cvapp}
Let $\alpha\geq 3$. Assume that $t\mapsto u^0(t)$ belongs to $C^1([0,T];W^{1,\infty}(\R^2;\R^2)\cap C^{1}(\R^2;\R^2))$ and $t\mapsto v^0(t)$ belongs to $C^1([0,T];W^{2,\infty}(\R^2)\cap C^{2}(\R^2))$, respectively. For every $t\in [0,T]$, let $\pep(t)$ be defined as in \eqref{defphiep} and let $(\mathring{u},\mathring{v},\mathring{p})\in \cal{A}(u^0(0),v^0(0))$ be such that 
\begin{eqnarray}
\nonumber &&\intom{Q_2(\sym\nabla' \mathring{u}-x_3(\nabla')^2 \mathring{v}+\tfrac{L_{\alpha}}{2}\nabla' \mathring{v}\otimes \nabla' \mathring{v}-\mathring{p}')}+\intom{\B{\mathring{p}}}\\
\nonumber&&
\leq \intomm{Q_2(\nabla' \hat{u}-x_3(\nabla')^2 \hat{v}+\tfrac{L_{\alpha}}{2}\nabla' \hat{v}\otimes \nabla' \hat{v}-\hat{p}')}+\intom{\B{\hat{p}}}+\intom{H_D(\hat{p}-\mathring{p})},\\
\label{servedopo1}
\end{eqnarray} 
for every $(\hat{u},\hat{v},\hat{p})\in\cal{A}(u^0(0),v^0(0))$. 
Given a sequence of time-partitions
$$\{0=t^0_{\ep}<t^1_{\ep}<\cdots t^{N^{\ep}}_{\ep}=T\},$$
with time-steps
\begin{equation}
\nonumber \tau_{\ep}:=\max_{i=1,\cdots N^{\ep}}(t^{i}_{\ep}-t^{i-1}_{\ep})\to 0\quad\text{as }\ep\to 0,
\end{equation}
and a sequence of positive parameters $\delta_{\ep}\to 0$, assume there exists a sequence of pairs $(y_0^{\ep},P_0^{\ep})\in \cal{A}_{\ep}(\pep(0))$ such that
\begin{equation}
\label{appminimg}\cal{I}(y^{\ep}_0,P^{\ep}_0) \leq \cal{I}(\hat{y},\hat{P})+{\ep^{\alpha-1}}\intom{D(P^{\ep}_0,\hat{P})}+\delta_{\ep}\tau_{\ep}\ep^{2\alpha-2},
\end{equation}
for every $(\hat{y},\hat{P})\in\cal{A}_{\ep}(\pep(0))$, and
\begin{eqnarray}
\label{convu0d} && u^{\ep}_0:=\frac{1}{\ep^{\alpha-1}}\intt{\big((y^{\ep}_0)'-x'\big)}\to \mathring{u}\quad\text{strongly in }W^{1,2}(\omega;\R^2),\\
\label{convv0d} && v^{\ep}_0:=\frac{1}{\ep^{\alpha-2}}\intt{(y^{\ep}_0)_3}\to \mathring{v}\quad\text{strongly in }W^{1,2}(\omega),\\
\label{convP0d} && p^{\ep}_0:=\frac{P^{\ep}_0-Id}{\ep^{\alpha-1}}\to \mathring{p}\quad\text{strongly in }L^2(\Omega;\md),\\
 \nonumber && \lim_{\ep\to 0}\,\frac{1}{\ep^{2\alpha-2}}\cal{I}(y^{\ep}_0, P^{\ep}_0) =\intom{Q_2(\sym \nabla' \mathring{u}-x_3(\nabla')^2 \mathring{v}+\tfrac{L_{\alpha}}{2}\nabla' \mathring{v}\otimes\nabla' \mathring{v}-{\mathring{p}}')}\\
\label{convE0d} &&+\intom{\B{\mathring{p}}}.
\end{eqnarray}

Let $(z^i_{\ep}, P^{i}_{\ep})$ be a sequence of approximate minimizers and let $(\overline{z}^{\ep}(t), \overline{P}^{\ep}(t))$ be the corresponding right-continuous, piecewise constant interpolants on the time partitions. Let $\overline{\phi}^{\ep}(t)$ be the associated interpolant of $t\mapsto\pep(t)$. Then, for every $t\in [0,T]$ 
\begin{equation}
\nonumber
\overline{p}^{\ep}(t):=\frac{\overline{P}^{\ep}(t)-Id}{\ep^{\alpha-1}}\deb p(t)\quad\text{weakly in }L^2(\Omega;\mthree).
\end{equation}
Moreover, for $\alpha>3$, for every $t\in [0,T]$ the following convergence properties hold true:
\begin{eqnarray*}
&&\overline{u}^{\ep}(t):=\frac{1}{\ep^{\alpha-1}}\intt{\big((\overline{\phi}^{\ep})'(t,\overline{z}^{\ep}(t))-x'\big)}\deb u(t)\quad\text{weakly in }W^{1,2}(\omega;\R^2),\\
&&\overline{v}^{\ep}(t):=\frac{1}{\ep^{\alpha-2}}\intt{\overline{\phi}^{\ep}_3(t, \overline{z}^{\ep}(t))}\to v(t)\quad\text{strongly in }W^{1,2}(\omega),
\end{eqnarray*}
where $t\mapsto (u(t),v(t),p(t))$ is a reduced quasistatic evolution.

For $\alpha=3$, up to extracting a $t$-dependent subsequence $\epjt\to 0$, there holds
  \begin{eqnarray*}
&&\overline{u}^{\epjt}(t):=\frac{1}{\epjt^{\alpha-1}}\intt{\big((\overline{\phi}^{\epjt})'(t,\overline{z}^{\epjt}(t))-x'\big)}\deb u(t)\quad\text{weakly in }W^{1,2}(\omega;\R^2),\\
&&\overline{v}^{\epjt}(t):=\frac{1}{\epjt^{\alpha-2}}\intt{\overline{\phi}^{\epjt}_3(t, \overline{z}^{\epjt}(t))}\to v(t)\quad\text{strongly in }W^{1,2}(\omega),
\end{eqnarray*}
where $t\mapsto (u(t),v(t),p(t))$ is a reduced quasistatic evolution.
\end{teo}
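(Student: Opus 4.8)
The plan is to follow closely the strategy of the proof of Theorem~\ref{cvstress}, replacing at each stage the exact $\ep$-stability and exact $\ep$-energy balance by their approximate counterparts. First I would establish the \emph{a priori estimates}: testing the minimality \eqref{mind} with $(z,P)=((x',\ep x_3),Id)$ at the first node and summing over the time-partition yields, exactly as in Step~0 and Step~1 of the proof of Theorem~\ref{cvstress}, that $\frac{1}{\ep^{\alpha-1}}\|\dist(\nep \overline y^{\ep}(t)(\overline P^{\ep})^{-1}(t),SO(3))\|_{L^2}$, $\|\overline p^{\ep}(t)\|_{L^2(\Omega;\mthree)}$, $\|\ep^{\alpha-1}\overline p^{\ep}(t)\|_{L^{\infty}}$ and the (discrete) dissipation are all uniformly bounded; the only new ingredient is to verify that the $\delta_\ep\tau_\ep\ep^{2\alpha-2}$-error terms, once summed, contribute an additive constant of order $\delta_\ep\,T\,\ep^{2\alpha-2}\to0$ after rescaling, hence do not affect the bounds. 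Using the rigidity estimate (Theorem~\ref{compactbd1}) one obtains the rotations $R^{\ep}(t)$ and the usual decompositions; here the piecewise-constant structure requires caution only in that all estimates must hold uniformly on the partition, which is automatic because the bounds depend on $t$ only through $u^0,v^0$ and their time-derivatives, which are uniformly bounded by assumption.

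Next I would prove the \emph{reduced stability}. By the generalized Helly Selection Principle (\cite[Theorem A.1]{MRS}) applied exactly as in Step~2, one extracts $t\mapsto p(t)$ with $\overline p^{\ep}(t)\deb p(t)$ for every $t$ and the dissipation liminf inequality. For the minimality at fixed $t$, one compares $(t^i_\ep,\overline z^\ep(t),\overline P^\ep(t))$ (with $t^i_\ep$ the partition node with $t\in[t^{i-1}_\ep,t^i_\ep)$) with the joint recovery sequence from Theorem~\ref{mutrecseq}; the extra error $\ep^{2\alpha-2}\delta_\ep(t^i_\ep-t^{i-1}_\ep)$ divided by $\ep^{2\alpha-2}$ tends to zero, and the passage from the datum $\overline\phi^\ep(t^i_\ep)$ to $\overline\phi^\ep(t)$ is controlled by $C\tau_\ep$ via the Lipschitz continuity in time of $\pep$ (arguing as in \cite[Theorem 2.3]{MS}). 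This yields $(u^*(t),v^*(t),p(t))\in\cal A(u^0(t),v^0(t))$ satisfying (qs1$_{r\alpha}$), and for $\alpha>3$ strict convexity of $Q_2$ identifies $u(t),v(t)$ uniquely, so the convergences hold on a $t$-independent subsequence; for $\alpha=3$ only on $t$-dependent subsequences.

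For the \emph{convergence and characterization of the stress} and the \emph{reduced energy balance} I would argue as in Steps~3--5. The key point is that the stress $\overline E^\ep(t):=\frac{1}{\ep^{\alpha-1}}DW_{el}(\cdots)(\cdots)^T$ inherits the same mixed-type compactness (weak $L^2$ on the good set $O_\ep(t)$, negligible $L^1$-contribution on its complement), since this only uses the a priori energy bounds and the growth hypotheses (H1)--(H5), not the exact minimality. The alternative first-order condition of Ball \cite{Ba} still holds for $\overline z^\ep(t)$ because \eqref{mind} implies $\intom{W_{el}(\nep \overline y^\ep(t)(\overline P^\ep)^{-1}(t))}\le\intom{W_{el}(\nep\tilde y(\overline P^\ep)^{-1}(t))}+\ep^{2\alpha-2}\delta_\ep\tau_\ep$, and an inner variation $\overline y^\ep+\lambda\,\eta\circ\overline y^\ep$ produces, after dividing by $\lambda$ and letting $\lambda\to0$, exactly the Euler--Lagrange identity \eqref{euler} (the additive constant being independent of $\lambda$ drops out). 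This gives $E^*(t)e_3=0$ and hence $E^*(t)=\C_2(e_\alpha(t))$. Finally, passing to the liminf in the summed approximate energy inequality and using the liminf inequalities of Theorem~\ref{liminfineq} together with $\sum_i\ep^{2\alpha-2}\delta_\ep(t^i_\ep-t^{i-1}_\ep)=\ep^{2\alpha-2}\delta_\ep T\to0$ yields the reduced energy inequality \eqref{rin1} (resp.\ \eqref{rin13}); the opposite inequality follows as before by adapting \cite[Theorem 4.7]{DDM}, so $t\mapsto(u(t),v(t),p(t))$ is a reduced quasistatic evolution.

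The main obstacle I expect is the bookkeeping of the approximation errors through the time-discretization: one must check that summing the $\delta_\ep\tau_\ep\ep^{2\alpha-2}$-terms over all $N^\ep$ nodes still gives a quantity negligible after rescaling by $\ep^{2\alpha-2}$ (it equals $\delta_\ep T\ep^{2\alpha-2}$, hence harmless), and that replacing the fixed time $t$ by the nearby partition node $t^i_\ep$ in the boundary datum and in the interpolants costs only $O(\tau_\ep)$; this uses the uniform ($t$-independent) Lipschitz estimates on $\pep$ and its inverse established in Remark~\ref{propinverse}, exactly in the spirit of \cite[Theorem 2.3]{MS}. Once these uniformities are in place, every step of the proof of Theorem~\ref{cvstress} transfers verbatim.
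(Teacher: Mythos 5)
There is a genuine gap, and it is at exactly the point the paper singles out as the main new difficulty. You claim that the quasi-minimality $\intom{W_{el}(\nep \overline y^\ep(t)(\overline P^\ep)^{-1}(t))}\le\intom{W_{el}(\nep\tilde y(\overline P^\ep)^{-1}(t))}+\ep^{2\alpha-2}\delta_\ep\tau_\ep$ still yields the Euler--Lagrange identity \eqref{euler} via the inner variation $\tilde y=\overline y^\ep+\lambda\,\eta\circ\overline y^\ep$, ``the additive constant being independent of $\lambda$ drops out.'' It does not: after rearranging and dividing by $\lambda>0$, the inequality becomes
\begin{equation*}
-\frac{\ep^{2\alpha-2}\delta_\ep\tau_\ep}{\lambda}\ \le\ \frac{1}{\lambda}\Big(\intom{W_{el}(\nep\tilde y(\overline P^\ep)^{-1}(t))}-\intom{W_{el}(\nep\overline y^\ep(t)(\overline P^\ep)^{-1}(t))}\Big),
\end{equation*}
and as $\lambda\to0^+$ the left-hand side tends to $-\infty$, so no first-order condition is obtained. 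An approximate minimizer is simply not a critical point, and Ball's alternative stationarity condition \cite[Theorem 2.4]{Ba} is unavailable here. The paper explicitly flags this: \eqref{notenough} does not imply \eqref{euler}.

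The paper's remedy is to take the variation at the \emph{fixed finite scale} $\lambda=\tau_\ep\ep^{\alpha-1}$, which after division by $\tau_\ep\ep^{2\alpha-2}$ turns the error term into $-\delta_\ep\to 0$. This yields only a one-sided inequality $\liminf_{\ep\to 0}\intom{\Phi^\ep(t):\nabla\eta^\ep(\overline y^\ep(t))}\ge 0$, where $\Phi^\ep(t)$ is the $s$-averaged stress along the path of the finite variation. One must then prove the nontrivial convergence statement $\lim_{\ep\to 0}\intom{\Phi^\ep(t):\nabla\eta^\ep(\overline y^\ep(t))}=\intom{E(t)e_3\cdot\partial_3\eta}$ (which requires showing that the averaged stress $\Phi^\ep(t)$ inherits the same mixed weak-$L^2$/negligible-$L^1$ compactness as $\overline E^\ep(t)$, using that $\|\nabla\eta^\ep\|_{L^\infty}\le C$ so the perturbed deformation gradients stay controlled), and finally exploit the freedom to replace $\eta$ by $-\eta$ to upgrade the one-sided inequality to the equality $E(t)e_3=0$. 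This is the genuinely new content of the proof compared with Theorem \ref{cvstress}; without it, Step 4 does not go through. The remaining parts of your outline (a priori bounds absorbing the summed $\delta_\ep\tau_\ep\ep^{2\alpha-2}$ errors, Helly selection, joint recovery sequence, liminf inequalities, and the discrete energy-inequality iteration) are correct and match the paper's strategy.
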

\begin{oss}
The set of admissible data $(\mathring{u},\mathring{v},\mathring{p})$ for Theorem \ref{cvapp} is nonempty. 

Indeed, for every $\ep>0$ let $(y^{\ep}_0,P^{\ep}_0)\in\cal{A}_{\ep}(\pep(0))$ be such that
$$\cal{I}(y^{\ep}_0,P^{\ep}_0)+\ep^{\alpha-1}\intom{D(Id,P^{\ep}_0)}\leq \inf_{(\hat{y},\hat{P})\in\cal{A}_{\ep}(\pep(0))}\Big\{\cal{I}(\hat{y},\hat{P})+\ep^{\alpha-1}\intom{D(Id,\hat{P})}\Big\}+\delta_{\ep}\tau_{\ep}\ep^{2\alpha-2}.$$
Since by \eqref{triang}
$$D(Id,\hat{P})\leq D(Id,P^{\ep}_0)+D(P^{\ep}_0,\hat{P}),$$
we deduce that $(y^{\ep}_0,P^{\ep}_0)$ fulfills \eqref{appminimg}. By the regularity of $\partial\omega$, the set $\gamma_d$ coincides $\cal{H}^1$ - a.e. with its closure in the relative topology of $\partial\omega$, which in turn is a closed (nontrivial) interval in $\partial\omega$.
Hence, by \cite[Theorem 5.1]{D1}, choosing $p^{\ep,0}=p^0=0$ for every $\ep>0$, and $s_{\ep}=\delta_{\ep}\tau_{\ep}\ep^{2\alpha-2}$, we infer the existence of a triple $(\mathring{u},\mathring{v},\mathring{p})\in\cal{A}(u^0(0),v^0(0))$ such that \eqref{servedopo1} is satisfied and \eqref{convu0d}--\eqref{convE0d} hold true.
\end{oss}}
\begin{proof}[Proof of Theorem \ref{cvapp}]
The proof follows along the general lines of the proof of Theorems \ref{cvstress}. We sketch the main steps in the case $\alpha>3$. The case $\alpha=3$ follows by straightforward adaptations.\\
\emph{Quasi-stability condition}\\
By \eqref{triang} the piecewise constant interpolants fullfill
\begin{equation}
\label{quasst}
\cal{F}_{\ep}(t, \overline{z}^{\ep}(t),\overline{P}^{\ep}(t))\leq \cal{F}_{\ep}(t,\hat{z},\hat{P})+{\ep^{\alpha-1}}\intom{D(\overline{P}^{\ep}(t),\hat{P})}+\delta_{\ep}\tau_{\ep}\ep^{2\alpha-2}
\end{equation}
for every $(\hat{z},\hat{P})\in\cal{A}_{\ep}$. The previous inequality will play the role of the $\ep$-stability condition (qs1). \\
\emph{Discrete energy inequality}\\
To adapt the proof of Theorem \ref{cvstress} we shall need an analogous of condition (qs2). To this purpose, we notice that, by \eqref{mind} the following discrete energy inequality holds true
\begin{eqnarray*}
&&\cal{F}_{\ep}(t^i_{\ep},z^i_{\ep},P^i_{\ep})+{\ep^{\alpha-1}}\intom{D(P^{i-1}_{\ep},P^i_{\ep})}\leq \ep^{2\alpha-2}\delta_{\ep}(t^i_{\ep}-t^{i-1}_\ep)+\cal{F}_{\ep}(t^i_{\ep}, z^{i-1}_{\ep},P^{i-1}_{\ep})\\
&&=\ep^{2\alpha-2}\delta_{\ep}(t^i_{\ep}-t^{i-1}_\ep)+\cal{F}_{\ep}(t^{i-1}_{\ep}, z^{i-1}_{\ep},P^{i-1}_{\ep})+\int_{t^{i-1}_{\ep}}^{t^i_{\ep}}{\partial_s \cal{F}_{\ep}(s, z^{i-1}_{\ep}, P^{i-1}_{\ep})\,ds}\\
&&=\ep^{2\alpha-2}\delta_{\ep}(t^i_{\ep}-t^{i-1}_\ep)+\cal{F}_{\ep}(t^{i-1}_{\ep}, z^{i-1}_{\ep},P^{i-1}_{\ep})\\
&&+{\ep^{2\alpha-2}}\int_{t^{i-1}_{\ep}}^{t^i_{\ep}}{\intom{DW_{el}\big(\nabla \pep(s,z^{i-1}_{\ep})\nep z^{i-1}_{\ep}(P^{i-1}_{\ep})^{-1}\big):\nabla \dot{\pep}(s,z^{i-1}_{\ep})\nep z^{i-1}_{\ep}(P^{i-1}_{\ep})^{-1}}\,ds}\\
&&=\ep^{2\alpha-2}\delta_{\ep}(t^i_{\ep}-t^{i-1}_{\ep})+\cal{F}_{\ep}(t^{i-1}_{\ep}, z^{i-1}_{\ep},P^{i-1}_{\ep})\\
&&+{\ep^{\alpha-1}}\int_{t^{i-1}_{\ep}}^{t^i_{\ep}}{\intom{E_{\ep}^{i-1}(s):\nabla \dot{\pep}(s,z^{i-1}_{\ep})(\nabla \pep)^{-1}(s,z^{i-1}_{\ep})}\,ds},
\end{eqnarray*}
where
$$E_{\ep}^{i-1}(s):=\frac{1}{\ep^{\alpha-1}}DW_{el}\big(\nabla \pep(s,z^{i-1}_{\ep})\nep z^{i-1}_{\ep}(P^{i-1}_{\ep})^{-1}\big)\big(\nabla \pep(s,z^{i-1}_{\ep})\nep z^{i-1}_{\ep}(P^{i-1}_{\ep})^{-1}\big)^T$$
for every $s\in [t^{i-1}_{\ep}, t^i_{\ep}]$. 

By iterating the discrete energy inequality, recalling that $\overline{P}^{\ep}(t)$ is locally constant, we obtain
\begin{eqnarray}
&&\nonumber\cal{F}_{\ep}(t,\overline{z}^{\ep}(t),\overline{P}^{\ep}(t))+{\ep^{\alpha-1}}\cal{D}(\overline{P}^{\ep};0,t)\\
&&\nonumber\leq \ep^{2\alpha-2}\delta_{\ep}T +\cal{F}_{\ep}(0, \zep_0, P^{\ep}_0)+{\ep^{\alpha-1}}\int_0^t{\intom{\overline{E}^{\ep}(s):\nabla \dot{\pep}(s,\overline{z}^{\ep}(s))(\nabla \pep)^{-1}(s,\overline{z}^{\ep}(s))}\,ds},\\
\label{notenough}
\end{eqnarray} 
where $\zep_0:=\vep(0,y^{\ep}_0)$ and
$$\overline{E}^{\ep}(s):=\frac{1}{\ep^{\alpha-1}}DW_{el}\big(\nabla \pep(s,\overline{z}^{\ep}(s))\nep\overline{z}^{\ep}(s)(\overline{P}^{\ep})^{-1}(s)\big)\big(\nabla \pep(s,\overline{z}^{\ep}(s))\nep \overline{z}^{\ep}(s)(\overline{P}^{\ep})^{-1}(s)\big)^T$$
for every $s\in [0,t]$.\\
\emph{Proof of the reduced stability condition and energy balance}\\
The reduced stability condition can be deduced as in Step 2 of the proof of Theorem \ref{cvstress}. Moreover, arguing as in the proof of Theorem \ref{cvstress} one can show that $\overline{E}^{\ep}(t)$ converges in the sense of \eqref{badsetstressnt} and \eqref{goodsetstress} to a limit stress $E(t)$ such that
$$E(t)=\C(G(t)-p(t)).$$
The crucial step to deduce the reduced energy balance is to show that $E(t)e_3=0$ a.e. in $\Omega$, that is,
\begin{equation}
\label{minimality}
E(t)=\C_2(G'(t)-p'(t)).
\end{equation}
The main difference with respect to Theorem \ref{cvstress} is that in this case we can not deduce this condition starting from the three-dimensional Euler-Lagrange equations because \eqref{notenough} does not imply \eqref{euler}. 

To cope with this problem, set $\overline{y}^{\ep}(t)=\overline{\phi}^{\ep}(t,\overline{z}^{\ep}(t))$ for every $t\in [0,T]$. Let {$\eta\in W^{1,\infty}(\R^3;\R^3)\cap C^{\infty}(\R^3;\R^3)$ be such that $\eta=0\quad\cal{H}^2\text{ - a.e. on }\Gamma_d$}. We consider variations of the form 
$$\hat{y}=\overline{y}^{\ep}(t)+\tau_{\ep}\ep^{\alpha-1}\eta^{\ep}\circ \overline{y^{\ep}},$$
where $\eta^{\ep}$ is the test function considered in Step 4 of the proof of Theorem \ref{cvstress}. By \eqref{quasst}, taking $\hat{P}=\overline{P}^{\ep}(t)$, we deduce
\begin{eqnarray*}
-\delta_{\ep}&&\leq\frac{1}{\ep^{\alpha-1}}\intom{\frac{W_{el}\Big(\Big(Id+\tau_{\ep}\ep^{\alpha-1}\nabla \eta^{\ep}(\overline{y}^{\ep}(t))\Big)\nep \overline{y}^{\ep}(t)(\overline{P}^{\ep})^{-1}(t)\Big)-W_{el}(\nep \overline{y}^{\ep}(t)(\overline{P}^{\ep})^{-1}(t))}{\tau_{\ep}\ep^{\alpha-1}}}\\
&&=\frac{1}{\ep^{\alpha-1}}\intom{\int_0^1{\frac{d}{ds}\frac{W_{el}\Big(\Big(Id+s\tau_{\ep}\ep^{\alpha-1}\nabla \eta^{\ep}(\overline{y}^{\ep}(t))\Big)\nep\overline{y}^{\ep}(t)(\overline{P}^{\ep})^{-1}(t)\Big)}{ \tau_{\ep}\ep^{\alpha-1}}}\,ds}\\
&&=\intom{\Phi^{\ep}(t):\nabla \eta^{\ep}(\overline{y}^{\ep}(t))},
\end{eqnarray*}
where
$$\Phi^{\ep}(t):=\frac{1}{\ep^{\alpha-1}}\int_0^1{DW_{el}\Big(\Big(Id+s\tau_{\ep}\ep^{\alpha-1}\nabla \eta^{\ep}(\overline{y}^{\ep}(t))\Big)\nep\overline{y}^{\ep}(t)(\overline{P}^{\ep})^{-1}(t)\Big)(\nep\overline{y}^{\ep}(t)(\overline{P}^{\ep})^{-1}(t))^T\,ds}.$$

Since $\overline{P}^{\ep}(t)\in L^2(\Omega;SL(3))$, $\det\,\overline{P}^{\ep}(t)=1$ a.e. in $\Omega$. Moreover, by (H1) and \eqref{quasst} we deduce that $\det\, \nep\overline{y}^{\ep}(t)>0$ a.e. in $\Omega$. On the other hand, since $\|\nabla \eta^{\ep}\|_{L^{\infty}(\Omega;\mthree)}\leq C$ for every $\ep$ (see Step 4 of the proof of Theorem \ref{cvstress} and \eqref{bddinverse}), by \eqref{ss49},
$$\det\,(Id+s\tau_{\ep}\ep^{\alpha-1}\nabla \eta^{\ep}(\overline{y}^{\ep}(t)))>0\quad\text{ for every }s\in [0,1],$$
for $\ep$ small enough. Hence, by combining \eqref{mandel2} and \eqref{lemmams} we deduce that $\Phi^{\ep}(t)$ is well defined for $\ep$ small enough.

Now, there holds
\begin{equation}
\label{star5}
\liminf_{\ep\to 0}\Big\{\intom{\Phi^{\ep}(t):\nabla \eta^{\ep}(\overline{y}^{\ep}(t))}\Big\}\geq 0.
\end{equation}

We claim that
\begin{equation}
\label{claimdisc}
\lim_{\ep\to 0}\intom{\Phi^{\ep}(t):\nabla \eta^{\ep}(\overline{y}^{\ep}(t))}=\intom{E(t)e_3:\partial_3\eta}.
\end{equation}
We note that, once \eqref{claimdisc} is proved, from \eqref{star5} it follows that
$$\intom{E(t)e_3:\partial_3\eta}\geq 0$$
for every {$\eta\in W^{1,\infty}(\R^3;\R^3)\cap C^{\infty}(\R^3;\R^3)$ such that $\eta=0\quad\cal{H}^2\text{ - a.e. on }\Gamma_d$}, hence the proof of \eqref{minimality} is complete. 

To prove \eqref{claimdisc}, it is enough to consider the sets
$$O_{\ep}(t):=\{x:\ep^{\alpha-1-\gamma}|\overline{F}^{\ep}(t)|<1\},$$
where the maps $\overline{F}^{\ep}(t)$ are the piecewise constant interpolants of the maps $F^{\ep}(t)$ defined in \eqref{deffept}. Arguing as in the proof of \eqref{badsetstress} and \eqref{goodsetstress}, one can show that, denoting by $\chi_{\ep}(t)$ the characteristic function of the set $O_{\ep}(t)$, there holds
$$||(1-\chi_{\ep}(t))\Phi^{\ep}(t)||_{L^1(\Omega;\mthree)}\leq C\ep^{\alpha-1}$$
and
$$\chi_{\ep}(t)\Phi^{\ep}(t)\deb E(t)\quad\text{weakly in }L^2(\Omega;\mthree).$$
 Claim \eqref{claimdisc} follows now arguing as in Step 4 of the proof of Theorem \ref{cvstress}.
\end{proof}
\noindent
\textbf{Acknowledgements.}
I warmly thank Maria Giovanna Mora for having proposed to me the study of this problem and for many helpful and stimulating discussions and suggestions.\\
This work was partially supported by MIUR under PRIN 2008. 
\bigskip

\end{document}